\newcommand{\Z}{\mathbb{Z}}
\newcommand{\Li}{\operatorname{Li}}
\newcommand{\Mod}[1]{\ (\text{mod}\ #1)}
\renewcommand{\Mod}[1]{{\ifmmode\text{\rm\ (mod~$#1$)}\else\discretionary{}{}{\hbox{ }}\rm(mod~$#1$)\fi}}
\newtheorem{theorem}{Theorem}[section]
\newtheorem{prop}[theorem]{Proposition}
\newtheorem{lemma}[theorem]{Lemma}
\newtheorem{cor}[theorem]{Corollary}
\newtheorem{example}[theorem]{Example}
\theoremstyle{definition} 
\newtheorem{defn}[theorem]{Definition}
\numberwithin{theorem}{section}
\begin{document}

\title{Asymptotics for the number of directions determined by $[n] \times [n]$ in $\mathbb{F}_p^2$ }
\author{Greg Martin}
\address{Department of Mathematics \\ University of British Columbia \\ Room 121, 1984 Mathematics Road \\ Vancouver, BC, Canada V6T 1Z2}
\email{gerg@math.ubc.ca}
\author{Ethan Patrick White}
\address{Department of Mathematics \\ University of British Columbia \\ Room 121, 1984 Mathematics Road \\ Vancouver, BC, Canada V6T 1Z2}
\email{epwhite@math.ubc.ca}
\author{Chi Hoi Yip}
\address{Department of Mathematics \\ University of British Columbia \\ Room 121, 1984 Mathematics Road \\ Vancouver, BC, Canada V6T 1Z2}
\email{kyleyip@math.ubc.ca}
\subjclass[2020]{11D45, 11D09, 11B30, 11L05.}
\maketitle

\begin{abstract}
Let $p$ be a prime and $n$ a positive integer such that $\sqrt{\frac p2} + 1 \leq n \leq \sqrt{p}$. For any arithmetic progression $A$ of length $n$ in $\mathbb{F}_p$, we establish an asymptotic formula for the number of directions determined by $A \times A \subset \mathbb{F}_p^2$. The key idea is to reduce the problem to counting the number of solutions to the bilinear Diophantine equation $ad+bc=p$ in variables $1\le a,b,c,d\le n$; our asymptotic formula for the number of solutions is of independent interest.
\end{abstract}

\section{Introduction}

\subsection{The number of directions determined by a set of ordered pairs}

Let $F$ be a field, and let $U \subset F^2$ be a finite set of ordered pairs. The set of \emph{directions determined} by~$U$ is defined to be
\begin{equation} \label{directions def}
\mathcal{D}_U =  \left\{ \frac{b-d}{a-c} \colon (a,b), (c,d) \in U,\, (a,b) \neq (c,d) \right\}
\end{equation}
considered as a subset of $F \cup \{ \infty\}$, where $\infty$ is the vertical direction resulting from $a=c$. The theory of directions is well studied, particularly when $F=\mathbb{F}_p$ is a finite field---see for example~\cite{BB, Sz}. One of the most important results in the subject is the following lower bound on the cardinality of $\mathcal{D}_U$, which was proved by R\'edei~\cite{LR} in the case $|U|=p$ and later extended by Sz\H{o}nyi~\cite[Theorem 5.2]{Sz} to any $|U| \leq p$.

\begin{theorem}[Sz\H{o}nyi]\label{Sz}
Let $p$ be a prime, and let $U \subset \mathbb{F}_p^2$ with $1<|U|\leq p$. Then either $U$ is contained in a line, or $U$ determines at least $\frac{|U|+3}{2}$ directions.
\end{theorem}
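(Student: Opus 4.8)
The plan is to prove the theorem by Rédei's polynomial method --- the approach Rédei used for $|U|=p$ and Szőnyi extended to $|U|<p$.

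I would begin with a normalization. If every direction of $\mathbb{P}^1(\mathbb{F}_p)=\mathbb{F}_p\cup\{\infty\}$ is determined by $U$, then $|\mathcal{D}_U|=p+1\ge\frac{|U|+3}{2}$ since $|U|\le p$, and we are done. Otherwise, pick a non-determined direction and apply an invertible $\mathbb{F}_p$-linear map of $\mathbb{F}_p^2$ carrying it to $\infty$; this maps lines to lines and induces a bijection between the two direction sets, so it changes neither the hypothesis nor the conclusion. After this reduction the points of $U$ have pairwise distinct first coordinates, so $U=\{(a,f(a)):a\in S\}$ for some $S\subseteq\mathbb{F}_p$ with $|S|=|U|=:q$ and some $f\colon S\to\mathbb{F}_p$, and $\mathcal{D}_U\subseteq\mathbb{F}_p$. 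Put $\mathcal{N}=\mathbb{F}_p\setminus\mathcal{D}_U$ and $N=|\mathcal{N}|=p-|\mathcal{D}_U|$; the goal becomes $N\le p-\frac{q+3}{2}$.

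Introduce the Rédei polynomial
\[
R(X,Y)=\prod_{a\in S}\bigl(X+aY-f(a)\bigr)=\sum_{j=0}^{q}e_j(Y)\,X^{q-j},
\]
where $e_0=1$ and $e_j(Y)$, the $j$-th elementary symmetric function of the forms $aY-f(a)$, has $\deg_Y e_j\le j$. For fixed $y_0\in\mathbb{F}_p$ the roots of $R(X,y_0)$ are the numbers $f(a)-ay_0$; two of them coincide precisely when $y_0=\frac{f(a)-f(a')}{a-a'}$ for some $a\ne a'$, i.e. precisely when $y_0\in\mathcal{D}_U$. Hence for every $y_0\in\mathcal{N}$ the monic degree-$q$ polynomial $R(X,y_0)$ splits into $q$ distinct linear factors over $\mathbb{F}_p$, so $R(X,y_0)\mid X^p-X$. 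Dividing in $\mathbb{F}_p[Y][X]$, write $X^p-X=Q(X,Y)R(X,Y)+T(X,Y)$ with $\deg_X T<q$; for each $y_0\in\mathcal{N}$ we get $R(X,y_0)\mid T(X,y_0)$, which by degree comparison forces $T(X,y_0)=0$. So every coefficient of $T(X,Y)$, as a polynomial in $Y$, vanishes at all $N$ points of $\mathcal{N}$.

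Now a degree estimate: iterating the reduction $X^q\equiv-\sum_{j\ge1}e_j(Y)X^{q-j}\pmod R$ and tracking $Y$-degrees shows, by a short induction, that the coefficient of $X^i$ in $T(X,Y)$ has $Y$-degree at most $p-i$; this is $<N$ whenever $i>p-N=|\mathcal{D}_U|$, so those coefficients vanish identically. Hence $\deg_X T\le|\mathcal{D}_U|$, and $R(X,Y)$ divides the lacunary polynomial $X^p-X-T(X,Y)$, all of whose terms besides $X^p$ have $X$-degree at most $|\mathcal{D}_U|$. Suppose, toward a contradiction, that $|\mathcal{D}_U|\le\frac{q+1}{2}$. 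Then $R$ --- a squarefree polynomial that splits completely into the $q$ distinct linear factors $X+aY-f(a)$ --- divides a polynomial with a lacuna of length roughly $q/2$ just below degree $q$. Rédei's theory of lacunary polynomials, in the refined form developed by Szőnyi to handle the case $|U|<p$ (where one intersects with $X^p-X$ and argues with greatest common divisors), then forces $R$, equivalently $f$, to be additive/linearized; this means $U$ lies on a line, contradicting the hypothesis. Pushing the same analysis further turns the crude threshold $q/2$ into the exact bound $\frac{q+3}{2}$, which is known to be sharp. The real content of the proof lies precisely here --- in the lacunary-polynomial classification and its strengthening for $|U|<p$, and in converting its algebraic rigidity into the geometric dichotomy; by comparison the normalization, the divisibility by $X^p-X$, and the degree bookkeeping are routine.
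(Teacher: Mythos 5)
This statement is not proved in the paper at all: it is an imported result, attributed to R\'edei for $|U|=p$ and to Sz\H{o}nyi \cite[Theorem 5.2]{Sz} for general $|U|\le p$, so there is no internal argument to compare yours against. Judged on its own terms, your outline correctly reproduces the standard R\'edei-polynomial framework: the normalization sending a non-determined direction to $\infty$ (and the easy case where all $p+1$ directions occur), the observation that $R(X,y_0)$ splits into distinct linear factors exactly when $y_0\notin\mathcal{D}_U$ and hence divides $X^p-X$, the reduction $X^p-X\equiv T(X,Y)\pmod{R}$, and the degree bookkeeping showing the coefficient of $X^i$ in $T$ has $Y$-degree at most $p-i$, so that $R$ divides a lacunary polynomial. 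All of that is sound.

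The genuine gap is at the step you yourself flag as ``the real content'': you invoke ``R\'edei's theory of lacunary polynomials, in the refined form developed by Sz\H{o}nyi'' as a black box to force $f$ to be linear, and then assert that ``pushing the same analysis further'' upgrades the crude threshold near $q/2$ to the exact bound $\frac{|U|+3}{2}$. Neither of these is a proof step; the classification of fully reducible lacunary polynomials over $\mathbb{F}_p$ (and, for $|U|<p$, the gcd-with-$X^p-X$ refinement that handles the fact that $R(X,y_0)$ need not equal $X^p-X$) is precisely where all the difficulty lives, and the passage from ``$R$ divides a polynomial with a long lacuna'' to the sharp constant $\frac{|U|+3}{2}$ requires a careful case analysis on the degree of the lacunary part that you have not supplied. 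One smaller point to make explicit: concluding ``$f$ additive $\Rightarrow$ $U$ lies on a line'' is valid here only because the field is the prime field $\mathbb{F}_p$, where linearized polynomials reduce to $cx$; over $\mathbb{F}_{p^h}$ this implication fails, which is exactly why the general finite-field versions cited in the paper are harder. As written, your text is an accurate road map to the known proof rather than a proof.
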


Di Benedetto, Solymosi, and the second author~\cite[Theorem 1]{DSW} improved Theorem~\ref{Sz} when $U$ has a Cartesian product structure.

\begin{theorem}[Di Benedetto/Solymosi/White]\label{dswT} 
Let $p$ be a prime, and let $A,B\subset \mathbb{F}_p$ be sets each of size at least~$2$ such that $|A||B| < p$. Then the set of points $A\times B\subset \mathbb{F}_p^2$ determines at least $|A||B| - \min\{|A|,|B|\} + 2$ directions.
\end{theorem}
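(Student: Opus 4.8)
The plan is to recast the problem as an additive-combinatorial inequality about a ratio set, settle the base case via Cauchy--Davenport, and then induct on $|A|$.

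First I would pin down $\mathcal{D}_U$ for $U := A \times B$ precisely. Because $|A|,|B|\ge 2$, both the vertical direction $\infty$ and the horizontal direction $0$ are determined. A finite nonzero direction $d$ is determined if and only if there exist distinct grid points $(a,b),(a',b')$ with $b-b' = d(a-a')$; such a pair necessarily has $a\ne a'$, and then also $b\ne b'$. Hence
\[
  \mathcal{D}_U \;=\; \{0,\infty\}\ \sqcup\ Q, \qquad Q \;:=\; \Bigl\{\tfrac{b-b'}{a-a'} : a,a'\in A,\ a\ne a',\ b,b'\in B,\ b\ne b'\Bigr\},
\]
a disjoint union since every element of $Q$ is nonzero. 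Swapping $A$ and $B$ replaces $Q$ by $Q^{-1}=\{q^{-1}:q\in Q\}$, so we may assume $|A|\le|B|$; write $m := |A| \le |B| =: n$. The theorem is then equivalent to the bound $|Q|\ge m(n-1)$. With $D_A := (A-A)\setminus\{0\}$ and $D_B := (B-B)\setminus\{0\}$, both symmetric, we may write $Q = \bigcup_{\varepsilon\in D_A}\varepsilon^{-1}D_B$, a union of $|D_A|$ dilates of $D_B$; moreover $|D_A|\ge 2m-2$ and $|D_B|\ge 2n-2$ by Cauchy--Davenport, since $mn < p$ makes $2m, 2n < p$.

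The base case $m=2$ is immediate: then $D_A=\{\pm\delta\}$ for the unique nonzero difference $\delta$ of $A$, so by symmetry of $D_B$ we get $Q = \delta^{-1}D_B$ and $|Q| = |D_B| = |B-B|-1 \ge 2n-2 = m(n-1)$, with equality exactly when $B$ is an arithmetic progression. For $m\ge 3$ I would induct: pick $a_0\in A$, set $A' := A\setminus\{a_0\}$, and let $Q'$ be the ratio set for $A'\times B$, so $|Q'|\ge (m-1)(n-1)$ by the inductive hypothesis (valid as $|A'| = m-1 \ge 2$ and $|A'|\,|B| < p$). Since $Q = Q'\cup R_{a_0}$, where $R_{a_0} := \bigcup_{a'\in A'}(a_0-a')^{-1}D_B$ collects the ratios from pairs meeting column $a_0$, it suffices to choose $a_0$ with $|R_{a_0}\setminus Q'|\ge n-1$; then $|Q| \ge |Q'| + (n-1) \ge m(n-1)$ and the induction closes.

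The heart of the matter --- and where I expect the real work to lie --- is proving that a good $a_0$ exists, i.e. ruling out the scenario in which for every $a_0$ essentially all of each dilate $(a_0-a')^{-1}D_B$ is already swallowed by $Q'$. Each such dilate has $\ge 2n-2$ elements, so this scenario would force the single set $Q'$ to contain many near-coincident dilates of $B-B$, which should pin down the multiplicative structure of $B-B$ (via a Kneser- or Pl\"unnecke--Ruzsa-type argument, or by bounding the multiplicative energy of $B-B$ against dilates of itself) tightly enough that $B$ must be an arithmetic progression, a case one then checks by hand. Making this rigidity step quantitative and uniform in $m,n,p$ is the main obstacle. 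An attractive alternative I would try first exploits polynomial structure: the R\'edei polynomial of $A\times B$ factors essentially as $\prod_{a\in A}f(X+aY)$ with $f(Z)=\prod_{b\in B}(Z-b)$, a non-determined direction $d$ forces $\prod_{a\in A}f(X+ad)$ to be squarefree, and if too many directions are non-determined the resulting lacunarity of the R\'edei polynomial should again force $A$ or $B$ to be an arithmetic progression, after which the bound is an explicit computation.
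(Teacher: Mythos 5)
This theorem is not proved in the paper under review; it is quoted from~\cite{DSW}, so there is no in-house proof to compare against. Judged on its own terms, your proposal has a genuine gap. The reduction to $|Q|\ge m(n-1)$, the identity $Q=\bigcup_{\varepsilon\in D_A}\varepsilon^{-1}D_B$, and the base case $m=2$ via Cauchy--Davenport are all correct. But the entire content of the theorem is then concentrated in the unproved claim that some $a_0\in A$ satisfies $|R_{a_0}\setminus Q'|\ge n-1$, and you offer only speculative routes (Kneser/Pl\"unnecke--Ruzsa rigidity, lacunarity of the R\'edei polynomial) without executing any of them. That claim is not a technical afterthought: the paper's own discussion notes that the bound is attained by $[3]\times[2n-1]$, and in that example every choice of $a_0$ yields \emph{exactly} $n-1$ new ratios, with zero slack. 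So the inductive step is tight, any proof of it must control the extremal configurations exactly, and the soft counting heuristics you gesture at (``many near-coincident dilates should pin down the structure'') cannot close it without a precise stability statement that you neither formulate nor prove. A further structural worry: $Q'$ itself depends on which $a_0$ you delete, so ``a good $a_0$ exists'' is not a statement about a single fixed set $Q'$, which makes the averaging/rigidity strategy harder than your sketch acknowledges.

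For comparison, the actual argument of Di Benedetto--Solymosi--White is a polynomial-method proof: one shows that a non-determined direction forces the R\'edei-type polynomial $\prod_{a\in A}f(X+aY)$ (with $f(Z)=\prod_{b\in B}(Z-b)$) to have strong divisibility/lacunarity properties, and a degree count then yields the bound directly, with no induction on $|A|$ and no need to classify extremal sets. Your second suggested route points in that direction, but as written it is a one-sentence intention rather than a proof. As it stands, the proposal establishes the theorem only for $\min\{|A|,|B|\}=2$.
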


We remark that the set of directions determined by $A \times A$ in Theorem~\ref{dswT} is the set $(A-A)/(A-A)$. Estimating the size of $(A-A)/(A-A)$ is often a critical step in sum-product and character-sum results over finite fields---see for example~\cite{MPR,SS}.

For positive integers $n$, define $[n] = \{1,2,\ldots,n\}$. The authors of~\cite{DSW} observed that Theorem~\ref{dswT} is tight for long rectangles of the form $[3] \times [2n-1]$, and speculated that Theorem~\ref{dswT} might be improved for Cartesian products of the form $A \times A$. In this work we show that this is indeed the case for $A = [n]$ by determining an asymptotic formula for the number of directions determined by $[n]^2 \subset \mathbb{F}_p^2$, for all primes~$p$.

The statement of our main theorem involves the continuous function
\begin{equation}\label{D def}
D(\lambda) = \begin{cases}
\frac{12}{\pi^2} \lambda^2, & \lambda \in [0,\frac1{\sqrt2}], \\
 \frac{6}{\pi^2} \big( 2\Li_2(\lambda^2) + \log^2(\lambda^2)  - 2(1-\lambda^2)\log(\lambda^{-2}-1) + 2(1-\lambda^2) \big) - 1, & \lambda \in (\frac1{\sqrt2},1), \\
1, & \lambda \geq 1 ,
\end{cases}
\end{equation}
where
\begin{equation}\label{Li}
\Li_2(x) = -\int _{0}^{x}\frac{\log(1-t)}{t}\,dt
\end{equation}
is the {\em dilogarithm function}.
Figure~\ref{ffig} shows the graph of~$D(\lambda)$ as a gold, turquoise, and red (solid and dashed) curve, where each color represents a piece of the piecewise defined function. The purple (dotted) curve is the graph of $y=\lambda^2$, whose significance we will mention momentarily.

\begin{figure}[h!]
  \centering
    \includegraphics[width=0.6\textwidth]{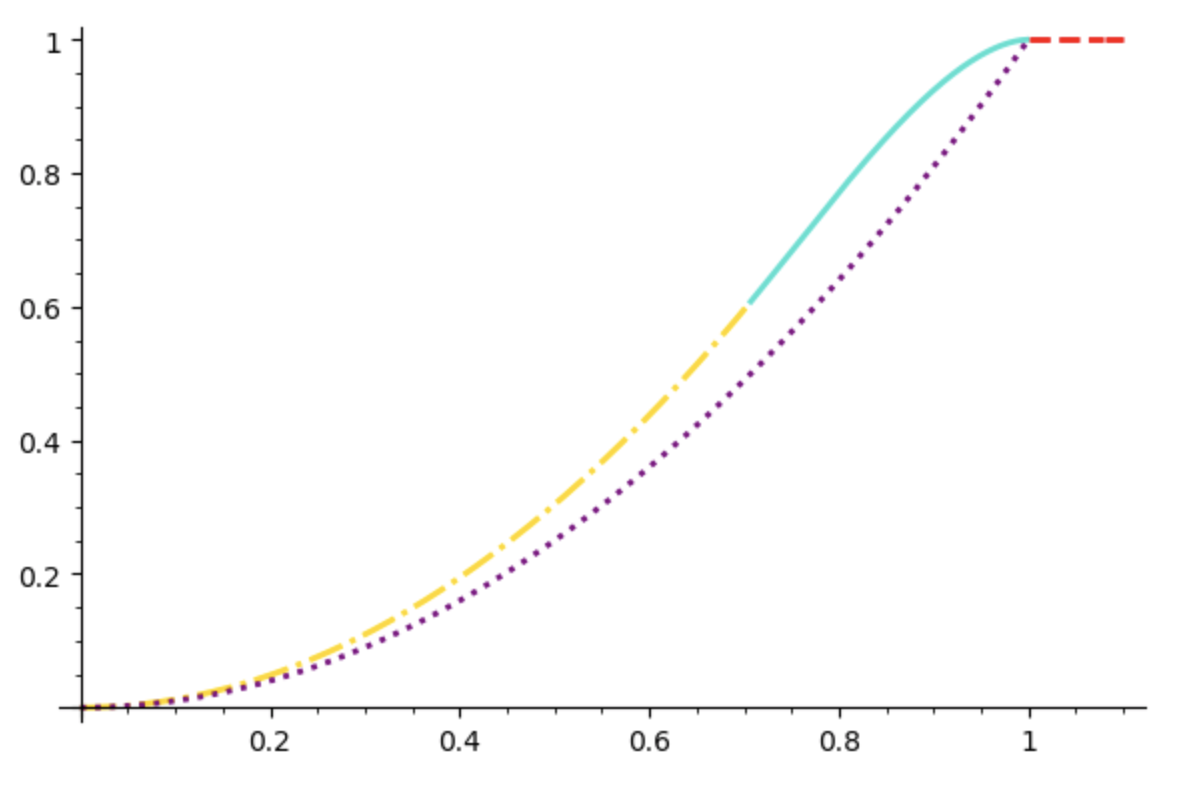}
    \caption{Comparison of $D(\lambda)$ (upper curve) and~$\lambda^2$, the bound from~\cite{DSW} (lower curve). The three textures of the upper curve indicate the three cases of the definition~\eqref{D def} of~$D(\lambda)$.}
    \label{ffig}
\end{figure}

We may now state our main result, an asymptotic formula for the number of directions determined by $[n]^2$, or equivalently by $A^2$ for any arithmetic progression $A$ of length $n$ in $\mathbb{F}_p$ (since all such sets are linearly equivalent and thus yield the same set of directions).

\begin{theorem}\label{maind}
The number of directions determined by $[n]^2 \subset \mathbb{F}_p^2$ is 
\[ D\bigg(\frac n{\sqrt p}\bigg)p + O\big(p^{3/4}(\log p)^{2^{3/2}+1}\big),\]
where the function $D(\lambda)$ is defined in equation~\eqref{D def}.
\end{theorem}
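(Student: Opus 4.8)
\emph{Strategy.} The plan is to reduce Theorem~\ref{maind} to an asymptotic formula for
\[
R=\#\{(a,b,c,d)\in[n-1]^4 : ad+bc=p\},
\]
and then to estimate $R$ by elementary divisor-function manipulations followed by cancellation in exponential sums. I assume $\sqrt{p/2}+1\le n\le\sqrt p$ as in the abstract; outside this range the result is either trivial (for $n\le\sqrt{p/2}$ one has $ad+bc\le 2(n-1)^2<p$, so $R=0$, while $D(n/\sqrt p)=\tfrac{12}{\pi^2}(n/\sqrt p)^2$) or out of scope.

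\emph{Step 1 (reduction).} The directions determined by $[n]^2$ form the set $\{0,\infty\}\cup P\cup(-P)$, where $P=\{\,yx^{-1}\bmod p : 1\le x,y\le n-1\,\}\subseteq\mathbb F_p^{\times}$. Since $n\le\sqrt p$, a congruence $xy'\equiv x'y\pmod p$ with $x,y,x',y'\in[n-1]$ forces $xy'=x'y$ (as $|xy'-x'y|<p$), so distinct reduced fractions $y/x$ with numerator and denominator in $[n-1]$ represent distinct residues; hence $|P|=\Phi(n-1)$, where $\Phi(N)=\#\{(x,y)\in[N]^2:\gcd(x,y)=1\}=\tfrac{6}{\pi^2}N^2+O(N\log N)$. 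Likewise $r\in P\cap(-P)$ iff, writing $r=y/x$ in lowest terms, there are $x',y'\in[n-1]$ with $xy'+x'y=p$ (the bound $2\le xy'+x'y\le 2(n-1)^2<2p$ upgrades the congruence $xy'+x'y\equiv0\pmod p$ to an equality, which in turn forces $\gcd(x,y)=1$). Crucially, such $(x',y')$ is unique: the integer solutions of $xy'+x'y=p$ form an arithmetic progression with common difference $(x,-y)$, the box $[1,n-1]^2$ is convex, and two solutions in the box would include two consecutive ones, say with $x'\le n-1-x$ and $y'\le n-1$, forcing
\[
p=xy'+x'y\le x(n-1)+(n-1-x)y\le(n-1)(x+y)-xy\le(n-1)^2<p,
\]
a contradiction. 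Thus $|P\cap(-P)|=R$ and the number of directions equals
\[
2+2|P|-|P\cap(-P)|=\tfrac{12}{\pi^2}n^2-R+O(n\log n).
\]
It remains to prove $R=\bigl(\tfrac{12}{\pi^2}\lambda^2-D(\lambda)\bigr)p+O\!\bigl(p^{3/4}(\log p)^{2^{3/2}+1}\bigr)$, where $\lambda=n/\sqrt p$.

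\emph{Step 2 (main term of $R$).} Group solutions by $(a,c)$; as $\gcd(a,c)\mid ad+bc=p$ and $a,c<p$, we have $\gcd(a,c)=1$. For such a pair, $ad+bc=p$ forces $d\equiv p\,a^{-1}\pmod c$, and since $a,c\le n-1$ the conditions $b,d\in[n-1]$ reduce (after a short computation, using $c(n-1)+a\le(n-1)n<p$) to $d$ lying in the single interval $\bigl[\,(p-c(n-1))/a,\ n-1\,\bigr]$; the number of admissible $d$ is $\tfrac{(a+c)(n-1)-p}{ac}+O(1)$ when $(a+c)(n-1)\ge p$, and is $0$ otherwise. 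Summing the main part $\tfrac{(a+c)(n-1)-p}{ac}$ over coprime pairs, sieving out coprimality by Möbius inversion, and approximating by an integral via $a=(n-1)u$, $c=(n-1)v$ gives the main term $\tfrac{6}{\pi^2}(n-1)^2\,I\bigl(p/(n-1)^2\bigr)$, where
\[
I(\sigma)=\iint_{\substack{u,v\in(0,1]\\ u+v\ge\sigma}}\frac{u+v-\sigma}{uv}\,du\,dv .
\]
Carrying out the inner $u$-integration and recognizing a dilogarithm in the remaining $v$-integral evaluates $\tfrac{6}{\pi^2}n^2 I(p/n^2)=\bigl(\tfrac{12}{\pi^2}\lambda^2-D(\lambda)\bigr)p$; the regimes $\sigma\ge 2$, $1<\sigma<2$, $\sigma\le 1$ reproduce the three cases of~\eqref{D def} (continuity checks: $I(2)=0$ and $I(1)=2-\pi^2/6$ yield $D(1/\sqrt2)=6/\pi^2$ and $D(1)=1$). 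Combined with Step 1 this produces the asserted main term $D(\lambda)p$.

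\emph{Step 3 (error term, the crux).} What remains are the $O(1)$ per pair from Step 2, together with the Euler--Maclaurin and Möbius-truncation losses; the latter are $O(p^{1/2+o(1)})$, but the former, summed over the $\asymp p$ pairs $(a,c)$ with $(a+c)(n-1)\ge p$, is only $O(p)$ by trivial estimates --- the same size as the main term, hence useless. The proof must therefore exploit cancellation: one expresses these $O(1)$ terms through the sawtooth function $\psi$, opens $\psi$ into a truncated Fourier series, and observes that the residue class $p\,a^{-1}\pmod c$ turns the resulting sums into incomplete Kloosterman sums in $a$ (equivalently, after a reciprocity step, in $c$), so that Weil's bound saves a factor $\asymp c^{1/2}$, up to divisor-function factors, over the trivial estimate. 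Since $a,c\le n\le\sqrt p$, this is a saving of $p^{1/4}$, bringing the error down to $O\bigl(p^{3/4}(\log p)^{O(1)}\bigr)$; tracking the divisor sums and exponential-sum estimates generated by completing the Kloosterman sums, together with the $\sum_h 1/h$ from the Fourier expansion, pins the power of the logarithm at $2^{3/2}+1$. I expect this last step to be the main obstacle: obtaining genuine cancellation in a two-variable sum in precisely the range $a,c\lesssim\sqrt p$ --- where Minkowski-type lattice heuristics and trivial bounds are both too weak --- and doing so uniformly in the auxiliary parameters; extracting the exact exponent of $\log p$ is then a matter of careful bookkeeping rather than a new idea.
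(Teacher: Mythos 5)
Your proposal follows essentially the same route as the paper: the same reduction of the direction count to $\#\{(a,b,c,d)\in[n-1]^4: ad+bc=p\}$ (including the same uniqueness argument for the solution in the box), the same main-term evaluation by counting lattice points in residue classes, sieving coprimality with M\"obius, and comparing to a double integral that produces the dilogarithm, and the same error-term strategy of Fourier-expanding the sawtooth and invoking Weil-type bounds for incomplete Kloosterman sums in the range $c\le\sqrt p$ (the paper implements this via the Erd\H{o}s--Tur\'an inequality and a discrepancy bound, after peeling off a complete period handled exactly by a Bernoulli identity). Step 3 is left as a sketch, but the sketch describes precisely what the paper carries out, including the correct source of the $p^{1/4}$ saving and of the logarithmic factors.
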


It is a simple consequence of the pigeonhole principle that any set $U \subset \mathbb{F}_p^2$ with $|U|>p$ will determine all $p+1$ directions (since in $\mathbb{F}_p^2$ there are exactly $p$ lines of any particular slope). Therefore, estimating the number of directions determined by  $[n]^2 \subset \mathbb{F}_p^2$ is interesting only when $n \leq \sqrt{p}$. On the other hand, if $n$ is sufficiently small relative to $p$, it is easy to see that the number of directions determined by $[n]^2$ is the same in $\mathbb{Q}$ and $\mathbb{F}_p$; Lemma~\ref{toosmall} gives a precise statement to this effect when $n\le \sqrt{\frac p2}$. The interesting range $n \in (\sqrt{\frac p2},\sqrt{p})$, corresponding to the case $\lambda \in (\frac1{\sqrt2},1)$ in the definition of~$D(\lambda)$, is the range in which Theorem~\ref{maind} is nontrivial and new. In particular, previously there was no nontrivial upper bound, while Theorem~\ref{dswT} applied to the case $A=B=[n]$ gave the best known lower bound (depicted by the purple curve in Figure~\ref{ffig}); note that our result actually gives an asymptotic formula in this interesting range.

We remark that the dilogarithm function~$\Li_2$ appears in many different contexts in number theory. In particular, Cilleruelo and Guijarro-Ord\'o\~nez~\cite{CG} showed that the typical size of the ratio set $A/A$ for a random set $A \subset [n]$ also involves the dilogarithm function.

The main ingredient of the proof of Theorem~\ref{maind} is Theorem~\ref{maint} below, which is a purely number-theoretical statement giving an asymptotic formula for the number of solutions to the bilinear Diophantine equation $ad+bc=p$ with the variables in~$[n]$. Our proof could be adapted to obtain an asymptotic formula for the number of directions determined by $[m] \times [n] \subset \mathbb{F}_p^2$, although the details would be more complicated.

Since the difference between the number of directions determined by $[n]^2$ and by $[n+1]^2$ can be greater than~$n$ (when~$n$ is prime, for example), an error term of at least $O(\sqrt{p})$ is unavoidable in Theorem~\ref{maind}. We conjecture that the true size of the error term is $\sqrt{p}$ up to logarithmic factors.
Incidentally, note that the difference between the number of directions determined by $[n]^2$ and by $[n+1]^2$ is also $O(n)$, which gives another way to see that the function $D(\lambda)$ appearing in the asymptotic formula in Theorem~\ref{maind} is Lipschitz continuous.

We now give several consequences of Theorem~\ref{maind}. 
The following theorem, due to Solymosi~\cite[Theorem 4]{TV}, provides an extension of the classical Thue--Vinogradov lemma from elementary number theory. We use the notation $\mathbb{F}_p^* = \mathbb{F}_p \setminus \{0\}$.

\begin{theorem}[Solymosi]\label{TV}
Let $p$ be a prime. For any $\alpha,\beta\in \mathbb{N}$ satisfying $\alpha(\beta+1)\leq p-1$, there are at least
$\alpha(\beta+1)$ distinct elements
$a\in\mathbb{F}_p^*$ for which there exist $x \in [\alpha]$ and $y \in [\beta]$ such that $ax \equiv \pm y \Mod{p}$.
\end{theorem}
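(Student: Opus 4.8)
The plan is to translate the statement into the language of directions determined by a Cartesian product and then appeal to Theorem~\ref{dswT}, reserving a short self-contained pigeonhole argument for the range of parameters in which Theorem~\ref{dswT} is inapplicable. We may assume $\alpha,\beta\ge 1$, since the assertion is vacuous when $\alpha=0$. The first step is the reformulation: an element $a\in\mathbb{F}_p^*$ has the required property exactly when $a\equiv\pm yx^{-1}\pmod{p}$ for some $x\in[\alpha]$ and $y\in[\beta]$ (here $x$ is a unit modulo~$p$ because $1\le x\le\alpha<p$), so the set of admissible $a$ is $G=\{\pm yx^{-1}\bmod p:x\in[\alpha],\,y\in[\beta]\}$. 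On the other hand, the directions determined by $U=[\alpha+1]\times[\beta+1]\subset\mathbb{F}_p^2$ in the sense of~\eqref{directions def} are the ratios $(b-d)/(a-c)$ with $a-c\in\{-\alpha,\dots,\alpha\}$ and $b-d\in\{-\beta,\dots,\beta\}$, not both zero; splitting into cases according to whether one of the two coordinate differences vanishes shows $\mathcal{D}_U=\{0,\infty\}\cup G$, a disjoint union since every element of $G$ is nonzero. Hence $|G|=|\mathcal{D}_U|-2$, and it suffices to prove $|\mathcal{D}_U|\ge\alpha(\beta+1)+2$.

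Since $p$ is prime and $\alpha+1,\beta+1\ge 2$, the product $(\alpha+1)(\beta+1)$ cannot equal $p$, so exactly one of two cases occurs. If $(\alpha+1)(\beta+1)<p$, then Theorem~\ref{dswT} applied with $A=[\alpha+1]$ and $B=[\beta+1]$ yields $|\mathcal{D}_U|\ge(\alpha+1)(\beta+1)-\min\{\alpha+1,\beta+1\}+2$, and the elementary inequality $(\alpha+1)(\beta+1)-\min\{\alpha+1,\beta+1\}\ge\alpha(\beta+1)$ (an identity when $\beta\le\alpha$, and equivalent to $\beta\ge\alpha$ otherwise) finishes this case. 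If instead $(\alpha+1)(\beta+1)>p$, I would show $G=\mathbb{F}_p^*$ outright by the classical Thue pigeonhole: for any $a\in\mathbb{F}_p^*$ the $\alpha+1$ distinct residues $0,a,2a,\dots,\alpha a$ in $\{0,1,\dots,p-1\}$ split the cycle $\mathbb{Z}/p\mathbb{Z}$ into $\alpha+1$ arcs of total length $p$, so for some $x_1\ne x_2$ in $\{0,\dots,\alpha\}$ one has $a(x_1-x_2)\equiv\pm y\pmod{p}$ with $1\le y\le p/(\alpha+1)<\beta+1$; as $y\in[\beta]$ and $|x_1-x_2|\in[\alpha]$, this gives $a\in G$. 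Thus $|G|=p-1\ge\alpha(\beta+1)$ by the hypothesis $\alpha(\beta+1)\le p-1$.

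The step I expect to be the real obstacle is conceptual rather than computational: the hypothesis $\alpha(\beta+1)\le p-1$ actually available is weaker than the hypothesis $(\alpha+1)(\beta+1)<p$ needed to invoke Theorem~\ref{dswT}, the shortfall $(\alpha+1)(\beta+1)-\alpha(\beta+1)=\beta+1$ being enough to push the product past~$p$. The point is to recognize that this leftover range is exactly the classical Thue regime, in which every residue class is already admissible, so the two cases dovetail with nothing lost; granting that, the identification $\mathcal{D}_U=\{0,\infty\}\cup G$, the attendant sign and zero bookkeeping, and the two elementary inequalities are all routine. Finally, I would note that in the diagonal case $\alpha=\beta$ one may substitute Theorem~\ref{maind} for Theorem~\ref{dswT}, obtaining, when $\alpha$ is of order $\sqrt{p}$, the asymptotically stronger count $D\big((\alpha+1)/\sqrt{p}\big)\,p-2+O\big(p^{3/4}(\log p)^{2^{3/2}+1}\big)$ of admissible $a$.
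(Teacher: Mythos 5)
Your argument is correct, and in fact the paper offers nothing to compare it against: Theorem~\ref{TV} is quoted from Solymosi's paper~\cite{TV} without proof, the authors only remarking that it ``can be viewed as a lower bound for the number of directions determined by $[\alpha]\times[\beta]$.'' Your proposal makes that remark precise and then actually closes the loop using only results stated in the paper. The dictionary $\mathcal{D}_U=\{0,\infty\}\cup G$ for $U=[\alpha+1]\times[\beta+1]$ is right (note the index shift: the correct Cartesian product is $[\alpha+1]\times[\beta+1]$, not the $[\alpha]\times[\beta]$ of the paper's informal remark, since differences of $[\alpha+1]$ run over $\{-\alpha,\dots,\alpha\}$), the inequality $(\alpha+1)(\beta+1)-\min\{\alpha+1,\beta+1\}\ge\alpha(\beta+1)$ reduces to $\beta+1\ge\min\{\alpha+1,\beta+1\}$ and so always holds, and you correctly spotted the genuine issue, namely that $\alpha(\beta+1)\le p-1$ does not imply $(\alpha+1)(\beta+1)<p$ (e.g.\ $\alpha=1$, $\beta=p-2$). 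Your Thue pigeonhole disposes of that complementary range cleanly: when $(\alpha+1)(\beta+1)>p$ the minimal gap among the $\alpha+1$ residues $0,a,\dots,\alpha a$ is at most $\lfloor p/(\alpha+1)\rfloor\le\beta$, so every $a\in\mathbb{F}_p^*$ is admissible and $p-1\ge\alpha(\beta+1)$ finishes it (the case $(\alpha+1)(\beta+1)=p$ being impossible by primality). The one convention worth flagging explicitly is $\beta\ge1$: if $0\in\mathbb{N}$ and $\beta=0$ the statement is false as written, so the paper's $\mathbb{N}$ must mean positive integers; you dismissed only $\alpha=0$. Solymosi's original proof is a Rédei-type polynomial argument applied directly, so your derivation --- Theorem~\ref{dswT} as a black box plus the classical Thue lemma --- is a genuinely different and arguably more modular route, at the cost of inheriting the full strength of Theorem~\ref{dswT}.
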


Both the statement and the proof of Theorem~\ref{TV} share some similarity with Theorem~\ref{dswT}; in fact, Theorem~\ref{TV} can be viewed as a lower bound for the number of directions determined by $[\alpha] \times [\beta]$. Therefore we see that Theorem~\ref{maind} immediately improves Theorem~\ref{TV} in the case $\alpha=\beta<\sqrt p$; see also~\cite[Remark 6]{TV} for a related discussion when $\alpha=\beta<\sqrt{\frac p2}$. The lower curve in Figure~\ref{ffig} represents the lower bound in Theorem~\ref{TV}, while the upper curve represents our asymptotic formula, which can be rephrased as follows.

\begin{cor}\label{TVcor}
Let $p$ be a prime. For any positive integer~$n$ satisfying $n < \sqrt{p}$, there are
$D(\frac n{\sqrt p})p+O\big(p^{3/4}(\log p)^{2^{3/2}+1}\big)$ distinct elements
$a\in\mathbb{F}_p^*$ for which there exist $x,y \in [n]$ 
such that $ax \equiv \pm y \Mod{p}$.
\end{cor}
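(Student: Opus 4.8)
The plan is to recognize Corollary~\ref{TVcor} as a direct reformulation of Theorem~\ref{maind}. Write
\[
S = \bigl\{a \in \mathbb{F}_p^* : ax \equiv \pm y \Mod{p} \text{ for some } x,y \in [n]\bigr\},
\]
so that the goal is exactly the assertion $|S| = D(\tfrac{n}{\sqrt p})\,p + O\bigl(p^{3/4}(\log p)^{2^{3/2}+1}\bigr)$. The strategy is to identify~$S$, up to $O(\sqrt p)$ elements, with the set of directions determined by a square grid, and then quote Theorem~\ref{maind}.

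First I would unwind the congruence. Since $n < \sqrt p < p$, every $x \in [n]$ is invertible modulo~$p$, so $ax \equiv \pm y \Mod{p}$ is equivalent to $a \equiv \pm y x^{-1} \Mod{p}$; hence $S = \{\pm y x^{-1} \bmod p : x, y \in [n]\}$, and in particular $0 \notin S$. On the other hand, as remarked after Theorem~\ref{dswT}, the set of directions determined by $A \times A$ equals $(A-A)/(A-A)$. Taking $A = [n+1]$, we have $[n+1] - [n+1] = \{-n, \dots, n\}$, so the finite directions determined by $[n+1]^2$ are precisely $\{0\} \cup \{\pm i j^{-1} \bmod p : i, j \in [n]\}$, while the only other direction is the vertical one~$\infty$. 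Since $\{\pm i j^{-1} \bmod p : i, j \in [n]\} = S$ and neither $0$ nor~$\infty$ lies in $S \subseteq \mathbb{F}_p^*$, the number of directions determined by $[n+1]^2$ is exactly $|S| + 2$.

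It then remains to apply Theorem~\ref{maind} with $n$ replaced by $n+1$ (the theorem holds for every positive integer side length, being merely trivial outside the range $\sqrt{p/2} < n < \sqrt p$): the number of directions determined by $[n+1]^2$ equals $D\bigl(\tfrac{n+1}{\sqrt p}\bigr)p + O\bigl(p^{3/4}(\log p)^{2^{3/2}+1}\bigr)$. Because $D$ is Lipschitz continuous on $\mathbb{R}$ (as noted just after Theorem~\ref{maind}), $\bigl|D(\tfrac{n+1}{\sqrt p}) - D(\tfrac{n}{\sqrt p})\bigr| = O(p^{-1/2})$, so replacing $\tfrac{n+1}{\sqrt p}$ by $\tfrac{n}{\sqrt p}$ in the main term introduces an error of only $O(\sqrt p)$, which — together with the harmless additive constant~$2$ — is absorbed into $O(p^{3/4}(\log p)^{2^{3/2}+1})$. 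Combining these steps yields $|S| = D(\tfrac{n}{\sqrt p})p + O(p^{3/4}(\log p)^{2^{3/2}+1})$, which is the claim.

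I do not anticipate a genuine obstacle here: Corollary~\ref{TVcor} really is just a restatement of Theorem~\ref{maind}, and the only point requiring care is the boundary bookkeeping — shifting from side length $n$ to $n+1$ so that the ratios of differences reproduce~$S$ exactly, and accounting for the horizontal and vertical directions $0$ and~$\infty$ — all of which perturb the count by only $O(\sqrt p)$, comfortably inside the stated error term.
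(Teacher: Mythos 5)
Your proposal is correct and is exactly the paper's (implicit) argument: the paper offers no separate proof of Corollary~\ref{TVcor}, presenting it as a rephrasing of Theorem~\ref{maind}, and your identification of the set $S=\{\pm yx^{-1}\bmod p: x,y\in[n]\}$ with the nonzero, non-vertical directions of $[n+1]^2$, together with the Lipschitz continuity of $D$ to absorb the shift from $\tfrac{n+1}{\sqrt p}$ to $\tfrac{n}{\sqrt p}$, is precisely the intended bookkeeping.
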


It is possible to generalize Theorem~\ref{Sz} and Theorem~\ref{dswT} to more general finite fields, although the methods become more technical; see for example the survey paper by Sz\H{o}nyi~\cite{Sz} and recent papers by Dona~\cite{DD} and the third author~\cite{Yip}. On the other hand, it is straightforward to generalize our Theorem~\ref{maind} to an arbitrary finite field, since any arithemtic progression of length~$n$ in $\mathbb{F}_{p^k}$ is still linearly equivalent to~$[n]$. Moreover, we can replace $[n]$ by sets $A$ such that $A-A$ contains a long arithmetic progression, since the size of the direction set $\mathcal{D}_{A\times A} = (A-A)/(A-A)$ is invariant under any affine transformation of~$A$. Recall that a {\em homogeneous arithmetic progression} is an arithmetic progression whose first term is equal to its common difference. Theorem~\ref{maind} immediately implies the following lower bound:

\begin{cor} \label{dcor1}
Let $F$ be a field with characteristic $p$. Let $A \subset F$ be a set such that $A-A$ contains a homogeneous arithmetic progression of length $n$. Then the number of directions determined by $A \times A \subset F^2$ is at least 
\[ D\bigg(\frac n{\sqrt p}\bigg)p + O\big(p^{3/4}(\log p)^{2^{3/2}+1}\big).\]
\end{cor}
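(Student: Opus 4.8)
The plan is to deduce Corollary~\ref{dcor1} directly from Theorem~\ref{maind} by an affine-invariance argument. First I would recall that the direction set $\mathcal{D}_{A\times A}$ equals $(A-A)/(A-A)$, as noted in the discussion following Theorem~\ref{dswT}, and that this set is unchanged when $A$ is replaced by $cA + t$ for any $c \in F^\ast$ and $t \in F$: indeed $(cA+t)-(cA+t) = c(A-A)$, so the ratio set is preserved exactly. Hence, if $A - A$ contains a homogeneous arithmetic progression $P = \{d, 2d, \ldots, nd\}$ of length $n$ with first term equal to common difference $d$, then after dividing by $d$ we may assume $[n] = \{1, 2, \ldots, n\} \subseteq A - A$ (working inside the prime subfield $\mathbb{F}_p \subseteq F$).

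Second I would argue that the direction set is monotone under containment of the difference set: if $A - A \supseteq [n]$, then
\[
\mathcal{D}_{A\times A} = \frac{A-A}{A-A} \supseteq \frac{[n]}{[n]} = \frac{[n]-1}{[n]-1}\bigg|_{\text{shifted}},
\]
so more carefully, since $[n] \subseteq A-A$ we have $\{ x/y : x, y \in [n], y \neq 0\} \cup \{\infty\} \subseteq (A-A)/(A-A)$, and this latter set is precisely $\mathcal{D}_{B \times B}$ for $B$ an arithmetic progression of length $n$ — because $\mathcal{D}_{B\times B} = (B-B)/(B-B)$ and $B - B = \{-(n-1)d, \ldots, (n-1)d\}$, whose ratio set with itself is the same as $\{x/y : x, y \in [n-1]\} \cup \{\infty\}$ up to the slight index shift. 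The cleanest route is simply: $\mathcal{D}_{A \times A}$ contains $\mathcal{D}_{[n]\times[n]}$ whenever $A-A \supseteq [n] - [n] \supseteq \{1, \ldots, n-1\}$ (and in fact we get length-$n$ worth by the homogeneous hypothesis). Thus $|\mathcal{D}_{A\times A}| \geq |\mathcal{D}_{[n]^2}|$, and applying Theorem~\ref{maind} to the right-hand side gives the claimed lower bound $D(n/\sqrt p)\, p + O(p^{3/4}(\log p)^{2^{3/2}+1})$.

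The only subtlety, and the step I would be most careful about, is bookkeeping the off-by-one in the lengths: the homogeneous progression of length $n$ inside $A-A$ is what guarantees we capture the full set $\mathcal{D}_{[n]^2}$ rather than $\mathcal{D}_{[n-1]^2}$, and one must check that a homogeneous progression $\{d, 2d, \ldots, nd\}$ is symmetric enough — it is not itself symmetric about $0$, but since it sits inside $A - A$, which is always symmetric, we automatically also have $\{-d, -2d, \ldots, -nd\} \subseteq A-A$, hence $\{-nd, \ldots, -d, 0, d, \ldots, nd\} \subseteq A - A$, which contains a translate of $[n]-[n]$ scaled appropriately; dividing through by $d$ and using $\mathcal{D}_{[n]^2} = ([n]-[n])/([n]-[n])$ closes the gap. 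Everything else is formal: characteristic $p$ means the prime subfield is $\mathbb{F}_p$, all the relevant elements $1, \ldots, n$ with $n < \sqrt p$ are distinct and nonzero there, so Theorem~\ref{maind} applies verbatim, and the error term is inherited unchanged since we only use the lower bound direction.
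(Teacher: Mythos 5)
Your proposal is correct and follows essentially the same route the paper takes (the paper presents this as an immediate consequence of Theorem~\ref{maind}, justified by the affine invariance of $\mathcal{D}_{A\times A}=(A-A)/(A-A)$): scale by $d^{-1}$, use the symmetry of $A-A$ to get $\{-n,\dots,n\}\supseteq [n]-[n]$ inside $(A-A)/d$, and conclude $\mathcal{D}_{A\times A}\supseteq\mathcal{D}_{[n]^2}$ before applying Theorem~\ref{maind}. The only nit is in your middle paragraph, where $\{x/y:x,y\in[n]\}\cup\{\infty\}$ is not all of $\mathcal{D}_{B\times B}$ (it omits the negative directions) and where you should say that $\{-nd,\dots,nd\}$ contains the \emph{dilate} $d\cdot([n]-[n])$ rather than a translate (translation does not preserve the ratio set) --- but your final symmetry argument supplies exactly what is needed, so the proof stands.
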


We digress slightly to give an example where Corollary~\ref{dcor1} is much stronger than the earlier theorems. Since we will be content with noting the relevant orders of magnitude rather than the leading constants, we note that the conclusions in this example follow already from Theorem~\ref{dswT} once one notes that it suffices to consider homogeneous arithmetic progressions in the difference set $A-A$.

\begin{example} \rm 
The Stanley sequence $S \subset \mathbb{Z}$ consists of all nonnegative integers whose base-$3$ representation contains only the digits~$0$ and~$1$. It was introduced in~\cite{OS} as an example of a set containing no arithmetic progressions of length~$3$; for our purposes, however, the relevant property is that its finite truncations $S_n = S \cap [0,n]$ are small yet their difference sets contain long homogeneous arithmetic progressions. In particular, $n^\eta \ll |S_n| \ll n^\eta$ where $\eta=\frac{\log2}{\log3} \approx 0.631$; but it is also easy to show that $[1,\frac n3] \subset S_n-S_n$.

Fix a prime $p\ge n$ and consider $S_n \times S_n$ as a subset of $\mathbb{F}_p^2$.
A direct application of Theorem~\ref{Sz} shows only that the number of directions determined by $S_n \times S_n$ is $\gg \min\{|S_n|^2,p\}$. However, Corollary~\ref{dcor1} shows that the number of directions determined by $S_n \times S_n$ is $\gg \min\{n^2,p\} \gg \min\{|S_n|^{2/\eta},p\}$, where $\frac2\eta \approx 3.17$. (We also note that no better upper bound for the number of directions determined by $S_n\times S_n$ is known other than the trivial $\min\{|S_n|^4,p\}$.)
\end{example}

Next we consider the case where $A$ is ``close to" an arithmetic progression, in which case we expect the {\em doubling constant} $|A-A|/|A|$ to be small. The structure of sets with small doubling constant has been widely studied since Freiman's seminal work~\cite{Freiman}. Freiman's ``$3k-4$ theorem'' states that if $A$ is a finite set of integers satisfying $|A+A| \leq 3|A|-4$, then $A$ must be contained in a short arithmetic progression; his celebrated ``$2.4$ theorem''~\cite[Theorem 2.1]{Freiman} is a similar statement in the finite field setting. Recently, Lev and Shkredov~\cite{LS} refined Freiman's work and showed the following ``$2.6$ theorem'' in terms of $A-A$.

\begin{theorem}[Lev/Shkredov]\label{doubling}
Let $p$ be a prime. If $A \subset \mathbb{F}_p$ such that $|A|<0.0045p$, and $|A-A| \leq 2.6 |A|-3$, then $A$ is contained in an arithmetic progression $P$ with at most $|A-A|-|A|+1$ terms.
\end{theorem}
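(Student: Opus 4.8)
The plan is to establish Theorem~\ref{doubling} by reducing the problem from $\mathbb{F}_p$ to $\mathbb{Z}$ via a \emph{rectification} argument and then invoking a Freiman-type ``$3k-4$'' theorem for difference sets of integers --- this is essentially the route behind the Lev--Shkredov result. Write $K = |A-A|/|A|$, so the hypothesis gives $K \le 2.6 - 3/|A| < 2.6$; in particular $A$ has small doubling, so by the Pl\"unnecke--Ruzsa inequalities $|kA - \ell A| = O_K(|A|)$ for all fixed $k,\ell$, and the additive energy of $A$ is at least $|A|^3/K$.

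The heart of the argument is the rectification step: one seeks a scalar $\xi \in \mathbb{F}_p^*$ such that the dilate $\xi \cdot A$, lifted to the residues $\{0,1,\dots,p-1\} \subset \mathbb{Z}$, is contained in an arc of $\mathbb{Z}/p\mathbb{Z}$ of length less than $p/2$. For such a $\xi$, the integer lift $\widetilde{A}$ of $\xi \cdot A$ satisfies $|\widetilde{A} - \widetilde{A}| = |A - A|$ (no wraparound modulo $p$ occurs), so any arithmetic progression in $\mathbb{Z}$ containing $\widetilde{A}$ transfers, upon reduction modulo $p$ and multiplication by $\xi^{-1}$, to an arithmetic progression of $\mathbb{F}_p$ of the same length containing $A$. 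The existence of a suitable $\xi$ whenever $|A|$ is below a small absolute fraction of $p$ is the Green--Ruzsa rectification principle; the content here is to carry it out with a doubling constant as large as $2.6$ (rather than in Freiman's classical range near $2.4$) and to optimize the dependence on $K$, which is exactly what produces the explicit threshold $|A| < 0.0045p$. Concretely one runs a popular-difference/Fourier analysis: the average number of representations of an element of $A-A$ as a difference from $A$ exceeds $|A|/K > |A|/2.6$, and were no dilate of $A$ to fit into a short arc, the large Fourier coefficients of $1_A$ could be used to show that $A$ must be so spread out around $\mathbb{Z}/p\mathbb{Z}$ that $|A-A| > 2.6|A| - 3$, a contradiction.

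With rectification available, $\widetilde{A}$ may be treated as a finite set of integers with $|\widetilde{A} - \widetilde{A}| \le 2.6|A| - 3 < 3|A| - 4$ (valid for $|A| \ge 3$, while the finitely many smaller sets are dispatched by direct inspection). The $3k-4$ theorem for difference sets over $\mathbb{Z}$, in the sharp form going back to Freiman and refined by Lev and Stanchescu, then shows that $\widetilde{A}$ lies in an arithmetic progression with at most $|\widetilde{A} - \widetilde{A}| - |\widetilde{A}| + 1 = |A-A| - |A| + 1$ terms, and transferring back yields the stated conclusion. I expect the main obstacle to be the rectification step with $K$ pushed up to $2.6$: the admissible density in the rectification principle degrades as the doubling constant increases, so obtaining it beyond the classical range while still extracting a clean, nearly optimal density threshold requires delicate Fourier-analytic estimates, and these form the genuine technical core; by contrast the integer input and the small-$|A|$ cases are routine.
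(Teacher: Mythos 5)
This statement is not proved in the paper at all: it is quoted verbatim from Lev and Shkredov \cite{LS} and used as a black box, so there is no internal proof to compare your argument against. Judged on its own, your proposal correctly identifies the overall architecture of the Lev--Shkredov argument --- rectify $A$ (find a dilate $\xi\cdot A$ whose integer lift has no wraparound, so that difference sets are preserved), then apply a Freiman-type $3k-4$ theorem for difference sets over $\mathbb{Z}$ to place $\widetilde{A}$ in an arithmetic progression of length $|A-A|-|A|+1$, and transfer back. The final integer step is a legitimate citation and the transfer bookkeeping is fine.

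However, there is a genuine gap, and you have located it yourself without filling it: the rectification step at doubling constant $2.6$ with density threshold $0.0045$ is asserted, not proved. The Green--Ruzsa rectification principle, as usually stated, degrades rapidly with the doubling constant and does not by itself deliver a clean linear-in-$p$ density threshold at $K=2.6$; Freiman's classical method reaches only $2.4$, and the entire point of the Lev--Shkredov paper (as its title advertises) is the passage from $2.4$ to $2.6$, which occupies essentially all of their technical effort. Your sentence beginning ``Concretely one runs a popular-difference/Fourier analysis\dots'' describes a strategy rather than executing one: you would need to show quantitatively that if no dilate of $A$ fits in an arc of length $<p/2$, then $|A-A|>2.6|A|-3$ whenever $|A|<0.0045p$, and no estimate in your write-up produces either explicit constant. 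As it stands the proposal proves the theorem only modulo its hardest ingredient, so it should be regarded as an accurate outline of the known proof rather than a proof.
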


Note that if $A$ is contained in an arithmetic progression $P$, then we can use Theorem~\ref{maind} to give an asymptotic formula for the number of directions determined by $P \times P$, and hence an upper bound for the number of directions determined by $A \times A$. This observation, together with Theorem~\ref{doubling}, immediately imply the following corollary.

\begin{cor} \label{dcor2}
Let $p$ be a prime. Let $A \subset \mathbb{F}_p$ satisfy $|A| \leq \sqrt{p}$ and $|A-A| \leq 2.6 |A|-3$. Then the number of directions determined by $A \times A \subset \mathbb{F}_p^2$ is at most 
\[ D\bigg(\frac{|A-A|-|A|+1}{\sqrt{p}}\bigg) p + O\big(p^{3/4}(\log p)^{2^{3/2}+1}\big),\]
where the function $D(\lambda)$ is defined in equation~\eqref{D def}.
\end{cor}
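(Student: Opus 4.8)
The plan is to deduce the corollary directly from Theorem~\ref{doubling} and Theorem~\ref{maind}, the two inputs already singled out in the discussion above; the only subtleties are reconciling the slightly different size hypotheses of those two theorems and passing from an arithmetic progression of possibly-too-small length to one of exactly the right length.

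First I would reduce to the case of large $p$. Since the number of directions determined by any subset of $\mathbb{F}_p^2$ is at most $p+1$ and $D(\lambda)\ge 0$ for every $\lambda$, enlarging the implied constant in the error term lets us assume $p$ exceeds any fixed bound; in particular we may assume $\sqrt p<0.0045p$, so that the hypothesis $|A|\le\sqrt p$ forces $|A|<0.0045p$. Together with the hypothesis $|A-A|\le 2.6|A|-3$, this places us in the setting of Theorem~\ref{doubling}. Note also that, using only $|A|\le|A-A|$ and the doubling hypothesis, the quantity $M:=|A-A|-|A|+1$ satisfies $1\le M\le 1.6|A|-2\le 1.6\sqrt p-2<p$.

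Next, Theorem~\ref{doubling} furnishes an arithmetic progression $P\subset\mathbb{F}_p$ with $A\subseteq P$ and $|P|\le M$. Since $M<p$, I can extend $P$ to an arithmetic progression $Q\subset\mathbb{F}_p$ of length exactly $M$ with $P\subseteq Q$. Then $A\times A\subseteq Q\times Q$, whence $\mathcal{D}_{A\times A}\subseteq\mathcal{D}_{Q\times Q}$, so the number of directions determined by $A\times A$ is at most the number determined by $Q\times Q$. Because $Q$ is an arithmetic progression of length $M$ in $\mathbb{F}_p$, it is linearly equivalent to $[M]$, so $Q\times Q$ and $[M]^2$ determine the same number of directions. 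Applying Theorem~\ref{maind} with $n=M$ then yields
\[ |\mathcal{D}_{A\times A}| \le |\mathcal{D}_{Q\times Q}| = D\bigg(\frac{M}{\sqrt p}\bigg)p+O\big(p^{3/4}(\log p)^{2^{3/2}+1}\big) = D\bigg(\frac{|A-A|-|A|+1}{\sqrt p}\bigg)p+O\big(p^{3/4}(\log p)^{2^{3/2}+1}\big), \]
which is precisely the claimed bound. (If $M\ge\sqrt p$ then $D(M/\sqrt p)=1$, so the bound reads $p+O(\cdots)$ and is already implied by $|\mathcal{D}_{A\times A}|\le p+1$; Theorem~\ref{maind} covers this range uniformly, so no separate case analysis is required.)

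I do not expect any genuine obstacle: the corollary is a bookkeeping combination of two results already in hand. The points that demand a little care are (i) the mismatch between the condition $|A|\le\sqrt p$ in the corollary and the condition $|A|<0.0045p$ in Theorem~\ref{doubling}, circumvented by the reduction to large $p$; (ii) replacing the progression $P$ of Theorem~\ref{doubling}, whose length is merely bounded above by $M$, with a progression of length exactly $M$, which is legitimate because $M<p$; and (iii) the affine invariance of the direction set, which allows Theorem~\ref{maind}---stated for $[M]^2$---to be applied to $Q\times Q$.
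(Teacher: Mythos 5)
Your proposal is correct and follows the same route the paper intends: combine Theorem~\ref{doubling} (after noting $|A|\le\sqrt p<0.0045p$ for large $p$, with small $p$ absorbed into the error term) with Theorem~\ref{maind} applied to a containing arithmetic progression, using affine invariance of the direction set. The paper gives no formal proof beyond this observation; your write-up merely fills in the routine details (extending $P$ to length exactly $|A-A|-|A|+1$ rather than invoking monotonicity of $D$), which is an equivalent bookkeeping choice.
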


\subsection{The number of solutions to $ad+bc=p$}

In Section \ref{reduction}, we will reduce the problem of counting the number of directions to estimating the number of solutions to the Diophantine equation $ad+bc=p$. For convenience, we introduce the following notation.

\begin{defn}
Let $N(p,n)$ denote the number of solutions $(a,b,c,d) \in [n]^4$ to the equation $ad+bc=p$.
\end{defn}

Since $ad+bc \le 2n^2$ when $(a,b,c,d) \in [n]^4$, we see that $N(p,n)=0$ trivially when $n<\sqrt{\frac p2}$. Our second main theorem gives an asymptotic formula for $N(p,n)$ in the same ``interesting'' range $n \in (\sqrt{\frac p2},\sqrt{p})$ as in the previous section.
Throughout this paper, it will be convenient for us to define the positive parameter $\lambda$ which will always have the following relationship with $p$ and $n$:
\begin{equation} \label{lambda}
n = \lambda \sqrt{p}.
\end{equation}

\begin{theorem}\label{maint}
Let $p$ be a prime, let $\frac1{\sqrt2}<\lambda<1$, and set $n=\lambda\sqrt{p}$. The number of solutions $(a,b,c,d) \in [n]^4$ to the equation $ad+bc=p$ is
\begin{equation}\label{asymptotic}
N(p,n) = \bigg(\frac{12}{\pi^2} \lambda^2 -D(\lambda)\bigg) p +O\big(p^{3/4}(\log p)^{2^{3/2}+1}\big),
\end{equation}
where the function $D(\lambda)$ is defined in equation~\eqref{D def}, and the implied constant in the error term is absolute.
\end{theorem}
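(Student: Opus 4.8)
The plan is to count lattice points $(a,b,c,d)\in[n]^4$ with $ad+bc=p$ by fixing the pair $(a,b)$ and counting the number of $(c,d)$, then summing. Write $m=ad+bc$; for fixed $(a,b)$ with $1\le a,b\le n$ and $\gcd(a,b)=g$, the equation $ad+bc=p$ (with $\gcd(a,b)=g\nmid p$ unless $g=1$, since $p$ is prime and $a,b<\sqrt p$) has solutions in integers $(c,d)$ only when $g=1$, in which case the solutions form a single residue class: $d\equiv \bar a\,p\pmod b$, and then $c=(p-ad)/b$. So for coprime $(a,b)$ the number of valid $(c,d)\in[n]^2$ is the number of integers $d$ in an interval of length roughly $n$ lying in a fixed residue class mod $b$, intersected with the constraint that $c=(p-ad)/b\in[1,n]$. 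Both constraints on $d$ cut out an interval (the $d\le n$ and $c\ge 1$ constraints give $d\le\min\{n,(p-b)/a\}$; the $d\ge1$ and $c\le n$ constraints give $d\ge\max\{1,(p-bn)/a\}$), so the count is $\big(\text{length of interval}\big)/b + O(1)$, and the main term becomes a sum over coprime pairs $(a,b)$ of an explicit piecewise-linear-in-$p$ quantity divided by $b$.

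Next I would convert this double sum into an integral. The coprimality condition is handled by Möbius inversion: $\sum_{\gcd(a,b)=1} = \sum_{e} \mu(e)\sum_{e\mid a,\,e\mid b}$, and the factor $\sum_e \mu(e)/e^{?}$ will ultimately produce the $6/\pi^2 = 1/\zeta(2)$ type constants (more precisely, after the $1/b$ weight and the $e$-scaling one gets $\sum_e \mu(e)/e^2 = 6/\pi^2$). The inner sum over $a$ and $b$ of (interval length)$/b$ is a Riemann sum for a two-dimensional integral $\iint$ over the region $\{(x,y)\in(0,n]^2\}$ of the normalized count; rescaling $x=\sqrt p\,u$, $y=\sqrt p\,v$ turns this into $p$ times an integral over $(0,\lambda]^2$ of a function that is piecewise defined according to which of the $\min/\max$ alternatives is active. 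That region-splitting is exactly where the $\lambda>1/\sqrt2$ threshold enters: when $\lambda\le 1/\sqrt2$ we have $2n^2\le p$ so there are no solutions, and for $\lambda\in(1/\sqrt2,1)$ the locus $ad+bc=p$ genuinely crosses the box, producing the dilogarithm after integrating $\log$-type integrands (the antiderivative of $\log(1-t)/t$ is $-\Li_2$, matching~\eqref{Li}). Carrying out this integral and simplifying should reproduce exactly $\big(\tfrac{12}{\pi^2}\lambda^2 - D(\lambda)\big)p$; I would double-check the constant by noting that $\tfrac{12}{\pi^2}\lambda^2 p$ is (asymptotically) the total number of coprime-weighted lattice points with $ad+bc$ in the relevant range, and $D(\lambda)p$ peels off the directions piece, consistent with the reduction in Section~\ref{reduction}.

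The main obstacle is controlling the accumulated error term to get the claimed $O(p^{3/4}(\log p)^{2^{3/2}+1})$ rather than the trivial $O(n^2)=O(p)$ one would get from a crude $O(1)$ per pair $(a,b)$. The $O(1)$ rounding error in "number of integers in an interval in a fixed residue class mod $b$" must be beaten for most $b$; this requires exponential-sum (or continued-fraction / three-distance) input — bounding $\sum_{a,b}\psi$-type sums where $\psi$ is the sawtooth function, using the subject classification's hint (11L05, exponential sums) of Kloosterman-type or van der Corput estimates on $\sum_d e(\bar a p d/b)$. The $(\log p)^{2^{3/2}}$ factor is the tell-tale sign of an iterated Cauchy–Schwarz / large-sieve or a divisor-function moment: summing a bound like $\tau(b)$ or $(\log b)^{c}$ over $b\le n$, or applying Cauchy–Schwarz twice (each costing a square root and a $\log$), naturally yields an exponent $2^{3/2}=2\sqrt2$ when the losses compound multiplicatively across $\log$-many dyadic ranges. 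Getting this bookkeeping right — choosing the cutoff between "small $b$, bound trivially" and "large $b$, use exponential sums", and tracking the logarithmic powers through the Möbius sum and the $a$-sum — is the technical heart of the argument; the transition $p^{3/4}$ strongly suggests balancing a $p$-sized trivial contribution from $b\lesssim p^{1/4}$ against a $p/\sqrt b$-type cancellation for larger $b$.
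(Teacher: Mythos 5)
Your plan follows the paper's actual proof in all essentials: fix a coprime pair $(a,b)$ and count $(c,d)$ via the single residue class $d\equiv p\overline{a}_b \pmod b$, extract the main term by M\"obius inversion plus a sum-to-integral comparison that produces the dilogarithm via the functional equation $\Li_2(z)+\Li_2(1-z)=\frac{\pi^2}{6}-\log z\log(1-z)$, and beat the per-pair $O(1)$ rounding error on average using incomplete Kloosterman sums, with the exponent $2^{3/2}$ arising exactly as you surmise from the moment $\sum_{b\le\sqrt p}\tau(b)^{3/2}\ll \sqrt p\,(\log p)^{2^{3/2}-1}$. The one heuristic that does not match the paper is your guess that $p^{3/4}$ comes from balancing a trivial range $b\lesssim p^{1/4}$ against cancellation for larger $b$: in fact there is no cutoff, and instead each of the $\asymp\sqrt p$ values of $b$ contributes a discrepancy $\ll\tau(b)^{3/2}p^{1/4}(\log p)^2$ (Erd\H{o}s--Tur\'an plus the Weil bound, after splitting the $a$-range into a complete reduced residue system mod $b$, handled exactly by a Bernoulli-polynomial identity, minus a short interval), and these contributions are simply summed.
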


\noindent
We will see in Section~\ref{reduction} that Theorem~\ref{maint} (together with Lemma~\ref{connect}) implies Theorem~\ref{maind}. Therefore our main task in this paper is to prove Theorem~\ref{maint}.

For the rest of this section, we explore some interesting consequences of Theorem~\ref{maint}, as well as revisiting known upper bounds and lower bounds on $N(p,n)$.

Note that when $n<\sqrt{p}$, we have $0<ad+bc<2p$ whenever $a,b,c,d \in [n]$; in this range, therefore, the equation $ad+bc=p$ is equivalent to the congruence $-ad \equiv bc \Mod p$.
The congruence $ab \equiv cd \Mod p$ is well studied (see for example~\cite{ACZ, CS}), and
similar bilinear congruences have been examined by many mathematicians.
The standard way to estimate the number of solutions to such bilinear congruences is to estimate fourth moments of character sums.

For any integer $u$ not divisible by~$p$, let $N_u(p,n)$ denote the the number of solutions $(a,b,c,d) \in [n]^4$ to the congruence $uab \equiv cd \Mod p$, where $a,b,c,d \in [n]$.  From the standard orthogonality relation
\[
\sum_{\chi\Mod p} \chi(a)=
\begin{cases}
p-1, \quad & \text{if } a \equiv 1 \Mod p,\\
0, \quad &\text{otherwise},
\end{cases}
\]
where the sum runs over all Dirichlet characters modulo~$p$ (refer to~\cite[Chapter 4]{MV} for example), it follows that
\begin{equation} \label{oddeven}
N_u(p,n)=\frac{1}{p-1} \sum_{\chi\Mod p} \sum_{1 \leq a,b,c,d \leq n} \chi(uabc^{-1}d^{-1})= \frac{1}{p-1} \sum_{\chi\Mod p} \chi(u) \bigg|\sum_{m=1}^{n} \chi(m)\bigg|^4.
\end{equation}
When $\chi=\chi_0$ is the principal character, each $\chi(m)$ in the inner sum equals~$1$, and therefore the contribution to the right-hand side from $\chi=\chi_0$ is exactly $\frac{n^4}{p-1}$; on the other hand, Ayyad, Cochrane, and Zheng~\cite[Theorem~2]{ACZ} showed that
\begin{equation} \label{fourth}
\frac{1}{p-1} \sum_{\chi \neq \chi_0} \bigg|\sum_{m=1}^{n}  \chi(m)\bigg|^4 \ll \begin{cases}
n^2 \log^2 p, &\text{for all } n, \\
n^2 \log p, &\text{if } n \ll \sqrt{p\log p}.
\end{cases}
\end{equation}
It follows that for any $1\le n\le p-1$ and $1\le u\le p-1$,
\begin{equation}\label{trivialb}
N_u(p,n)=\frac{n^4}{p-1}+
\begin{cases}
O(n^2 \log^2 p), &\text{for all } n, \\
O(n^2 \log p), &\text{if } n \ll \sqrt{p\log p}
\end{cases}
\end{equation}
(as remarked at the end of~\cite{ACZ}; see also~\cite[Lemma 5]{CS} for a related discussion).
Although this character sum approach succeeds in obtaining the asymptotic formula~\eqref{trivialb} when $n$ grows faster than $\sqrt{p}\log p$, over short intervals the estimates are poorer. In particular, in the range $n <\sqrt{p}$, for which $N(p,n) = N_{-1}(p,n)$, equation~\eqref{trivialb} simply states that $N(p,n) \ll p \log p$, which is a poor estimate compared to Theorem~\ref{maint}.

We have just seen that the right-hand side of equation~\eqref{fourth} is dominated by the contribution of the principal character~$\chi_0$ when $n$ grows faster than $\sqrt{p}\log p$. However, it turns out that the situation is drastically different when $n<\sqrt p$. Note that it follows immediately from equation~\eqref{oddeven} (and the fact that $\chi(1)=1$ always) that
\begin{align}\label{Nprime}
N_1(p,n) + N_{-1}(p,n) &= \frac2{p-1} \sum_{\substack{\chi\Mod p \\ \chi(-1) = 1}} \bigg|\sum_{m=1}^{n} \chi(m)\bigg|^4 , \\
N_1(p,n) - N_{-1}(p,n) &= \frac2{p-1} \sum_{\substack{\chi\Mod p \\ \chi(-1) = -1}} \bigg|\sum_{m=1}^{n} \chi(m)\bigg|^4. \nonumber
\end{align}
When $n<\sqrt p$, we have already seen that $N_{-1}(p,n)=N(p,n)$ and thus $N_{-1}(p,n) \ll n^2$ by Theorem~\ref{maint}. On the other hand, Ayyad, Cochrane, and Zheng~\cite[Theorem 3]{ACZ} obtained an asymptotic formula for the number of solutions $(a,b,c,d) \in [n]^4$ to $ad=bc$. In particular, when $n <\sqrt{p}$ the equation $ad=bc$ is similarly equivalent to the congruence $ad\equiv bc\pmod p$, and their result becomes the asymptotic formula
\[
N_1(p,n)=\frac{12}{\pi^2}n^2 \log n+ O(n^2) \quad\text{when }n<\sqrt p.
\]
Consequently, equation~\eqref{Nprime} implies the following result:

\begin{cor} \label{odd=even}
Let $p$ be a prime. If $n < \sqrt{p}$, then 
\begin{align*}
\frac{1}{p-1} \sum_{\chi(-1)=1} \bigg|\sum_{m=1}^{n} \chi(m)\bigg|^4 &= \frac{6}{\pi^2} n^2\log n+O(n^2) , \\
\frac{1}{p-1} \sum_{\chi(-1)=-1} \bigg|\sum_{m=1}^{n} \chi(m)\bigg|^4 &= \frac{6}{\pi^2} n^2\log n+O(n^2).
\end{align*}
\end{cor}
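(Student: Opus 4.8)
The plan is to read off Corollary~\ref{odd=even} from the two identities in~\eqref{Nprime}, by substituting known asymptotics for $N_1(p,n)$ and $N_{-1}(p,n)$ and then solving the resulting $2\times2$ linear system for the even and odd fourth-moment sums. All implied constants below will be absolute.

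First I would assemble the two inputs. For the odd count: when $n<\sqrt p$ the congruence $-ad\equiv bc\Mod p$ is equivalent to the equation $ad+bc=p$, since $0<ad+bc<2p$ for $a,b,c,d\in[n]$; hence $N_{-1}(p,n)=N(p,n)$. I claim $N(p,n)=O(n^2)$ uniformly for $n<\sqrt p$. Indeed, if $n<\sqrt{p/2}$ then $N(p,n)=0$ trivially (as $ad+bc\le 2n^2<p$), and if $\sqrt{p/2}\le n<\sqrt p$ then $n\asymp\sqrt p$, so Theorem~\ref{maint} gives $N(p,n)=\big(\tfrac{12}{\pi^2}\lambda^2-D(\lambda)\big)p+O\big(p^{3/4}(\log p)^{2^{3/2}+1}\big)$, whose main term is $O(p)=O(n^2)$ (using $0\le D(\lambda)$ and $\lambda^2<1$) and whose error term is $o(p)=o(n^2)$. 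For the even count I would invoke the Ayyad--Cochrane--Zheng asymptotic already recalled in the excerpt: when $n<\sqrt p$ the congruence $ad\equiv bc\Mod p$ is equivalent to $ad=bc$ (as $|ad-bc|<n^2<p$), and~\cite[Theorem 3]{ACZ} yields $N_1(p,n)=\tfrac{12}{\pi^2}n^2\log n+O(n^2)$.

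Substituting these into~\eqref{Nprime}, the first identity gives
\[
\frac{2}{p-1}\sum_{\chi(-1)=1}\bigg|\sum_{m=1}^n\chi(m)\bigg|^4 = N_1(p,n)+N_{-1}(p,n) = \frac{12}{\pi^2}n^2\log n+O(n^2),
\]
and the second gives
\[
\frac{2}{p-1}\sum_{\chi(-1)=-1}\bigg|\sum_{m=1}^n\chi(m)\bigg|^4 = N_1(p,n)-N_{-1}(p,n) = \frac{12}{\pi^2}n^2\log n+O(n^2).
\]
Dividing each by~$2$ gives the two claimed formulas. (The principal character, which sits in the first sum, contributes $\tfrac{n^4}{p-1}\le n^2$, so it is already absorbed in the error term; no separate treatment is needed.)

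There is no real obstacle here: the corollary is a formal consequence of~\eqref{Nprime}, Theorem~\ref{maint}, and the cited asymptotic for $N_1(p,n)$. The only points that require a moment's care are the two elementary equivalences between bilinear congruences modulo~$p$ and the corresponding integer equations (both valid precisely because $n<\sqrt p$), and the verification that the error term of Theorem~\ref{maint} is $o(n^2)$ throughout the subrange $\sqrt{p/2}\le n<\sqrt p$, which holds since there $p^{3/4}(\log p)^{2^{3/2}+1}=o(p)$ and $n^2\asymp p$.
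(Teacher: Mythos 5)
Your proof is correct and follows essentially the same route as the paper's: the authors likewise deduce the corollary from the identities in~\eqref{Nprime} by combining $N_{-1}(p,n)=N(p,n)\ll n^2$ (trivially zero for $n\le\sqrt{p/2}$ and $O(n^2)$ via Theorem~\ref{maint} otherwise) with the Ayyad--Cochrane--Zheng asymptotic $N_1(p,n)=\frac{12}{\pi^2}n^2\log n+O(n^2)$. Your extra checks (the congruence--equation equivalences and the absorption of the principal character) are sound and require no changes.
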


\noindent
In other words, when examining the fourth moment of character sums modulo~$p$ (that is, the right-hand side of equation~\eqref{oddeven} when $u=1$), the contribution from odd characters is asymptotically equal to the contribution from even characters when $n<\sqrt p$. This is a stark contrast to the dominance of the principal character when~$n$ is only a bit larger than $\sqrt{p}\log p$, which suggests that it would be interesting to study both sides of equation~\eqref{oddeven} as~$n$ transitions between these two quite close orders of magnitude.

Returning to the number of solutions $N(p,n)$ itself, it seems nontrivial to show from first principles even that $N(p,n)\geq 1$ (that is, that there exists $(a,b,c,d) \in [n]^4$ with $ad+bc=p$) when $n<\sqrt{p}$. One may try to express $N(p,n)$ as a convolution by estimating the number of points of the modular hyperbola $\{(a,b) \in [n]^2\colon ab \equiv x \Mod p\}$
for each $x \in [p]$---see for example Shparlinski's survey paper~\cite{Sh}. Indeed, Hart and Iosevich~\cite{HI} showed that if $A \subset \mathbb{F}_p$ satisfies $|A| \geq p^{3/4}$, then $\mathbb{F}_p^* \subset AA+AA$, where 
$AA=\{ab\colon a,b \in A\}$.
Shparlinski~\cite{Sh0} remarked that the Hart/Iosevich proof could be easily extended to show for any $x \in \mathbb{F}_p^*$ and any $A,B,C, D \subset \mathbb{F}_p^*$, the number of solutions $(a,b,c,d)\in A\times B\times C\times D$ to $ad+bc=x$ is
\begin{equation} \label{ABCD}
\frac{|A||B||C||D|}{p-1} +O\big(\sqrt{p|A||B||C||D|}\big),
\end{equation}
which gives an asymptotic formula when $|A||B||C||D|$ grows faster than $p^3$. We remark that the assumption $x \neq 0$ is necessary: if $p \equiv 1 \Mod 4$, $A=B=C$ is the set of quadratic residues modulo $p$, and $D$ is the set of quadratic non-residues modulo $p$, then $|A||B||C||D| \gg p^4$ yet $0 \notin AD+BC$. 

These known results strongly suggest that it is important to distinguish the congruence $ad+bc \equiv 0 \Mod p$ from the congruences $ad+bc \equiv u \Mod p$ where $p \nmid u$; it seems more difficult to study the first congruence than the latter ones. Indeed, Ayyad and Cochrane~\cite[Theorem 2]{AC} showed that the congruence lattice modulo $p$ is well-distributed as long as $ad+bc \equiv 0 \Mod p$ has a solution in a prescribed region; more precisely, they proved the following result:

\begin{theorem}[Ayyad/Cochrane] \label{AC}
Let $a, b, m$ be integers with $m\ge1$ and $\gcd(a, b, m)=1$, and suppose that the congruence $ax +b y \equiv 0 \Mod m$ has a solution $(x_{0}, y_{0}) \in \mathcal{R}_{m}$ with $\operatorname{gcd}\left(x_{0}, y_{0}\right)=1$, where
\begin{equation} \label{Rm}
\mathcal{R}_{m}=\{(x, y) \in \Z^2\colon 0 \leq|x| \leq \sqrt{m},\, 0 \leq|y| \leq \sqrt{m},\, |x|+2|y| \geq \sqrt{m},\, 2|x|+|y| \geq \sqrt{m}\}.
\end{equation}
Then for any integer $c$, the linear congruence
 $a x+b y \equiv c \Mod m$
has a nonzero solution with $|x| \leq \sqrt{m}$ and $|y| \leq \sqrt{m}$.
\end{theorem}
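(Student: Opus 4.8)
The plan is to recast the statement in the language of planar lattices. Let
\[
\Lambda = \{(x,y) \in \Z^2 : ax + by \equiv 0 \Mod m\}.
\]
Since $\gcd(a,b,m) = 1$, the homomorphism $\Z^2 \to \Z/m\Z$ sending $(x,y)$ to $ax+by \bmod m$ is surjective, so $\Lambda$ has index $m$ in $\Z^2$, and as $c$ varies the solution sets of $ax + by \equiv c \Mod m$ run through the $m$ cosets of $\Lambda$. Writing $\mathcal{B} = \{(x,y) \in \Z^2 : |x| \le \sqrt m,\ |y| \le \sqrt m\}$, it therefore suffices to show that every coset of $\Lambda$ meets $\mathcal{B}$: when $c \not\equiv 0 \Mod m$ every point of the corresponding coset is automatically nonzero, and when $c \equiv 0 \Mod m$ the hypothesis itself supplies the nonzero point $(x_0,y_0) \in \mathcal{R}_m \subseteq \mathcal{B}$ (and for $m=1$ everything is trivial, since $(1,0)$ solves every congruence modulo $1$). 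We may thus assume $m \ge 2$, and then $x_0 \ne 0$ and $y_0 \ne 0$: otherwise one coordinate vanishes, so $\gcd(x_0,y_0)=1$ makes the other $\pm1$, and then one of the defining inequalities of $\mathcal{R}_m$ reads $1 \ge \sqrt m$, forcing $m=1$.

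Next I would make the cosets explicit. Because $\gcd(x_0,y_0) = 1$ there is $(s,r) \in \Z^2$ with $x_0 r - y_0 s = 1$, so $\{(x_0,y_0),(s,r)\}$ is a $\Z$-basis of $\Z^2$. Writing a general lattice point as $(x,y) = i(x_0,y_0) + j(s,r)$ and using $ax_0 + by_0 \equiv 0 \Mod m$, we get $ax + by \equiv j\mu \Mod m$ with $\mu = as + br$, and surjectivity forces $\gcd(\mu, m) = 1$. Hence, for a given $c$, choosing the integer $j^*$ with $|j^*| \le m/2$ and $j^*\mu \equiv c \Mod m$, the solution set of $ax + by \equiv c \Mod m$ is exactly $\{\, i(x_0,y_0) + j^*(s,r) : i \in \Z \,\}$, that is, the points $P(i)$, $i \in \Z$, on the line $\ell = \{P(t) := j^*(s,r) + t(x_0,y_0) : t \in \R\}$, where consecutive points are one unit apart in the parameter $t$. (The particular choice of $(s,r)$ is immaterial, as another choice only translates the parametrization along $\ell$.)

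The crux is then a chord-length estimate: I would show that $\{t \in \R : P(t) \in \mathcal{B}\}$ is a closed interval of length at least $1$, which then contains an integer $i$, so that $P(i)$ is the desired point of $\mathcal{B}$. The two conditions $|j^*s + t x_0| \le \sqrt m$ and $|j^* r + t y_0| \le \sqrt m$ cut out intervals in $t$ of lengths $2\sqrt m/|x_0|$ and $2\sqrt m/|y_0|$ whose centres lie at distance $|j^*|/|x_0 y_0|$ apart (here one uses $x_0 r - y_0 s = 1$), and a short computation with these intervals shows that their intersection has length at least $1$ provided $\sqrt m(|x_0| + |y_0|) - |x_0 y_0| \ge |j^*|$ (the easy case in which one of the two intervals contains the other gives length at least $2$ directly). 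Since $|j^*| \le m/2$, it is enough that $\sqrt m(|x_0| + |y_0|) - |x_0 y_0| \ge m/2$; writing $|x_0| = \alpha\sqrt m$ and $|y_0| = \beta\sqrt m$ with $\alpha, \beta \in (0,1]$, this is precisely the inequality $(1-\alpha)(1-\beta) \le \tfrac{1}{2}$. I expect verifying this last inequality to be the one genuinely delicate point, and it is exactly here that the shape of $\mathcal{R}_m$ is used: the membership $(x_0,y_0) \in \mathcal{R}_m$ says $\alpha + 2\beta \ge 1$ and $2\alpha + \beta \ge 1$, so setting $\sigma = 1-\alpha$ and $\tau = 1-\beta$ we have $\sigma,\tau \in [0,1)$ with $\sigma + 2\tau \le 2$ and $2\sigma + \tau \le 2$, hence $\sigma + \tau \le \tfrac{4}{3}$, and AM--GM gives $(1-\alpha)(1-\beta) = \sigma\tau \le \tfrac{4}{9} < \tfrac{1}{2}$, as required.
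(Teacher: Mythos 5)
The paper does not actually prove Theorem~\ref{AC}: it is quoted from Ayyad and Cochrane~\cite{AC}, so there is no internal proof to compare against, and your argument must stand on its own. It does. The lattice-geometric route is sound: since $\gcd(a,b,m)=1$, the set $\Lambda=\{(x,y)\in\Z^2: ax+by\equiv 0\Mod m\}$ has index $m$, and it suffices to show every coset meets the box $[-\sqrt m,\sqrt m]^2$ (the $c\equiv 0$ case being supplied by $(x_0,y_0)$ itself, and $m=1$ being trivial). Completing $(x_0,y_0)$ to a basis via $x_0r-y_0s=1$, the distance between the centres of the two constraint intervals is indeed $|j^*|/|x_0y_0|$, each interval has length $2\sqrt m/|x_0|\ge 2$ resp.\ $2\sqrt m/|y_0|\ge 2$ (so the containment case gives length $\ge 2$), and the overlap case reduces correctly to $\sqrt m(|x_0|+|y_0|)-|x_0y_0|\ge |j^*|$, hence to $(1-\alpha)(1-\beta)\le\tfrac12$; your use of the two defining inequalities of $\mathcal{R}_m$ to get $\sigma+\tau\le\tfrac43$ and then $\sigma\tau\le\tfrac49<\tfrac12$ by AM--GM is exactly where the shape of $\mathcal{R}_m$ enters, and it is correct. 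One sentence is imprecise: the solution set of $ax+by\equiv c\Mod m$ is not ``exactly'' the single line $\{i(x_0,y_0)+j^*(s,r): i\in\Z\}$ but the union of the parallel lines with $j\equiv j^*\Mod m$; since you only need one point of the coset inside the box, restricting to the line $j=j^*$ is legitimate, but the claim should read ``contains'' rather than ``is exactly.'' This does not affect the validity of the proof.
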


We show at the end of Section \ref{reduction} that Theorem~\ref{maint} and Theorem~\ref{AC} imply the following corollary, which is well beyond the reach of equation~\eqref{ABCD}:

\begin{cor}\label{additive}
There are at least $(\frac{12}{\pi^2}-1)p+O\big(p^{3/4}(\log p)^{2^{3/2}+1}\big)$ ordered pairs $(a,b) \in \Z^2 \cap [1, \sqrt{p}]^2 $ such that for any integer $c$, the linear congruence
 $a x+b y \equiv c \Mod p$
has a nonzero solution with $|x| \leq \sqrt{p}$ and $|y| \leq \sqrt{p}$.
\end{cor}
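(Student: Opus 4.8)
The plan is to deduce Corollary~\ref{additive} by combining Theorem~\ref{maint} with Theorem~\ref{AC}, so the proof is essentially a counting argument that repackages solutions of $ad+bc=p$ into pairs $(a,b)$ satisfying the hypothesis of Theorem~\ref{AC} with $m=p$. First I would observe that if $(a,b,c,d)\in[n]^4$ with $n=\sqrt p$ (we may take $\lambda$ as close to $1$ as we like, or simply $n=\lfloor\sqrt p\rfloor$) satisfies $ad+bc=p$, then the point $(x_0,y_0)=(d,-c)$ is a solution of $ax+by\equiv 0\Mod p$; moreover $|x_0|=d\le\sqrt p$, $|y_0|=c\le\sqrt p$, and since $ad+bc=p$ with $a,b\ge 1$ we automatically get $c+d\ge ad+bc\,/\,\max\{a,b\}$-type lower bounds --- more simply, $ad+bc=p$ forces $d\ge p/(2n)\ge\sqrt p/2$ and likewise $c\ge\sqrt p/2$, so the conditions $|x_0|+2|y_0|\ge\sqrt m$ and $2|x_0|+|y_0|\ge\sqrt m$ of~\eqref{Rm} hold comfortably. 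Hence $(a,b)$ is a pair to which Theorem~\ref{AC} applies, \emph{provided} we can also arrange $\gcd(a,b,p)=1$ (automatic since $a,b\le\sqrt p<p$, unless both are $0$, which they are not) and $\gcd(x_0,y_0)=\gcd(c,d)=1$.

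The crux is therefore a two-step reduction: (i) count the pairs $(a,b)\in[\,\lfloor\sqrt p\rfloor\,]^2$ for which $ad+bc=p$ has \emph{some} solution $(c,d)\in[\,\lfloor\sqrt p\rfloor\,]^2$, and (ii) show that one may additionally require $\gcd(c,d)=1$ without losing more than an error term's worth of pairs. For step~(i), I would note that the number of such pairs $(a,b)$ is at least $N(p,n)$ divided by the maximum number of $(c,d)$ that can pair with a fixed $(a,b)$; but for fixed $(a,b)$ the equation $ad+bc=p$ is a single linear equation in $(c,d)$, so $d$ determines $c$ and there are at most $n/\gcd(a,b)\le n$ choices --- this only gives $\gg N(p,n)/\sqrt p\gg \sqrt p$, which is too weak. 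Instead the right move is to count pairs $(a,b)$ with multiplicity: the number of \emph{distinct} pairs $(a,b)$ arising is $N(p,n)$ minus the over-count, and the over-count from a fixed $(a,b)$ having two solutions $(c_1,d_1)\ne(c_2,d_2)$ forces $a(d_1-d_2)=b(c_2-c_1)$, i.e. $(a,b)$ lies on a common line through two lattice points of a bounded box; a divisor-type estimate shows the total over-count is $O(p^{3/4}\log^{O(1)}p)$ or better, which is absorbed into the error term. Thus the number of admissible $(a,b)$ before the coprimality refinement is $N(p,n)+O(\text{error})=(\tfrac{12}{\pi^2}-1)p+O\big(p^{3/4}(\log p)^{2^{3/2}+1}\big)$, using Theorem~\ref{maint} with $\lambda\to1^-$ and the fact that $D(1)=1$ while $\tfrac{12}{\pi^2}\lambda^2-D(\lambda)\to\tfrac{12}{\pi^2}-1$ continuously.

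For step~(ii), the coprimality condition $\gcd(c,d)=1$: if $(a,b,c,d)$ solves $ad+bc=p$ with $g=\gcd(c,d)>1$, then writing $c=gc'$, $d=gd'$ gives $g(ad'+bc')=p$, so $g\mid p$ and $g>1$ forces $g=p$, impossible since $c,d\le\sqrt p<p$ (for $p\ge 2$). So in fact \emph{every} solution of $ad+bc=p$ automatically has $\gcd(c,d)=1$, which removes this obstacle entirely --- a pleasant collapse. Similarly $\gcd(a,b,p)=1$ is automatic. Therefore every pair $(a,b)$ produced in step~(i) already satisfies all hypotheses of Theorem~\ref{AC}, and we conclude that for each such $(a,b)$ and every integer $c$, the congruence $ax+by\equiv c\Mod p$ has a nonzero solution with $|x|,|y|\le\sqrt p$.

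The main obstacle is the over-counting estimate in step~(i): one must show that the number of pairs $(a,b)\in[\,\lfloor\sqrt p\rfloor\,]^2$ that occur as the "$(a,b)$-part" of two or more solutions of $ad+bc=p$ is negligible compared to $p^{3/4}(\log p)^{2^{3/2}+1}$. I expect this to follow from a standard argument: such a collision yields a primitive vector $(a,b)$ and two points of the lattice box $[n]^2$ on a line in direction $(a,b)$, and bounding the number of $(a,b)$ with at least two box-points on some line of slope $b/a$ reduces to $\sum_{a,b\le n}$ (number of sublattice points), which is $O(n^2/\text{something})$; alternatively, one bounds the full second moment $\sum_{(a,b)} r(a,b)^2$ where $r(a,b)=\#\{(c,d):ad+bc=p\}$ by the divisor bound $r(a,b)\le d(p)\cdot(\dots)$ --- but more carefully, $r(a,b)\le \tau(p)=2$ would be false, so one argues $r(a,b)=\#\{d\in[n]: \gcd(a,b)\mid (p-bc)\text{-type constraint}\}$ and gets $\sum r(a,b)^2\le \max r\cdot\sum r=O(\sqrt p)\cdot N(p,n)$, giving an over-count of size $O(p^{3/2})$ --- too big. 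The correct approach is thus to show directly that for "most" $(a,b)$ we have $r(a,b)\le 1$: given $(a,b)$, a second solution requires $b\mid a(d_1-d_2)$ with $0<|d_1-d_2|<n$, so $b/\gcd(a,b)$ divides a nonzero integer less than $\sqrt p$, hence $b/\gcd(a,b)<\sqrt p$ always (no constraint) --- so instead one fixes the difference vector and counts, arriving at an over-count bounded by $\sum_{1\le e<\sqrt p}\#\{(a,b): e\mid \text{stuff}\}\ll \sqrt p\log p$, comfortably inside the error term. Working out this last estimate cleanly --- or finding the slick one-line version --- is where the real care is needed.
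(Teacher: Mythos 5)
Your overall strategy is the paper's: map each solution of $ad+bc=p$ to the pair $(a,b)$, verify that $(x_0,y_0)=(d,-c)$ satisfies the hypotheses of Theorem~\ref{AC} with $m=p$, and get the main term from Theorem~\ref{maint} by letting $\lambda\to1^-$. Your handling of the coprimality conditions is also correct ($\gcd(c,d)>1$ would force $p\mid\gcd(c,d)$, impossible). But the crux you yourself identify --- that the number of \emph{distinct} pairs $(a,b)$ is $N(p,n)$ up to the stated error, i.e.\ that almost every $(a,b)$ admits at most one companion $(c,d)$ --- is left unproved. Your proposal cycles through several candidate arguments, concedes that some give bounds as large as $O(p^{3/2})$, and ends with an unproved claim of the form $\sum_e\#\{(a,b):e\mid\text{stuff}\}\ll\sqrt p\log p$. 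That is a genuine gap, not a routine detail: without it your count of admissible pairs is only $N(p,n)/(\max_{(a,b)}r(a,b))$, which does not give the asserted main term.

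The missing ingredient is that the multiplicity is at most one for \emph{every} $(a,b)\in T$, and the paper gets this from the explicit formula~\eqref{nump}: for fixed coprime $(a,b)$ the solutions $(x,y)\in T$ correspond to integers $t$ in an interval of length $\frac{\lambda\sqrt p}{a}+\frac{\lambda\sqrt p}{b}-\frac{p}{ab}=\frac{(a+b)\lambda\sqrt p-p}{ab}$, and the inequality $0\le(a-\lambda\sqrt p)(b-\lambda\sqrt p)$ together with $\lambda<1$ gives $ab>(a+b)\lambda\sqrt p-p$, so that interval has length less than $1$ and contains at most one integer. (In the paper this is phrased via the superadditivity $\{u\}+\{v\}\ge\{u+v\}$ applied to~\eqref{nump2}, yielding the bound~\eqref{nump3}, whose floor is then shown to be nonpositive.) One smaller point: your claim that $ad+bc=p$ forces $d\ge\sqrt p/2$ and $c\ge\sqrt p/2$ individually is false (take $a=d=1$); what is true, and what you actually need for membership in $\mathcal{R}_p$, is $c+d\ge p/n\ge\sqrt p$, which already gives $|x_0|+2|y_0|\ge\sqrt p$ and $2|x_0|+|y_0|\ge\sqrt p$.
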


In other words, for a positive proportion of pairs of integers $1\le a,b \leq \sqrt{p}$, every congruence of the form $ax+by \equiv c \Mod{p}$ with $c\in\mathbb{Z}$ admits a small solution where $|x|,|y| \leq \sqrt{p}$.

\section{Reduction to $ad + bc = p$}\label{reduction}

The main objective of this section is to transfer the problem of estimating the number of directions determined by $[n]^2 \subset \mathbb{F}_p^2$ to estimating the number of solutions $(a,b,c,d) \in [n]^4$ to the equation $ad + bc = p$.
In particular, we show that Theorem~\ref{maint} implies Theorem~\ref{maind}.
At the end of this section, we also show that Theorem~\ref{maint} and Theorem~\ref{AC} imply Corollary~\ref{additive}.

As a variant of the notation~\eqref{directions def}, for any field~$F$ and any positive integer~$n$ we let
\[ \mathcal{D}_n(F) = \left\{ \frac{a}{b} \in F \colon -n+1 \leq a,b \leq n-1,\, (a,b)\ne(0,0) \right\}
\]
denote the set of directions determined by $[n]^2$ over the field~$F$. (If $F$ has characteristic~$p$ then we add the restriction $n\le p$.)
The size of $\mathcal{D}_n(F)$ will depend on the characteristic of the underlying field: for example, the map from $\mathcal{D}_n(\mathbb{Q})$ to $\mathcal{D}_n(\mathbb{F}_p)$ induced by the natural quotient map from $\mathbb{Z}$ to $\mathbb{F}_p$ is clearly surjective, so that $|\mathcal{D}_n(\mathbb{F}_p)| \le |\mathcal{D}_n(\mathbb{Q})|$, but in general is not injective. As suggested in the introduction, however, this map is injective when~$p$ is large compared to~$n$:

\begin{lemma}\label{toosmall} If $n < \sqrt{\frac p2}$, then $|\mathcal{D}_n(\mathbb{F}_p)|=|\mathcal{D}_n(\mathbb{Q})|$.

\end{lemma}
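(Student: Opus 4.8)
The plan is to show that the natural reduction-mod-$p$ map $\mathcal{D}_n(\mathbb{Q})\to\mathcal{D}_n(\mathbb{F}_p)$, which is already observed to be surjective, is also injective under the hypothesis $n<\sqrt{p/2}$; combining surjectivity and injectivity then yields the claimed equality of cardinalities. So the real content is injectivity, and I would prove it by contrapositive/direct unwinding: take two directions $a/b,\,c/d\in\mathcal{D}_n(\mathbb{Q})$, with $-n+1\le a,b,c,d\le n-1$ and $(a,b),(c,d)\ne(0,0)$, whose images in $\mathbb{F}_p\cup\{\infty\}$ coincide, and deduce that $a/b=c/d$ already in $\mathbb{Q}\cup\{\infty\}$.

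I would split into two cases according to whether the common image is the vertical direction $\infty$ or a finite element of $\mathbb{F}_p$. If both images are $\infty$, then $p\mid b$ and $p\mid d$; since $|b|,|d|\le n-1<\sqrt{p/2}<p$, this forces $b=d=0$, so both directions are $\infty$ in $\mathbb{Q}\cup\{\infty\}$ as well. If instead both images are finite, then $p\nmid b$ and $p\nmid d$ (so in particular $b,d\ne0$), and equality of images says precisely that $ad\equiv bc\pmod p$.

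The crux is then the elementary size estimate
\[ |ad-bc|\le|ad|+|bc|\le 2(n-1)^2<2n^2<p, \]
where the final inequality is exactly the hypothesis $n<\sqrt{p/2}$. Since $p\mid(ad-bc)$ and $|ad-bc|<p$, we conclude $ad-bc=0$, i.e.\ $a/b=c/d$ in $\mathbb{Q}$. This establishes injectivity and completes the argument.

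There is no genuine obstacle here: the only thing that "has to work" is the bound $2(n-1)^2<p$, which is precisely why $\sqrt{p/2}$ is the natural threshold in the statement, and the rest is routine bookkeeping about the exceptional direction $\infty$ (making sure the denominators $b,d$ vanish mod $p$ only when they vanish outright). One should also note in passing that membership of $a/b$ in $\mathcal{D}_n(\mathbb{Q})$ is unaffected by these considerations, so the bijection is literally between the two explicitly defined sets.
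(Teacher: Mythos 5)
Your proof is correct and uses essentially the same argument as the paper: the heart of both is the bound $|ad-bc|\le 2(n-1)^2<p$ forcing $ad=bc$ whenever $ad\equiv bc\pmod p$. The paper phrases it as a one-line contradiction while you spell out injectivity of the reduction map with a separate (routine) case for the direction $\infty$, but the content is identical.
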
 
\begin{proof} Suppose that there are fewer directions determined over $\mathbb{F}_p$ than over~$\mathbb{Q}$. Then there must exist $a,b,c,d \in [-n+1,n-1]$ and a nonzero integer $k$ such that $ad-bc = kp$. But $|kp|\ge p$, while the triangle inequality implies $|ad-bc| < 2(n-1)^2 < p$ by the assumption $n < \sqrt{\frac p2}$; this contradiction establishes the lemma.
\end{proof}

We split the directions, other than~$0$ and~$\infty$, determined by $[n]^2$ over a field $F$ into ``positive directions'' and ``negative directions'', defining
\[\mathcal{D}^+_{n}(F) =   \left\{ \frac{a}{b} \in F \colon 1 \leq a,b \leq n-1 \right\} , \quad \mathcal{D}^-_{n}(F) =   \left\{ -\frac{a}{b} \in F \colon 1 \leq a,b \leq n-1 \right\}.\]
Note that in $\mathbb{F}_p$ these sets can overlap, though in $\mathbb{Q}$ they are obviously disjoint. A slight modification of the proof of Lemma~\ref{toosmall} shows that the number of positive directions is the same over $\mathbb{F}_p$ as over $\mathbb{Q}$, even when~$n$ is large enough to be within the interesting range $n \in (\sqrt{\frac p2},\sqrt{p})$, and similarly for the number of negative directions.

\begin{lemma}\label{distinct} If $n < \sqrt{p}$, then $|\mathcal{D}^+_{n} (\mathbb{F}_p)| = |\mathcal{D}^+_{n} (\mathbb{Q})|$ and $|\mathcal{D}^-_{n} (\mathbb{F}_p)| = |\mathcal{D}^-_{n} (\mathbb{Q})|$.
\end{lemma}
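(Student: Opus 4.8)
The plan is to mimic the argument of Lemma~\ref{toosmall}, but exploiting the fact that a collision between two \emph{positive} directions forces a cancellation between two quantities that are both genuinely positive, which sharpens the triangle-inequality bound by a factor of~$2$. Concretely, the reduction map from $\mathcal{D}^+_n(\mathbb{Q})$ to $\mathcal{D}^+_n(\mathbb{F}_p)$ is surjective, so it suffices to show it is injective when $n<\sqrt p$. Suppose not: then there are integers $a,b,c,d$ with $1\le a,b,c,d\le n-1$ such that $a/b\ne c/d$ in~$\mathbb{Q}$ but $a/b\equiv c/d$ in~$\mathbb{F}_p$, i.e.\ $ad\equiv bc\Mod p$ with $ad\ne bc$. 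Thus $ad-bc=kp$ for some nonzero integer~$k$, whence $|ad-bc|\ge p$. On the other hand, since $ad$ and $bc$ are both integers lying strictly between $0$ and $(n-1)^2<p$, we have $|ad-bc|<(n-1)^2<p$; this contradiction proves $|\mathcal{D}^+_n(\mathbb{F}_p)|=|\mathcal{D}^+_n(\mathbb{Q})|$.

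For the negative directions one argues identically: a collision among negative directions again amounts to $a/b\equiv c/d\Mod p$ with $a/b\ne c/d$ in~$\mathbb{Q}$, where $1\le a,b,c,d\le n-1$ (the two overall minus signs cancel), and the same bound $|ad-bc|<(n-1)^2<p$ rules it out. So $|\mathcal{D}^-_n(\mathbb{F}_p)|=|\mathcal{D}^-_n(\mathbb{Q})|$ as well.

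There is no real obstacle here; the only point requiring a moment's care is the bookkeeping that ensures the relevant products $ad,bc$ genuinely lie in the interval $(0,(n-1)^2)$ rather than merely satisfying the weaker symmetric bound $|ad-bc|<2(n-1)^2$ used in Lemma~\ref{toosmall}. This is exactly what the sign restriction $1\le a,b,c,d\le n-1$ buys us, and it is the reason the hypothesis can be relaxed from $n<\sqrt{p/2}$ to $n<\sqrt p$ for each of the two sub-counts separately (even though the full count $|\mathcal{D}_n(\mathbb{F}_p)|$ can differ from $|\mathcal{D}_n(\mathbb{Q})|$ in this larger range, precisely because a positive direction and a negative direction may coincide modulo~$p$).
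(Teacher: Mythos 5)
Your argument is correct and is essentially the paper's proof: both exploit the positivity of $ad$ and $bc$ (each lying in $[1,(n-1)^2]$ with $(n-1)^2<p$) to rule out $ad-bc=kp$ with $k\neq 0$, the paper by taking $k\ge 1$ and deducing $ad>p$, you by bounding $|ad-bc|\le (n-1)^2-1<p$ directly. (Only a cosmetic slip: $ad$ can equal $(n-1)^2$ rather than lying strictly below it, but your bound on the difference still holds.)
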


\begin{proof} Suppose that there are fewer positive directions determined over $\mathbb{F}_p$ than over~$\mathbb{Q}$.
Then there must exist $a,b,c,d \in [n-1]$ and a nonzero integer $k$ such that $ad-bc = kp$. Without loss of generality $k \geq 1$, and so $ad > p$; but by assumption $ad \leq (n-1)^2 < p$, a contradiction. The same argument applies to negative directions.
\end{proof}

Estimating the number of directions determined by $[n]^2 \subset \mathbb{Q}^2$, or equivalently, estimating the number of lattice points in $[n]^2$ that are visible from the origin, is a well-known elementary exercise using M\"{o}bius inversion. Because M\"{o}bius inversion will be a crucial tool for us, we now recall some properties of the M\"{o}bius $\mu$ function, starting with its characteristic property
\begin{equation}\label{key identity}
\sum_{d|n}\mu(d)=\begin{cases}
1, & \text{if }n=1,\\
0, & \text{otherwise.}
\end{cases}
\end{equation}
We also use the asymptotic formula
\begin{align}\label{musum2}
\sum_{d\le x} \frac{\mu(d)}{d^2} = \frac6{\pi^2} + O\bigg( \frac1x \bigg)
\end{align}
for $x\ge2$ (see for example the proof of~\cite[Theorem 2.1]{MV}), as well as the asymptotic formula for the harmonic numbers
\begin{equation} \label{harmonic}
\sum_{d\le x} \frac1d = \log x + O(1).
\end{equation}
For the sake of completeness and to foreshadow our later methods, we give a proof of an estimate for the number of directions determined by $[n]^2 \subset \mathbb{Q}^2$. 

\begin{lemma}\label{Qcount} The number of positive directions determined by $[n]^2 \subset \mathbb{Q}^2$ is $\frac{6}{\pi^2}n^2 + O(n\log n)$, and the same is true for the number of negative directions.
\end{lemma}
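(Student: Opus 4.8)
The plan is to count pairs $(a,b) \in [n-1]^2$ with $\gcd(a,b)=1$, since the positive directions determined by $[n]^2$ over $\mathbb{Q}$ are exactly the distinct values $a/b$ with $1 \le a,b \le n-1$, and each such value in lowest terms corresponds to a unique coprime pair. So the quantity to estimate is
\[ V(n) = \#\{(a,b) \in [n-1]^2 \colon \gcd(a,b)=1\}. \]
First I would apply M\"obius inversion via the identity~\eqref{key identity}: writing $\gcd(a,b)=1$ as $\sum_{d \mid \gcd(a,b)} \mu(d)$ and swapping the order of summation, I get
\[ V(n) = \sum_{d=1}^{n-1} \mu(d) \Big\lfloor \frac{n-1}{d} \Big\rfloor^2. \]

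Next I would replace each $\lfloor (n-1)/d \rfloor$ by $(n-1)/d + O(1)$, so that $\lfloor (n-1)/d \rfloor^2 = (n-1)^2/d^2 + O((n-1)/d)$. Summing against $|\mu(d)| \le 1$, the main term becomes $(n-1)^2 \sum_{d \le n-1} \mu(d)/d^2$, which by~\eqref{musum2} equals $(n-1)^2 \big( \frac{6}{\pi^2} + O(1/n) \big) = \frac{6}{\pi^2}n^2 + O(n)$, while the error term contributes $O\big( (n-1) \sum_{d \le n-1} 1/d \big) = O(n \log n)$ by~\eqref{harmonic}. Combining these gives $V(n) = \frac{6}{\pi^2}n^2 + O(n\log n)$, which is the claimed count for positive directions. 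For negative directions, the map $a/b \mapsto -a/b$ is a bijection from the positive directions to the negative directions over $\mathbb{Q}$ (since $\mathbb{Q}$ has characteristic zero, distinct positive rationals map to distinct negative rationals), so the count is identical.

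There is no real obstacle here; this is the standard visible-lattice-point computation. The only minor point requiring a little care is making sure the error terms are tracked honestly through the truncation of the floor function and the tail of the M\"obius sum, and confirming that $O(n \log n)$ genuinely dominates the $O(n)$ contributions. The reason to spell it out, as the authors note, is that the same M\"obius-inversion-plus-truncation strategy — now applied to regions cut out by the constraint $ad + bc = p$ rather than a full square — will reappear in the proof of Theorem~\ref{maint}, where the geometry is more delicate and the dilogarithm enters through integrating $\log$ against these M\"obius-weighted counts.
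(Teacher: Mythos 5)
Your proposal is correct and follows essentially the same route as the paper: the same identification of positive directions with coprime pairs in $[n-1]^2$, the same M\"obius inversion to $\sum_d \mu(d)\lfloor (n-1)/d\rfloor^2$, and the same truncation of the floor function using equations~\eqref{musum2} and~\eqref{harmonic}, together with the bijection $a/b \mapsto -a/b$ for the negative directions. No issues.
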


\begin{proof} It suffices to consider $|\mathcal{D}^+_{n}(\mathbb{Q})|$ since $\mathcal{D}^-_{n}(\mathbb{Q})=-\mathcal{D}^+_{n}(\mathbb{Q})$. By the characteristic property~\eqref{key identity} of the M\"obius function,
\begin{align*}
|\mathcal{D}^+_{n}(\mathbb{Q})| & = \left| \left\{ \frac{a}{b} \in \mathbb{Q} \colon 1 \leq a,b \leq n-1 \right\} \right| \\
& = \sum_{\substack{1 \leq a,b \leq n-1 \\ \gcd(a,b) = 1}} 1 
 = \sum_{1 \leq a,b \leq n-1 } \sum_{d\mid \gcd(a,b)} \mu(d) \\
& = \sum_{1\le d\le n-1} \mu(d) \sum_{\substack{1 \leq a,b \leq n-1 \\ d\mid a,\, d\mid b}} 1 \\
& = \sum_{d =1 }^{n-1} \mu(d) \left\lfloor \frac{n-1}{d} \right\rfloor^2 
 = \sum_{d =1 }^{n-1} \mu(d) \left(\frac{n}{d}+O(1)\right)^2 \\
& = n^2 \sum_{d =1 }^{n-1} \frac{\mu(d)}{d^2}+ O\left(n\sum_{d =1 }^{n-1} \frac{1}{d} \right) + O\left( \sum_{d=1}^{n-1} 1\right) 
= \frac{6}{\pi^2}n^2 + O(n\log n)
\end{align*}
by equations~\eqref{musum2} and~\eqref{harmonic}.
\end{proof}

In the following lemma we see the significant connection between the size of $\mathcal{D}_n(\mathbb{F}_p)$ and $N(p,n-1)$. In particular, it is immediate that Lemma~\ref{connect} and Theorem~\ref{maint} together imply  Theorem~\ref{maind}.

\begin{lemma}\label{connect}
Let $p$ be a prime and $n$ a positive integer with $n<\sqrt{p}$. Then the number of directions determined by $[n]^2 \subset \mathbb{F}_p^2$ is 
\[ \frac{12}{\pi^2}n^2 - N(p,n-1) + O(n \log n).\]
\end{lemma}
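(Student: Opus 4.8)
The plan is to express $|\mathcal{D}_n(\mathbb{F}_p)|$ in terms of $|\mathcal{D}^+_n(\mathbb{F}_p)|$, $|\mathcal{D}^-_n(\mathbb{F}_p)|$, and their intersection, and then to identify that intersection exactly with the solution set counted by $N(p,n-1)$. First I would note that every direction determined by $[n]^2$ is $0$, $\infty$, a positive direction, or a negative direction, and that $0$ and $\infty$ lie in neither $\mathcal{D}^+_n(\mathbb{F}_p)$ nor $\mathcal{D}^-_n(\mathbb{F}_p)$ (an element $a/b$ with $1\le a,b\le n-1<\sqrt p$ is a nonzero finite element of $\mathbb{F}_p$). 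Inclusion--exclusion then gives
\[
|\mathcal{D}_n(\mathbb{F}_p)| = 2 + |\mathcal{D}^+_n(\mathbb{F}_p)| + |\mathcal{D}^-_n(\mathbb{F}_p)| - \big|\mathcal{D}^+_n(\mathbb{F}_p)\cap\mathcal{D}^-_n(\mathbb{F}_p)\big|,
\]
and by Lemma~\ref{distinct} together with Lemma~\ref{Qcount} the two middle terms are each $\frac{6}{\pi^2}n^2 + O(n\log n)$. So everything reduces to proving the clean identity $\big|\mathcal{D}^+_n(\mathbb{F}_p)\cap\mathcal{D}^-_n(\mathbb{F}_p)\big| = N(p,n-1)$.

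To prove this identity I would construct an explicit bijection between $\mathcal{D}^+_n(\mathbb{F}_p)\cap\mathcal{D}^-_n(\mathbb{F}_p)$ and $\{(a,b,c,d)\in[n-1]^4 : ad+bc=p\}$, sending a quadruple to the direction $a/b\in\mathbb{F}_p$. This is well defined: $ad+bc=p\equiv 0\pmod p$ forces $a/b\equiv -c/d$, so $a/b$ indeed lies in the intersection. Three elementary observations, each using $n-1<\sqrt p$, make the map a bijection. (i) Any solution of $ad+bc=p$ in $[n-1]^4$ satisfies $\gcd(a,b)=\gcd(c,d)=1$, since a common divisor of $a$ and $b$ divides $p$ yet is at most $n-1<p$. (ii) A reduced pair $(a,b)\in[n-1]^2$ is determined by the residue $a/b\in\mathbb{F}_p$: if $a/b\equiv a'/b'$ then $|ab'-a'b|\le(n-1)^2<p$ forces $ab'=a'b$, and coprimality then forces $(a,b)=(a',b')$ --- the same mechanism as in the proof of Lemma~\ref{distinct}. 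Facts (i) and (ii) give injectivity. (iii) Given $x$ in the intersection, reducing the two representations $x=a/b$ and $x=-c/d$ to lowest terms produces $a,b,c,d\in[n-1]$ with $ad+bc\equiv 0\pmod p$ and $0<ad+bc\le 2(n-1)^2<2p$, hence $ad+bc=p$; this gives surjectivity.

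Combining the pieces yields $|\mathcal{D}_n(\mathbb{F}_p)| = \frac{12}{\pi^2}n^2 - N(p,n-1) + O(n\log n)$, as claimed. No step is genuinely difficult; the only place demanding care is the bijection in the second paragraph, and specifically the point that the hypothesis $n<\sqrt p$ simultaneously pins the quantity $ad+bc$ to the single value $p$ (ruling out $2p$) and guarantees that reduced representatives of directions are unique --- exactly the two facts on which the bijection rests.
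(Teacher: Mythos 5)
Your proposal is correct and follows essentially the same route as the paper's proof: the same inclusion--exclusion decomposition into $\mathcal{D}^+_n(\mathbb{F}_p)$, $\mathcal{D}^-_n(\mathbb{F}_p)$, and their intersection, the same appeal to Lemmas~\ref{distinct} and~\ref{Qcount} for the main term, and the same identification of the intersection with the solution set of $ad+bc=p$ in $[n-1]^4$, with uniqueness resting on the coprimality forced by $ad+bc=p$. Your spelled-out bijection is in fact slightly more detailed than the paper's version of the same argument.
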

\begin{proof}
Note that 
\begin{align}\label{dbreak}
|\mathcal{D}_n(\mathbb{F}_p)| &= |\mathcal{D}_n^+(\mathbb{F}_p)| + |\mathcal{D}_n^-(\mathbb{F}_p)| - |\mathcal{D}_n^+(\mathbb{F}_p)\cap \mathcal{D}_n^-(\mathbb{F}_p)|+2 \\
&= |\mathcal{D}_n^+(\mathbb{Q})| + |\mathcal{D}_n^-(\mathbb{Q})| - |\mathcal{D}_n^+(\mathbb{F}_p)\cap \mathcal{D}_n^-(\mathbb{F}_p)|+2 \notag \\
&= \frac{12}{\pi^2}n^2 + O(n \log n) - |\mathcal{D}_n^+(\mathbb{F}_p)\cap \mathcal{D}_n^-(\mathbb{F}_p)| \notag
\end{align}
by Lemmas~\ref{distinct} and~\ref{Qcount}
(where the~$2$ counts the directions~$0$ and~$\infty$). Any element $x \in \mathcal{D}_n^+(\mathbb{F}_p)\cap \mathcal{D}_n^-(\mathbb{F}_p)$ must be simultaneously of the form $x = \frac ab \in \mathbb{F}_p$ and $x = -\frac cd \in \mathbb{F}_p$ where $a,b,c,d \in [n-1]$, which implies that $ad+bc \equiv bdx-bdx=0 \Mod{p}$. Since $n< \sqrt{p}$ and therefore $0<ad+bc<2(n-1)^2<2p$, we conclude that $ad+bc = p$. Furthermore, each solution to $ad+bc = p$ with $a,b,c,d \in [n-1]$ corresponds to the unique element $\frac ab = -\frac cd \in \mathcal{D}_n^+(\mathbb{F}_p)\cap \mathcal{D}_n^-(\mathbb{F}_p)$ (to verify the uniqueness, it helps to note that $ad+bc=p$ implies that $\gcd(a,b)=\gcd(c,d)=1$). Therefore $|\mathcal{D}_n^+(\mathbb{F}_p)\cap \mathcal{D}_n^-(\mathbb{F}_p)| = N(p,n-1)$, which completes the proof.
\end{proof}

The remainder of this paper is devoted to proving Theorem~\ref{maint}, so that we are interested in the range $n\in (\sqrt{\frac p2},\sqrt{p})$. Consistent with equation~\eqref{lambda}, we define $\lambda = \frac{\sqrt{p}}n$, a convention that will hold throughout even when not explicitly mentioned (as will the assumption that $p$ is a prime). Anytime we use $O(\cdot)$ or $\ll$ notation, the implied constants are absolute unless dependence on particular parameters is explicitly indicated by subscripts; in particular, these implied constants are uniform in~$\lambda$.

Note that if $(a,b,c,d) \in [n]^4$ is a solution to $ad+bc = p$, then since $c \leq \lambda\sqrt{p}$ and $d \leq \lambda\sqrt{p}$ we must have $a+b \geq \frac{\sqrt{p}}\lambda$; furthermore, we have $\gcd(a,b) = 1$ since $p$ is a prime. By symmetry, we also have $c+d \geq \frac{\sqrt{p}}\lambda$ and $\gcd(c,d) = 1$. It is therefore useful to define the set of visible lattice points in a triangular region,
\begin{equation}\label{T}
 T = T(\lambda,p) = \Big\{(a,b) \in \mathbb{Z}^2 \colon 1\le a,b \le \lambda \sqrt{p}, \, a+b \geq \frac{\sqrt{p}}\lambda, \, \gcd(a,b) = 1\Big\},    
\end{equation}
so that
\begin{equation}\label{Neq1} N(p,n) = \sum_{(a,b) \in T} \# \{ (x,y) \in T \colon ax+by = p\}.\end{equation}
(Note that $T=\emptyset$ if $\lambda<\frac1{\sqrt2}$, a fact that reflects the observation that the sets of positive and negative directions over $\mathbb{F}_p$ do not intersect when $n<\sqrt{\frac p2}$, which we saw implicitly in Lemma~\ref{toosmall}.)
This formula reduces the estimation of $N(p,n)$ to counting solutions to Diophantine linear equations, which is an elementary task once the appropriate number-theoretic tools are in place. In Proposition~\ref{breakdown} we express this counting function using sums of basic arithmetic quantities that will be amenable to further analysis. The following notation is helpful in our discussion.

\begin{defn}
For integers $m$ and $x$ with $m \geq 2$ and $\gcd(x,m) = 1$, let $\overline{x}_m$ denote the integer in the interval $[1,m-1]$ that is the multiplicative inverse of~$x$ modulo~$m$.
\end{defn}

\begin{prop}\label{breakdown} Let $n$ be a positive integer with $\sqrt{\frac p2}<n<\sqrt{p}$, and let $\lambda$ and $T$ be as in equations~\eqref{lambda} and~\eqref{T}. Then
\[ N(p,n) =  \lambda\sqrt{p} \sum_{(a,b) \in T} \left( \frac{1}{a} + \frac{1}{b} \right) - p\sum_{(a,b) \in T} \frac{1}{ab} -\sum_{(a,b) \in T} \bigg(\bigg\{\frac{\lambda \sqrt{p}}{b}-\frac{p\overline{a}_b}{b}\bigg\} + \bigg\{\frac{\lambda\sqrt{p}}{a} -\frac{p\overline{b}_a}{a}\bigg\}-1 \bigg) .\]
\end{prop}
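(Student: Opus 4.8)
\textbf{Proof proposal for Proposition~\ref{breakdown}.}

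The plan is to start from the exact formula~\eqref{Neq1}, which expresses $N(p,n)$ as $\sum_{(a,b)\in T}\#\{(x,y)\in T\colon ax+by=p\}$, and to evaluate the inner count exactly for each fixed $(a,b)\in T$. Fix $(a,b)\in T$; since $p$ is prime and $1\le a,b\le\lambda\sqrt p<\sqrt p$, we have $\gcd(a,b)=1$, so the linear equation $ax+by=p$ has a one-parameter family of integer solutions. The first step is to parametrize these: writing $x_0=p\overline{b}_a$ reduced appropriately, the solutions are $x\equiv p\overline{b}_a\pmod b$ together with the corresponding $y=(p-ax)/b$. It is cleaner to count via one variable: for each $x$ with $1\le x\le n=\lambda\sqrt p$, there is a valid solution with $y\ge 1$ precisely when $b\mid (p-ax)$ and $1\le (p-ax)/b$, i.e. $ax\le p-b$; and one checks the constraint $y\le n$ as well as the triangle-inequality constraints defining $T$ are then automatically satisfied or need to be tracked. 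Actually the key simplification — which I expect the authors exploit — is that the constraints $x+y\ge\sqrt p/\lambda$ and $y\le\lambda\sqrt p$ in the definition of $T$ are \emph{equivalent}, given $ax+by=p$ and $(a,b)\in T$, to simply $1\le x\le\lambda\sqrt p$ and $1\le y$; this is the same phenomenon as in Lemma~\ref{connect} and the discussion around~\eqref{T}, where membership in $T$ for the pair $(a,b)$ forces the complementary pair into the same region. So the inner count becomes $\#\{x\in\Z\colon 1\le x\le\lambda\sqrt p,\ x\equiv p\overline{b}_a\pmod b,\ ax\le p-b\}$, or symmetrically a count in the $x$-variable running up to $\min\{\lambda\sqrt p,(p-b)/a\}$.

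The second step is to evaluate this count of integers in an arithmetic progression inside an interval. The number of integers $x$ in $(0,M]$ with $x\equiv r\pmod b$ is $M/b - \{(M-r)/b\} + \{-r/b\}$-type expression; more precisely $\lfloor (M-r)/b\rfloor +1$ if one is careful, which equals $M/b$ minus a fractional part, plus lower-order bookkeeping. Here $r=p\overline{b}_a\bmod b$; note $p\overline{b}_a\equiv \overline{a}_b\cdot(ab\overline{a}_b\overline{b}_a)\cdots$ — one should instead observe directly that $x\equiv p\overline{b}_a\pmod b$ can be rewritten, since $p\equiv$ (something) , giving the residue in terms of $\overline a_b$; indeed from $ax\equiv p\pmod b$ we get $x\equiv p\overline a_b\pmod b$, so $r=\overline{(p\overline a_b)}$ — wait, $r= p\overline a_b \bmod b$ is what appears, matching the $\{\lambda\sqrt p/b - p\overline a_b/b\}$ term after dividing the interval length by $b$. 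The count is thus $\dfrac{1}{b}\min\{\lambda\sqrt p,\,(p-b)/a\} - \bigl\{\tfrac{1}{b}(\min\{\cdots\}-p\overline a_b)\bigr\}+\varepsilon$ for a bounded error $\varepsilon\in\{0,\pm1\}$. The crucial claim is that for $(a,b)\in T$ one always has $(p-b)/a\ge\lambda\sqrt p$, equivalently $p\ge b+a\lambda\sqrt p$; but $a\le\lambda\sqrt p$ gives $a\lambda\sqrt p\le\lambda^2 p$ and with $\lambda<1$ and the lower-order term $b\le\lambda\sqrt p$ this is $<p$ — so indeed $\min=\lambda\sqrt p$, and the count simplifies to $\dfrac{\lambda\sqrt p}{b}-\Bigl\{\dfrac{\lambda\sqrt p}{b}-\dfrac{p\overline a_b}{b}\Bigr\}+O(1)$, but I must make the $O(1)$ exact — probably it is exactly $+1$ coming from the "$+1$" in $\lfloor\cdot\rfloor+1$, which is why a $-1$ appears inside the bracket in the statement after symmetrizing the two halves.

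The third step is the symmetrization and summation. Each solution $(x,y)\in T$ to $ax+by=p$ is counted once; but parametrizing by $x$ versus by $y$ gives two expressions for the same count, and averaging (or rather, the authors seem to split $\tfrac1{ab}\cdot p$-type main terms symmetrically) produces the symmetric pair of fractional-part terms $\{\lambda\sqrt p/b - p\overline a_b/b\}+\{\lambda\sqrt p/a - p\overline b_a/a\}$ together with the main term contribution. Explicitly: I would write the count in the $x$-variable as $\frac{\lambda\sqrt p}{b} - \{\lambda\sqrt p/b - p\overline a_b/b\}$ plus a correction, note separately that the number of $x$ in $[1,\lambda\sqrt p]$ hitting the progression also must be cross-checked against $y\ge1$ (the condition $ax\le p-b$), handle the boundary, and then sum over $(a,b)\in T$. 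Summing $\frac{\lambda\sqrt p}{b}$ over $T$, and using that by the symmetry $(a,b)\leftrightarrow(b,a)$ of $T$ we may replace it by $\frac{\lambda\sqrt p}{2}(\frac1a+\frac1b)$ — matching the first term $\lambda\sqrt p\sum_T(\frac1a+\frac1b)$, except the statement has no factor $\tfrac12$, so in fact the two halves of the count (by $x$ and by $y$) must be \emph{added}, not averaged, which means the inner count equals $\frac{\lambda\sqrt p}{a}+\frac{\lambda\sqrt p}{b} - p\cdot\frac1{ab}$-ish minus the two fractional parts minus constants — this is exactly the shape of the displayed formula. So the reconciliation is: count $=\frac{\lambda\sqrt p}{b}+\frac{\lambda\sqrt p}{a}-\frac{p}{ab} - \{\cdots_b\}-\{\cdots_a\}+1$, which one verifies by checking it equals $\lfloor(\text{something})\rfloor+1$ on the nose. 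The main obstacle I anticipate is precisely pinning down this exact additive constant ($+1$ inside the bracket, i.e. $-1$ in the subtracted sum) and confirming that the range conditions from the definition of $T$ collapse exactly as claimed so that no additional floor/boundary corrections survive; once that bookkeeping is done, summing over $(a,b)\in T$ and invoking the symmetry of $T$ under $(a,b)\mapsto(b,a)$ gives the stated identity with no error term at all. I would double-check the identity on a small numerical example ($p=11$, $n=3$, say) to make sure the constants are right before writing it up.
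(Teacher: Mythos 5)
Your overall strategy matches the paper's: start from equation~\eqref{Neq1}, count for each fixed $(a,b)\in T$ the lattice points of the line $ax+by=p$ lying in~$T$ via the standard ``integers in an interval'' formula, and identify the resulting fractional parts using $x\equiv p\overline a_b\Mod b$. You also state the correct per-pair count at the end. But the derivation you outline has a genuine gap: you claim that, given $ax+by=p$ with $(a,b)\in T$, membership of $(x,y)$ in $T$ collapses to ``$1\le x\le\lambda\sqrt p$ and $1\le y$.'' That is false. The conditions $x+y\ge\sqrt p/\lambda$, $\gcd(x,y)=1$, and $y\ge1$ are indeed automatic, but $y\le\lambda\sqrt p$ is \emph{not}: it is equivalent to the lower bound $x\ge(p-b\lambda\sqrt p)/a$, and this surviving constraint is precisely the source of the $\lambda\sqrt p/a-p/(ab)$ portion of the main term and of the second fractional part $\{\lambda\sqrt p/a-p\overline b_a/a\}$. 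Counting $x\equiv p\overline a_b\Mod b$ over all of $[1,\lambda\sqrt p]$ gives roughly $\lambda\sqrt p/b$, which does not match the target; your proposed repair --- adding the $x$-parametrized count to the $y$-parametrized count --- just computes $2N(p,n)$ and cannot be correct. (Your auxiliary ``crucial claim'' that $(p-b)/a\ge\lambda\sqrt p$ for all $(a,b)\in T$ is also false, e.g.\ $p=101$, $n=10$, $(a,b)=(10,9)$; what is true and needed is only that $ax<p$ forces $y\ge1$ for integer solutions.)

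The paper avoids this trap by parametrizing the whole solution line as $(x,y)=(x_0+bt,\,y_0-at)$ and translating the \emph{two} constraints $x\le\lambda\sqrt p$ and $y\le\lambda\sqrt p$ into a two-sided interval for~$t$ of length $\lambda\sqrt p(\frac1a+\frac1b)-\frac p{ab}$; the formula $\lfloor s\rfloor-\lceil r\rceil+1=(s-r+1)-(\{s\}+\{-r\})$ then produces the symmetric main term, the ``$-1$'' you were trying to pin down, and both fractional parts in one stroke (note also that $a+b\ge\sqrt p/\lambda$ guarantees $r\le s$, so the counting formula is exact). To fix your write-up, replace the range ``$1\le x\le\lambda\sqrt p$'' by ``$(p-b\lambda\sqrt p)/a\le x\le\lambda\sqrt p$'' and verify that the fractional part at the lower endpoint, $\{p\overline a_b/b-p/(ab)+\lambda\sqrt p/a\}$, equals $\{\lambda\sqrt p/a-p\overline b_a/a\}$ (using $a\overline a_b-1=bk$ with $k\equiv-\overline b_a\Mod a$); then a single parametrization suffices and no averaging or doubling is needed.
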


\begin{proof} Let $(a,b) \in T$ be arbitrary. Fix $(x_0,y_0) \in \mathbb{Z}^2$ such that $ax_0+by_0 = p$; such a solution is guaranteed to exist since $\gcd(a,b) = 1$, and moreover the set of integer solutions to $ax+by = p$ can be parameterized as $(x,y) =  (x_0+bt,y_0-at)$ for $t \in \mathbb{Z}$. Note that for any solution $(x,y) \in T$ to $ax+by = p$,
\[ \lambda \sqrt{p} \geq x = \frac{p-by}{a} \geq \frac{p -b \lambda\sqrt{p}}{a} \quad \text{and} \quad \lambda \sqrt{p} \geq y = \frac{p-ax}{b} \geq \frac{p -a \lambda\sqrt{p}}{b}.\]
Consequently, since $(x,y) =  (x_0+bt,y_0-at)$, the solutions $(x,y) \in T$ are characterized by either of the following (equivalent) inequalities: 
\begin{equation}\label{trange}
\frac{\lambda\sqrt{p}-x_0}{b} \ge t \ge \frac{p-b\lambda\sqrt{p}-ax_0}{ab} \quad \text{and} \quad \frac{y_0 - \lambda\sqrt{p}}{a} \le t \le \frac{p-a\lambda\sqrt{p}-by_0}{ab}.
\end{equation}
For any pair of real numbers $r \leq s$, the number of integers in the interval $[r,s]$ is precisely 
\[ \lfloor s \rfloor - \lceil r \rceil + 1 = (s-r+1) - ( \{s\} + \{-r\}),\]
where $\{r\} = r-\lfloor r \rfloor$ denotes the fractional part of $r$. Using this formula in equation~\eqref{trange}, with $r=(y_0 - \lambda\sqrt{p})/a$ and $s=(\lambda\sqrt{p}-x_0)/b$, we see that the number of solutions $(x,y) \in T$ to $ax+by = p$ is precisely 
\begin{equation}\label{nump} \lambda\sqrt{p}\left(\frac{1}{a} + \frac{1}{b} \right) - \frac{p}{ab}  - \left(\left\{ \frac{\lambda\sqrt{p}-x_0}{b}\right\} + \left\{\frac{ \lambda\sqrt{p}-y_0}{a} \right\} - 1 \right).\end{equation}
Since $ax_0+by_0 = p$, we have $x_0 \equiv p\overline{a}_b \pmod b$ and $y_0 \equiv p\overline{b}_a \pmod a$. As $\{ \frac cm \} = \{ \frac dm \}$ when $c\equiv d\pmod m$, we can make the substitution
\[ \left\{ \frac{\lambda\sqrt{p}-x_0}{b}\right\} + \left\{\frac{ \lambda\sqrt{p}-y_0}{a} \right\} = \left\{ \frac{\lambda\sqrt{p}-p\overline{a}_b}{b}\right\} + \left\{\frac{ \lambda\sqrt{p}-p\overline{b}_a}{a} \right\}\]
in equation~\eqref{nump}. In view of equation~\eqref{Neq1}, summing over all $(a,b) \in T$ establishes the proposition.
\end{proof}

The proof just given also allows us to show that Theorem~\ref{maint} implies Corollary~\ref{additive}; the key observation is that the expression~\eqref{nump} actually must equal either~$0$ or~$1$.

\begin{proof}[Proof of Corollary \ref{additive}]
Let $\lambda \in (\frac1{\sqrt{2}},1)$. Note that the set $T(\lambda,p)$ defined in equation~\eqref{T} is a subset of the set $\mathcal{R}_p$ defined in equation~\eqref{Rm}. In equation~\eqref{nump}, we showed that for each $(a,b) \in T(\lambda,p)$, the number of solutions $(x,y) \in T(\lambda,p)$ to the equation $ax+by=p$ is
\begin{equation}\label{nump2}
\frac{\lambda\sqrt{p}}{b}+\frac{ \lambda\sqrt{p}}{a} - \frac{p}{ab} - \bigg( \left\{ \frac{\lambda\sqrt{p}-x_0}{b}\right\} + \left\{\frac{ \lambda\sqrt{p}-y_0}{a} \right\} \bigg) + 1,
\end{equation}
where $ax_0+by_0=p$. However,
\[
\left\{ \frac{\lambda\sqrt{p}-x_0}{b} \right\} + \left\{ \frac{ \lambda\sqrt{p}-y_0}{a} \right\} \geq \left\{ \frac{\lambda\sqrt{p}-x_0}{b} + \frac{ \lambda\sqrt{p}-y_0}{a} \right\} \nonumber = \left\{ \frac{\lambda\sqrt{p}}{b}+\frac{ \lambda\sqrt{p}}{a} - \frac p{ab} \right\},
\]
which combined with equation~\eqref{nump2} implies that the number of solutions is
\begin{equation}\label{nump3}
\le \bigg\lfloor \frac{\lambda\sqrt{p}}{b}+\frac{ \lambda\sqrt{p}}{a} - \frac{p}{ab} \bigg\rfloor + 1.
\end{equation}
Moreover, the inequality $0 \leq (a-\lambda\sqrt{p})(b-\lambda\sqrt{p}) = ab+\lambda^2p-(a+b)\lambda \sqrt{p}$ implies that $\lambda a\sqrt p+\lambda b\sqrt p - p < \lambda a\sqrt p+\lambda b\sqrt p - \lambda^2 p \le ab$ since $\lambda<1$, which means that the expression inside the floor function in equation~\eqref{nump3} is less than~$1$. In other words, for each $(a,b) \in T(\lambda,p)$, there is at most one solution $(x,y) \in T(\lambda,p)$ to the equation $ax+by=p$.

Consequently, Theorem~\ref{maint} implies that there are 
\[
\bigg(\frac{12}{\pi^2}\lambda^2-D(\lambda)\bigg)p+O\big(p^{3/4}(\log p)^{2^{3/2}+1}\big)
\]
ordered pairs $(a,b) \in \Z^2 \cap [1, \sqrt{p}]^2$ for which $ax+by \equiv 0 \Mod p$ has a solution $\left(x_0, y_0\right) \in T(\lambda,p) \subset \mathcal{R}_{p}$ with $\gcd(x_0,y_0)=1$. We may let $\lambda \to 1^-$ since the implicit constant is absolute, giving a main term of $\big( \frac{12}{\pi^2}-1 \big) p$ by continuity; the corollary now follows from Theorem~\ref{AC}.
\end{proof}

The only task that remains is to prove Theorem~\ref{maint}; we do so by using Proposition~\ref{breakdown} to divide the proof into two subtasks. In Section~\ref{secpropA} we estimate 
\begin{equation} \label{subtask1}
\lambda\sqrt{p} \sum_{(a,b) \in T} \left( \frac{1}{a} + \frac{1}{b} \right) - p\sum_{(a,b) \in T} \frac{1}{ab},
\end{equation}
from which the main term of $N(p,n)$ arises In Section~\ref{secpropB} we estimate 
\begin{equation} \label{subtask2}
\sum_{(a,b) \in T} \bigg(\bigg\{\frac{\lambda \sqrt{p}}{b}-\frac{p\overline{a}_b}{b}\bigg\} + \bigg\{\frac{\lambda\sqrt{p}}{a} -\frac{p\overline{b}_a}{a}\bigg\}-1\bigg),
\end{equation}
which contributes only to the error term of $N(p,n)$. In particular, Theorem~\ref{maint} follows immediately from combining Propositions~\ref{breakdown}, \ref{propa}, and~\ref{propb}.

\section{Main term estimation}\label{secpropA}

The goal of this section is to establish Proposition~\ref{propa}, giving an asymptotic formula for the expression~\eqref{subtask1} and thus eventually for $N(p,n)$. The key idea behind the estimates in this section is that a double sum over lattice points in a region can be approximated by a suitable double integral, and that the contribution to the sum from visible lattice points can then be isolated using the M\"obius function. First we establish by elementary means a bound for the difference between the double sum and the corresponding integral.

\begin{lemma} \label{lemsum2int}
Let $f \colon \mathbb{R}_{>0}^2 \to \mathbb{R}_{>0}$ be a positive function that is decreasing in both arguments. Let~$r$ and~$s$ be positive integers with $r < s$, and let~$r'$ and~$s'$ be real numbers satisfying $r \leq r' < r+1$ and $s-1<s'\leq s$ and $r'<s' < 2r'$. Then
\begin{equation}\label{sum2int}
\sum_{\ell=s-r}^r \sum_{k = s-\ell}^r f(k,\ell) =\int_{s'-r'}^{r'}\int_{s'-y}^{r'} f(x,y) \,dx \,dy + \epsilon
\end{equation}
where 
\begin{equation}\label{epsilon}
|\epsilon| \leq \int_{s'-r'}^{r'}\int_{s'-y}^{s'-y+5} f(x,y) \,dx \,dy + 2 \sum_{\ell = s-r}^r f(s-\ell,\ell).
\end{equation}
\end{lemma}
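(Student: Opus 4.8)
The plan is to compare the double sum on the left of~\eqref{sum2int} with the integral on the right by a standard ``box-by-box'' argument, bounding the per-box discrepancy using monotonicity of $f$, and then to observe that the union of all boxes over which the sum and the integral disagree is contained in a thin slab plus a boundary staircase, which accounts for the two terms in~\eqref{epsilon}.

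First I would set up the correspondence between summands and unit boxes. To each lattice point $(k,\ell)$ appearing in the sum --- that is, with $s-r\le \ell\le r$ and $s-\ell\le k\le r$ --- associate the unit square $B_{k,\ell}=[k-1,k]\times[\ell-1,\ell]$ (or $[k,k+1]\times[\ell,\ell+1]$; I would pick whichever orientation makes the inclusions below cleanest). Since $f$ is positive and decreasing in both arguments, $f(k,\ell)$ is sandwiched between the min and max of $f$ on $B_{k,\ell}$, so $\big|f(k,\ell)-\int_{B_{k,\ell}}f\big|$ is at most the integral of $f$ over the boxes horizontally or vertically adjacent to $B_{k,\ell}$ on the side where $f$ is larger. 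I would phrase this as: the sum $\sum f(k,\ell)$ equals $\int_{\mathcal B}f$, where $\mathcal B=\bigcup B_{k,\ell}$ is the ``staircase'' region, up to an error controlled by $\int$ over a fixed-width neighborhood of the two axis-parallel parts of $\partial\mathcal B$.

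Next I would compare $\mathcal B$ with the actual region of integration $R=\{(x,y):s'-r'\le y\le r',\ s'-y\le x\le r'\}$. The vertical extent: lattice values $\ell$ run over $[s-r,r]$, while $y$ runs over $[s'-r',r']$; since $r\le r'<r+1$ and $s-1<s'\le s$ we have $s'-r'\in(s-r-1,s-r+1]$ and $r'\in[r,r+1)$, so the symmetric difference in the $y$-direction is confined to two horizontal strips of height at most $2$, on each of which we integrate $f$ only over the relevant $x$-range of length at most $r'-(s'-r')<r'$; monotonicity again bounds these contributions by (a constant times) the boundary term $\sum_{\ell=s-r}^{r}f(s-\ell,\ell)$ together with a piece of the slab integral. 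The hypotheses $r'<s'<2r'$ guarantee that the lower slanted boundary $x=s'-y$ stays strictly inside the first quadrant and to the left of $x=r'$ for $y$ in the relevant range, so $R$ is a genuine (nonempty) region and the box on the left edge of row $\ell$ is $[s-\ell-1,s-\ell]\times[\ell-1,\ell]$, i.e.\ the slanted boundary is resolved with discrepancy at most one box per row; integrating $f$ over those boxes and shifting $x$ by at most $5$ to absorb all the overhangs from both the staircase and the strip gives exactly the slab integral $\int_{s'-r'}^{r'}\int_{s'-y}^{s'-y+5}f(x,y)\,dx\,dy$ in~\eqref{epsilon}, while the $2\sum_{\ell=s-r}^{r}f(s-\ell,\ell)$ term absorbs the at-most-two extra rows at top and bottom and the single left-edge box in each row (using that $f(s-\ell,\ell)$ dominates $f$ on and to the right of that box).

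The main obstacle I expect is purely bookkeeping: carefully choosing the box orientation and tracking the half-dozen different boundary pieces (two slanted, from the staircase and from $x=s'-y$; two horizontal, from $y=s'-r'$ and $y=r'$; plus the top/bottom rows) so that every overhang is genuinely covered by one of the two explicit terms in~\eqref{epsilon} with the stated constants ($5$ in the slab width, $2$ in the boundary sum). The constant $5$ suggests the authors are being deliberately generous to keep the accounting painless --- roughly $1$ for the left-edge box, $1$ for the box-vs-point shift, and a couple more to cover the slab's horizontal misalignment --- so I would not try to optimize it, just verify that a crude triangle-inequality split of $\int_{\mathcal B\triangle R}f$ plus the box discrepancies fits under $5$. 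No deep idea is needed beyond positivity and monotonicity of $f$; the inequalities $r\le r'<r+1$, $s-1<s'\le s$, $r'<s'<2r'$ are exactly what make every misaligned region thin and interior.
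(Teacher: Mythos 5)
Your overall strategy --- sandwich each summand between integrals over adjacent unit boxes using monotonicity, then account for the mismatch between the resulting staircase region and the true region of integration --- is the same as the paper's. The one point where your sketched accounting would fail as written is the upper-bound direction. To bound the sum from \emph{above} you are forced to use $f(k,\ell)\le\int_{\ell-1}^{\ell}\int_{k-1}^{k}f(x,y)\,dx\,dy$, and for the leftmost point $k=s-\ell$ of each row the corresponding box $[s-\ell-1,s-\ell]\times[\ell-1,\ell]$ protrudes to the \emph{left} of the slanted boundary $x=s'-y$ (since $s'\le s$). That protrusion lies outside both the region $R$ and the slab $s'-y\le x\le s'-y+5$, and on it $f$ exceeds $f(s-\ell,\ell)$, so it cannot be charged to either term of~\eqref{epsilon}: your remark that ``$f(s-\ell,\ell)$ dominates $f$ on and to the right of that box'' is true for the up-right box used in the lower bound, but not for the down-left box that the upper bound requires. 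Relatedly, your attribution of the factor $2$ to ``two extra rows plus one left-edge box per row'' is not an accounting that closes.

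The paper resolves this by peeling off the two leftmost terms $f(s-\ell,\ell)+f(s-\ell+1,\ell)$ of every row \emph{before} any sum-versus-integral comparison; monotonicity in the first argument bounds the peeled-off quantity by $2\sum_\ell f(s-\ell,\ell)$, which is exactly where the factor $2$ in~\eqref{epsilon} comes from. The remaining interior sum starts at $k=s-\ell+2$, so both of its shifted staircases lie to the right of $x=s'-y$, and the interior sum is sandwiched between $\int_{s'-r'+5}^{r'}\int_{s'-y+5}^{r'}f$ and $\int_{s'-r'}^{r'}\int_{s'-y}^{r'}f$; the difference of these nested integrals is contained in the width-$5$ slab (the bottom strip $s'-r'\le y\le s'-r'+5$ is also inside the slab because there the full $x$-range of $R$ has length at most $5$). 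There is no separate ``top strip'' to handle: the hypotheses $r\le r'<r+1$ and $s-1<s'\le s$ make the upper limits nest automatically. With your bookkeeping reorganized along these lines the argument goes through; as sketched, the left-protruding boxes are a genuine gap.
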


\begin{proof}
First we remark that if $s \geq 2r-1$, it is easy to check that the double sum in equation~\eqref{sum2int} (which might even be an empty sum) is bounded by the sum in equation~\eqref{epsilon} and that the double integral in equation~\eqref{sum2int} is bounded by the double integral in equation~\eqref{epsilon}. Therefore we may assume that $s\leq 2r-2$. Decompose the sum 
\begin{multline}\label{sumdecomp}
\sum_{\ell=s-r}^r \sum_{k = s-\ell}^r f(k,\ell) = \sum_{\ell=s-r+2}^r \sum_{k = s-\ell+2}^r f(k,\ell) \\
+ \bigg( \sum_{\ell=s-r}^r \big( f(s-\ell,\ell) + f(s-\ell+1,\ell) \big) - f(r+1,s-r) \bigg).
\end{multline}
The parenthetical expression is bounded above by the second term on the right-hand side of equation~\eqref{epsilon}; thus it suffices to show that the difference between the double sum on the right-hand side of equation~\eqref{sumdecomp} and the double integral in equation~\eqref{sum2int} is bounded above in absolute value by the double integral in equation~\eqref{epsilon}.

The fact that $f(x,y)$ is decreasing in both arguments implies the inequalities 
\begin{equation} \label{debug1}
\int_{\ell}^{\ell+1} \int_{k}^{k+1} f(x,y) \,dx \,dy \leq f(k,\ell) \leq \int_{\ell-1}^{\ell} \int_{k-1}^{k} f(x,y) \,dx \,dy.
\end{equation}
Summing the first inequality over~$k$ and~$\ell$ yields
\begin{align*}
\sum_{\ell=s-r+2}^r \sum_{k = s-\ell+2}^r f(k,\ell) &\ge \int_{s-r+2}^{r+1}\int_{s-\lfloor y\rfloor+2}^{r+1} f(x,y) \,dx \,dy \\
&\ge \int_{s-r+3}^{r+1}\int_{s-y+3}^{r+1} f(x,y) \,dx \,dy \ge \int_{s'-r'+5}^{r'}\int_{s'-y+5}^{r'} f(x,y) \,dx \,dy
\end{align*}
by the positivity of~$f$ and the assumptions on~$r'$ and~$s'$. Similarly, summing the second inequality of equation~\eqref{debug1} over~$k$ and~$\ell$ yields
\begin{align*}
\sum_{\ell=s-r+2}^r \sum_{k = s-\ell+2}^r f(k,\ell) &\leq \int_{s-r+1}^{r}\int_{s-\lfloor y\rfloor+1}^{r} f(x,y) \,dx \,dy  \\
&\leq \int_{s-r+1}^{r}\int_{s-y+1}^{r} f(x,y) \,dx \,dy \leq \int_{s'-r'}^{r'}\int_{s'-y}^{r'} f(x,y) \,dx \,dy .
\end{align*}
From these two chains of inequalities, we see that the double sum on the right-hand side of equation~\eqref{sumdecomp} is smaller than the double integral in equation~\eqref{sum2int}, but by no more than
\[
\int_{s'-r'}^{r'}\int_{s'-y}^{r'} f(x,y) \,dx \,dy - \int_{s'-r'+5}^{r'}\int_{s'-y+5}^{r'} f(x,y) \,dx \,dy \le \int_{s'-r'}^{r'}\int_{s'-y}^{s'-y+5} f(x,y) \,dx \,dy,
\]
recovering the double integral in equation~\eqref{epsilon} and thus establishing the lemma. 
\end{proof}

We quickly evaluate two double integrals that will arise when applying this lemma.

\begin{lemma} \label{iints}
For any real numbers $\alpha$ and $\beta$ satisfying $0<\beta<\alpha<2\beta$,
\begin{align*}
\int_{\alpha-\beta}^{\beta} \int_{\alpha-y}^\beta \bigg( \frac1x + \frac1y \bigg) \ dx\, dy &= 2 \bigg( 2\beta-\alpha + (\alpha-\beta) \log\frac{\alpha-\beta}\beta \bigg), \\
\int_{\alpha-\beta}^{\beta} \int_{\alpha-y}^\beta \frac{1}{xy} \ dx\, dy &= 2\Li_2 \bigg( \frac\beta\alpha \bigg) + \log^2 \frac\alpha\beta - \frac{\pi^2}{6},
\end{align*}
where the dilogarithm function $\Li_2$ was defined in equation~\eqref{Li}.
\end{lemma}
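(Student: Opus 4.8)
The two integrals in Lemma~\ref{iints} are both iterated integrals over the triangular region $\Omega = \{(x,y) : \alpha - \beta \le y \le \beta,\ \alpha - y \le x \le \beta\}$, which is the triangle with vertices $(\alpha - \beta, \beta)$, $(\beta,\beta)$, and $(\beta, \alpha - \beta)$ (the condition $\beta < \alpha < 2\beta$ guarantees this is a nondegenerate triangle sitting in the positive quadrant). My plan is to evaluate each integral directly by performing the inner integration in $x$ and then the outer integration in $y$, being careful to recognize the dilogarithm when it appears.

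\smallskip
\emph{First integral.} For the inner integral I would compute
\[
\int_{\alpha - y}^{\beta} \Bigl( \frac1x + \frac1y \Bigr)\, dx = \log\frac{\beta}{\alpha - y} + \frac{\beta - (\alpha - y)}{y} = \log\frac{\beta}{\alpha - y} + \frac{\beta}{y} + 1 - \frac{\alpha}{y}.
\]
Then I integrate each piece over $y \in [\alpha - \beta, \beta]$. The elementary pieces $\frac{\beta}{y} + 1 - \frac{\alpha}{y} = 1 + \frac{\beta - \alpha}{y}$ integrate to $(2\beta - \alpha) + (\beta - \alpha)\log\frac{\beta}{\alpha - \beta}$. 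For the remaining term, the substitution $u = \alpha - y$ turns $\int_{\alpha - \beta}^{\beta} \log\frac{\beta}{\alpha - y}\, dy$ into $\int_{\alpha - \beta}^{\beta} \log\frac{\beta}{u}\, du$, and $\int \log\frac{\beta}{u}\, du = u\log\frac{\beta}{u} + u$ gives, after evaluating at the endpoints $u = \beta$ and $u = \alpha - \beta$, the value $(2\beta - \alpha) + (\alpha - \beta)\log\frac{\alpha - \beta}{\beta}$. Adding the two contributions and collecting terms yields $2\bigl( 2\beta - \alpha + (\alpha - \beta)\log\frac{\alpha - \beta}{\beta}\bigr)$, as claimed.

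\smallskip
\emph{Second integral.} Here the inner integral is $\int_{\alpha - y}^{\beta} \frac{dx}{xy} = \frac1y\bigl(\log\beta - \log(\alpha - y)\bigr)$, so I must evaluate $\int_{\alpha - \beta}^{\beta} \frac{\log\beta - \log(\alpha - y)}{y}\, dy$. The piece $\log\beta \int_{\alpha - \beta}^{\beta} \frac{dy}{y} = \log\beta \cdot \log\frac{\beta}{\alpha - \beta}$ is elementary. For $\int_{\alpha - \beta}^{\beta} \frac{\log(\alpha - y)}{y}\, dy$ I would rescale by $y = \alpha t$, turning it into $\int_{(\alpha - \beta)/\alpha}^{\beta/\alpha} \frac{\log\alpha + \log(1 - t)}{t}\, dt$; the $\log\alpha$ part contributes $\log\alpha \cdot \log\frac{\beta/\alpha}{(\alpha-\beta)/\alpha} = \log\alpha \cdot \log\frac{\beta}{\alpha - \beta}$, and the $\log(1-t)$ part is $-\Li_2\bigl(\frac{\beta}{\alpha}\bigr) + \Li_2\bigl(\frac{\alpha - \beta}{\alpha}\bigr)$ by the very definition~\eqref{Li}. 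Now I invoke the dilogarithm reflection formula $\Li_2(z) + \Li_2(1 - z) = \frac{\pi^2}{6} - \log z \log(1 - z)$ with $z = \frac{\beta}{\alpha}$ (so $1 - z = \frac{\alpha - \beta}{\alpha}$), which lets me replace $\Li_2\bigl(\frac{\alpha - \beta}{\alpha}\bigr)$ by $\frac{\pi^2}{6} - \log\frac{\beta}{\alpha}\log\frac{\alpha - \beta}{\alpha} - \Li_2\bigl(\frac{\beta}{\alpha}\bigr)$. Assembling all the logarithmic terms and simplifying (all the stray $\log\alpha$, $\log\beta$, $\log(\alpha - \beta)$ cross-terms must collapse into $\log^2\frac{\alpha}{\beta}$) produces $2\Li_2\bigl(\frac{\beta}{\alpha}\bigr) + \log^2\frac{\alpha}{\beta} - \frac{\pi^2}{6}$.

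\smallskip
The only genuinely nonroutine ingredient is the dilogarithm reflection identity; everything else is bookkeeping with $\int \frac{dx}{x}$, $\int \log u\, du$, and linear substitutions. The main place to be careful is the algebraic consolidation of the half-dozen logarithm products in the second integral into the single clean term $\log^2\frac{\alpha}{\beta}$ — I would double-check this by noting that both sides, viewed as functions of $(\alpha, \beta)$, must be symmetric-free in a way consistent with the $\lambda \to \lambda^{-1}$ substitution that connects this to~\eqref{D def}, and by spot-checking the special value $\alpha = 2\beta$ (equivalently $\lambda = \frac1{\sqrt2}$), where the first integral vanishes and the second should reduce to a known constant.
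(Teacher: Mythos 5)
Your computation is correct and follows essentially the same route as the paper: direct iterated integration, arriving at the intermediate form $\Li_2(\frac\beta\alpha) - \Li_2(1-\frac\beta\alpha) + \log\frac\alpha\beta\log\frac{\alpha-\beta}\beta$ and then applying the dilogarithm reflection formula $\Li_2(z)+\Li_2(1-z)=\frac{\pi^2}{6}-\log z\log(1-z)$. The paper merely sketches these steps, so your write-up supplies exactly the omitted bookkeeping, and the log cross-terms do consolidate to $\log^2\frac\alpha\beta$ as you anticipated.
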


\begin{proof}
The first double integral is straightforward to evaluate. For the second double integral, straightforward methods using the definition~\eqref{Li} yield
\[
\int_{\alpha-\beta}^{\beta} \int_{\alpha-y}^\beta \frac{1}{xy} \ dx\, dy = \Li_2\bigg( \frac\beta\alpha \bigg) - \Li_2\bigg( 1 - \frac\beta\alpha \bigg) + \log\frac\alpha\beta \cdot \log\frac{\alpha-\beta}\beta,
\]
which can be transformed into the desired form using the well-known functional equation of the dilogarithm,
\[
\Li_2(z) + \Li_2(1-z) = \frac{\pi^2}{6}-\log z \cdot \log (1-z)
\]
(see for example~\cite[Section 2]{Loxton}), valid for $0<z<1$.
\end{proof}

Using Lemma~\ref{lemsum2int}, we now find an asymptotic formula for the two sums in equation~\eqref{subtask1} (forming the main term in Theorem~\ref{maint}), using the M\"obius function to detect visible lattice points as was done in the proof of Lemma~\ref{Qcount}.

\begin{prop}\label{propa}
Let~$n$ be a positive integer satisfying $\sqrt{\frac p2}<n<\sqrt{p}$, and let~$\lambda$ and~$T$ be as defined in equations~\eqref{lambda} and~\eqref{T}. Then
\begin{equation}\label{propa two sums}
\lambda\sqrt{p} \sum_{(a,b) \in T} \left( \frac{1}{a} + \frac{1}{b} \right) - p\sum_{(a,b) \in T} \frac{1}{ab} = \bigg(\frac{12}{\pi^2} \lambda^2 -D(\lambda)\bigg) p + O\left( \sqrt{p}\log^2 p \right).
\end{equation}
\end{prop}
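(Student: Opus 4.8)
The plan is to estimate the two sums in~\eqref{propa two sums} separately by replacing each with a double integral over the triangular region $\{(x,y): 0 < x,y < \lambda\sqrt p,\ x+y > \sqrt p/\lambda\}$, using M\"obius inversion to isolate visible lattice points. First I would rewrite, for a function $f$ that is one of $\tfrac1a+\tfrac1b$ or $\tfrac1{ab}$,
\[
\sum_{(a,b)\in T} f(a,b) = \sum_{d\le \lambda\sqrt p} \mu(d)\, f(d,d)^{-1}\!\!\!\sum_{\substack{1\le a',b'\le \lambda\sqrt p/d\\ a'+b'\ge \sqrt p/(\lambda d)}} \!\! f(a',b'),
\]
more precisely pulling out the homogeneity: $\tfrac1a+\tfrac1b$ scales like $d^{-1}$ and $\tfrac1{ab}$ like $d^{-2}$, so after substituting $a=da'$, $b=db'$ the inner sum is again a sum of the same shape over a scaled triangular region with parameters $r = \lfloor \lambda\sqrt p/d\rfloor$ and $s = \lceil \sqrt p/(\lambda d)\rceil$. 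The condition $r'<s'<2r'$ needed for Lemma~\ref{lemsum2int} holds precisely because $\tfrac1{\sqrt2}<\lambda<1$ forces $\lambda\sqrt p/d < \sqrt p/(\lambda d) < 2\lambda\sqrt p/d$ (the right inequality is $\lambda^2<2$, trivially true, and the left is $\lambda^2<1$), at least once $d$ is not too large; for $d$ comparable to $\lambda\sqrt p$ the triangle is empty or tiny and contributes to the error.

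Next I would apply Lemma~\ref{lemsum2int} with $f(k,\ell)=\tfrac1k+\tfrac1\ell$ and then with $f(k,\ell)=\tfrac1{k\ell}$, obtaining in each case the main double integral plus the error terms from~\eqref{epsilon}: a thin double integral over a strip of width~$5$ along the hypotenuse, plus $2\sum_{\ell} f(s-\ell,\ell)$. Both error contributions are $O(\log(\lambda\sqrt p/d))$ for the first sum and $O(d^2 \log(\dots)/p)$ for the second (the strip integral of $\tfrac1{k\ell}$ near the boundary $k+\ell\approx s$ is $O(\log s / s) = O(d\log/\sqrt p)$, times the length $O(r)=O(\sqrt p/d)$ of the strip, giving $O(\log p)$; similarly the boundary sum). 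I would then evaluate the main integrals using Lemma~\ref{iints} with $\alpha = \sqrt p/(\lambda d)$ and $\beta = \lambda\sqrt p/d$ (checking $\beta/\alpha = \lambda^2$, so the dilogarithm argument is exactly $\lambda^2$), getting for the first sum an integral equal to $\tfrac{2p}{d^2}\bigl(2\lambda^2 - 1 + (1-\lambda^2)\log\tfrac{1-\lambda^2}{\lambda^2}\bigr)$ after accounting for the $\lambda\sqrt p\cdot\tfrac1d$ prefactor and the $d^{-1}$ homogeneity, and for the second sum $\tfrac{p}{d^2}\bigl(2\Li_2(\lambda^2)+\log^2(\lambda^{-2}) - \tfrac{\pi^2}6\bigr)$.

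Then I would sum over $d$: the $d$-sum of $\mu(d)/d^2$ converges to $6/\pi^2$ with tail $O(1/x)$ by~\eqref{musum2}, so $\sum_{d\le\lambda\sqrt p}\mu(d)/d^2 \cdot (\text{const}\cdot p) = \tfrac6{\pi^2}(\text{const})p + O(\sqrt p)$, while the accumulated error terms give $\sum_{d\le\lambda\sqrt p} O(\log p) = O(\sqrt p\log p)$ from the first sum and $\sum_{d} O((d^2/p)\cdot\log p)\cdot(p/d^2)$-type bounds — I need to be careful here: the cleanest bookkeeping is that each $d$ contributes an error $O(\sqrt p \log p / d)$ or $O(\log^2 p)$ after all scalings, and $\sum_{d\le\lambda\sqrt p} \tfrac1d = O(\log p)$, yielding the stated $O(\sqrt p\log^2 p)$. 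Combining, the coefficient of $p$ becomes $\tfrac6{\pi^2}\bigl[2\lambda\cdot\lambda(2\lambda^2-1+(1-\lambda^2)\log\tfrac{1-\lambda^2}{\lambda^2})\bigr] - \tfrac6{\pi^2}\bigl[2\Li_2(\lambda^2)+\log^2(\lambda^{-2})-\tfrac{\pi^2}6\bigr]$; a short algebraic simplification, using $\log\tfrac{1-\lambda^2}{\lambda^2} = \log(\lambda^{-2}-1)$ and $\log^2(\lambda^{-2}) = \log^2(\lambda^2)$, matches this against $\tfrac{12}{\pi^2}\lambda^2 - D(\lambda)$ with $D(\lambda)$ from~\eqref{D def} in the range $\lambda\in(\tfrac1{\sqrt2},1)$.

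The main obstacle I anticipate is not any single integral but the careful tracking of the error terms through the homogeneity rescaling: one must verify that the width-$5$ boundary strip and the hypotenuse boundary sum, after being multiplied by the $d$-dependent prefactors ($\lambda\sqrt p$ for the first sum, $p$ for the second) and the $d^{-1}$ or $d^{-2}$ scaling factors, really do sum to only $O(\sqrt p\log^2 p)$ — in particular the second sum's error is delicate because $\tfrac1{xy}$ is not bounded near the axes, so one must check that the triangle $T$ stays away from the coordinate axes (it does: $a+b\ge\sqrt p/\lambda$ and $a,b\le\lambda\sqrt p$ force $a,b\ge \sqrt p/\lambda - \lambda\sqrt p = \sqrt p(1-\lambda^2)/\lambda > 0$), which bounds $\tfrac1{xy}$ on $T$ by $O(\lambda^2/(p(1-\lambda^2)^2))$ and keeps everything uniform as long as $\lambda$ is bounded away from~$1$; the case $\lambda\to1$ is handled separately since then $D(\lambda)\to1$ and $N(p,n)\to(\tfrac{12}{\pi^2}-1)p$, which one can see directly or by a limiting argument as in the proof of Corollary~\ref{additive}. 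A secondary point requiring care is the handling of small and large~$d$: for $d$ close to $\lambda\sqrt p$ one simply bounds the inner sum trivially (it has $O(1)$ terms, each $O(1)$ or $O(1/p)$), contributing $O(\sqrt p)$ in total.
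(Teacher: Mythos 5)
Your proposal follows the paper's proof essentially step for step: M\"obius inversion with the homogeneity scalings $d^{-1}$ and $d^{-2}$, Lemma~\ref{lemsum2int} applied with $r=\lfloor n/d\rfloor$, $s=\lceil\sqrt p/(\lambda d)\rceil$, $r'=n/d$, $s'=\sqrt p/(\lambda d)$, evaluation of the two main integrals via Lemma~\ref{iints} with $\beta/\alpha=\lambda^2$, summation over $d$ via equations~\eqref{musum2} and~\eqref{harmonic}, and the same final algebraic matching with the definition~\eqref{D def}. The one step that would not go through as written is your treatment of $\lambda$ near~$1$. The error terms genuinely carry a factor $\log\frac{\lambda^2}{1-\lambda^2}$ (coming from the boundary sums $\sum_{\ell=s-r}^r f(s-\ell,\ell)$ and the ratio $r/(s-r)$, not from any singularity of $1/(xy)$ near the axes, which never enters since the integrals are evaluated exactly over $T$), and your proposed remedy --- ``a limiting argument as in the proof of Corollary~\ref{additive}'' --- is not available here: in the proposition $\lambda=n/\sqrt p$ is determined by the fixed pair $(p,n)$, so there is no parameter to send to $1^-$, and the statement must hold with an absolute constant even for $n=\lfloor\sqrt p\rfloor$, where $1-\lambda^2$ can be as small as $1/p$. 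The paper's fix is a single observation: $n^2\le p-1$ forces $1-\lambda^2\ge 1/p$, hence $\log\frac{\lambda^2}{1-\lambda^2}\le\log p$, which converts the error $O\bigl(\sqrt p\log p\cdot\log\frac{\lambda^2}{1-\lambda^2}\bigr)$ into the stated $O(\sqrt p\log^2p)$ uniformly. With that observation substituted for the limiting argument, your proof coincides with the paper's.
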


\begin{proof}
For the first sum, the characteristic property~\eqref{key identity} of the M\"obius function gives us
\begin{equation} \label{first mobius}
\sum_{(a,b) \in T} \left( \frac{1}{a} + \frac{1}{b} \right) = \sum_{d = 1}^n \mu(d) \sum_{\substack{(a,b) \in [n]^2 \\  a+b \geq \frac{\sqrt p}\lambda \\ d\mid a,\, d\mid b}} \left( \frac{1}{a} + \frac{1}{b} \right)  = \sum_{d = 1}^n \frac{\mu(d)}{d} \sum_{\ell = \lceil \frac{\sqrt{p}}{\lambda d}\rceil - \lfloor \frac{n}{d} \rfloor}^{\lfloor \frac{n}{d} \rfloor} \sum_{k = \lceil \frac{\sqrt{p}}{\lambda d}\rceil -\ell }^{\lfloor \frac{n}{d} \rfloor} \left( \frac{1}{k} + \frac{1}{\ell} \right)
\end{equation}
upon setting $a=\ell d$ and $b=k d$.
We apply Lemma~\ref{lemsum2int} to this inner double sum, with 
\begin{equation}\label{rsr's'}
r = \bigg\lfloor \frac{n}{d} \bigg\rfloor, \quad s = \bigg\lceil \frac{\sqrt{p}}{\lambda d} \bigg\rceil, \quad r' = \frac{n}{d}=\frac{\lambda \sqrt{p}}{d}, \quad \text{and }s' = \frac{\sqrt{p}}{\lambda d}
\end{equation}
(note that $r'<s'<2r'$ since $\frac12<\lambda^2<1$).
Using Lemma~\ref{iints}, the double integral in equation~\eqref{sum2int} becomes after simplification
\begin{align*}
\int_{\frac{\sqrt{p}}{\lambda d} - \frac{n}{d} }^{\frac{n}{d}} \int_{\frac{\sqrt{p}}{\lambda d} -y }^{\frac{n}{d}} \bigg(\frac{1}{x} + \frac{1}{y}\bigg) \,dx \,dy = \frac{2\sqrt p}d \big( (\lambda^{-1}-\lambda) \log(\lambda^{-2}-1) + 2\lambda-\lambda^{-1} \big)
\end{align*}
(remembering that $n=\lambda\sqrt p$).
On the other hand, the $\epsilon$ in equation~\eqref{sum2int} is at most 
\begin{align*}
\int_{s'-r'}^{r'} & \int_{s'-y}^{s'-y+5} \bigg(\frac{1}{x} + \frac{1}{y}\bigg) \,dx \,dy + 2 \sum_{\ell = s-r}^r \bigg(\frac{1}{s-\ell} + \frac{1}{\ell}\bigg) \\
& = \int_{s'-r'}^{r'}\left( \log \left( 1+\frac{5}{s'-y} \right) + \frac{5}{y} \right) \,dy + O\bigg( \sum_{\ell = s-r}^r \frac{1}{\ell} \bigg) \\
&\ll \int_{s'-r'}^{r'}\left( \frac{1}{s'-y}+\frac{1}{y} \right) \ dy + \log \left( \frac{r}{s-r} \right) \ll \log \left( \frac{r}{s-r} \right) = \log \frac{\lambda^2}{1-\lambda^2}
\end{align*}
by equation~\eqref{harmonic} and the bound $\log(1+x) \leq x$.
Substituting these two evaluations back into equation~\eqref{first mobius} and multiplying by $\lambda\sqrt p$ gives 
\begin{align}
\lambda\sqrt p & \sum_{(a,b) \in T} \left( \frac{1}{a} + \frac{1}{b} \right) \nonumber \\
&= \lambda\sqrt p \sum_{d = 1}^n \frac{\mu(d)}{d} \left( \frac{2\sqrt p}d \big( (\lambda^{-1}-\lambda) \log(\lambda^{-2}-1) + 2\lambda-\lambda^{-1} \big) + O \bigg( \log \frac{\lambda^2}{1-\lambda^2} \bigg) \right) \nonumber \\
&= \frac{12p}{\pi^2}\left((1-\lambda^2 )\log(\lambda^{-2}-1) + (2\lambda^2-1) \right) + O\left(\sqrt{p}\log p \cdot \log \frac{\lambda^2}{1-\lambda^2} \right).
\label{firstestimate}
\end{align}
using equations~\eqref{musum2} and~\eqref{harmonic} and $\lambda<1$.

For the second sum on the left-hand side of equation~\eqref{propa two sums}, the same procedure yields
\begin{equation} \label{second mobius}
\sum_{(a,b) \in T} \frac{1}{ab}  =  \sum_{d = 1}^n \frac{\mu(d)}{d^2} \sum_{\ell = \lceil \frac{\sqrt{p}}{\lambda d}\rceil - \lfloor \frac{n}{d} \rfloor}^{\lfloor \frac{n}{d} \rfloor} \sum_{k = \lceil \frac{\sqrt{p}}{\lambda d}\rceil -\ell }^{\lfloor \frac{n}{d} \rfloor} \frac{1}{k\ell}.
\end{equation}
Once again we apply Lemma~\ref{lemsum2int} using the parameters from equation~\eqref{rsr's'}.
Using Lemma~\ref{iints}, the double integral in equation~\eqref{sum2int} becomes after simplification
\begin{align*}
\int_{\frac{\sqrt{p}}{\lambda d} - \frac{n}{d} }^{\frac{n}{d}} \int_{\frac{\sqrt{p}}{\lambda d} -y }^{\frac{n}{d}} \frac{1}{xy} \,dx \,dy = 2\Li_2(\lambda^2) + \log^2(\lambda^2) - \frac{\pi^2}{6}.
\end{align*}
On the other hand, the~$\epsilon$ in equation~\eqref{sum2int} is similarly bounded by
\begin{align*}
\int_{s'-r'}^{r'} & \int_{s'-y}^{s'-y+5} \frac{1}{xy}  \,dx \,dy + 2 \sum_{\ell = s-r}^r \frac{1}{(s-\ell)\ell} \\
&= \int_{s'-r'}^{r'} \frac{1}{y}\log\left( 1 + \frac{5}{s'-y} \right) \, dy + \frac 2s \sum_{\ell = s-r}^r \bigg( \frac1\ell + \frac1{s-\ell} \bigg) \\
& \ll \int_{s'-r'}^{r'} \frac{1}{y(s'-y)} \,dy + \frac{1}{s}\log \left( \frac{r}{s-r} \right) \\
& \ll \frac{1}{s'}\log \left( \frac{r'}{s'-r'} \right) = \frac{d}{\sqrt{p}}\log \frac{\lambda^2}{1-\lambda^2}.
\end{align*}
Substituting these two evaluations back into equation~\eqref{second mobius} and multiplying by~$p$ gives
\begin{align*}
p \sum_{(a,b) \in T} \frac{1}{ab} &= p \sum_{d = 1}^n \frac{\mu(d)}{d^2} \bigg( 2\Li_2(\lambda^2) + \log^2(\lambda^2) - \frac{\pi^2}{6} + O\bigg( \frac{d}{\sqrt{p}}\log \frac{\lambda^2}{1-\lambda^2} \bigg) \bigg) \\
&= \frac{6p}{\pi^2} \big( 2\Li_2(\lambda^2) + \log^2(\lambda^2) \big) - p + O\bigg( \sqrt p\log p \cdot \log \frac{\lambda^2}{1-\lambda^2} \bigg)
\end{align*}
using equations~\eqref{musum2} and~\eqref{harmonic} and the fact that $\Li_2(\lambda^2)$ is bounded for $\frac1{\sqrt2}<\lambda<1$.

Finally, subtracting this equation from equation~\eqref{firstestimate} yields
\begin{align*}
\lambda\sqrt{p} \sum_{(a,b) \in T} & \left( \frac{1}{a} + \frac{1}{b} \right) - p\sum_{(a,b) \in T} \frac{1}{ab} \\
&= \frac{12p}{\pi^2}\left((1-\lambda^2 )\log(\lambda^{-2}-1) + (2\lambda^2-1) \right) \\
&\qquad{}- \frac{6p}{\pi^2} \big( 2\Li_2(\lambda^2) + \log^2(\lambda^2) \big) + p + O\left(\sqrt{p}\log p \cdot \log \frac{\lambda^2}{1-\lambda^2} \right) \\
&= \bigg(\frac{12}{\pi^2} \lambda^2 -D(\lambda)\bigg) p + O\left(\sqrt{p}\log p \cdot \log \frac{\lambda^2}{1-\lambda^2} \right)
\end{align*}
by the definition~\eqref{D def} of $D(\lambda)$ in this range. Note that $\lambda^2p = n^2 \leq p-1$ by assumption, and so $p \ge (1-\lambda^2)^{-1}$ and hence $-\log(1-\lambda^2) \leq \log p$. Consequently, we can replace the error term in this last estimate with $\sqrt{p} \log^2p$, which concludes the proof of the proposition.
\end{proof}

\section{Error term estimation}\label{secpropB}

The final goal of this paper is to establish an estimate for the expression~\eqref{subtask2} that allows it to be absorbed into the error term in Theorem~\ref{maint}. Indeed, since $(a,b)\in T$ if and only if $(b,a)\in T$, it suffices to estimate
\begin{equation} \label{summand}
\sum_{(a,b) \in T} \bigg(\bigg\{\frac{\lambda \sqrt{p}}{b}-\frac{p\overline{a}_b}{b}\bigg\} - \frac12 \bigg),
\end{equation}
which we do in Proposition~\ref{propb}. (As mentioned earlier, Theorem~\ref{maint} follows immediately from combining Propositions~\ref{breakdown}, \ref{propa}, and~\ref{propb}.) Intuitively, one expects the average value of the summand in equation~\eqref{summand} to be close to~$0$, since the argument of the fractional-part function seems randomly distributed; this intuition can be made precise by bounding the discrepancy of the summand (see Definition~\ref{discrepancy} below).

In the following discussion, we fix an odd prime~$p$. We use the standard notations $\tau(n)$ for the number of positive divisors of~$n$ and $\phi(n)$ for the number of integers in~$[n]$ that are coprime to~$n$; and we recall that~$T$ was defined in equation~\eqref{T}.

\begin{defn} \label{intervals def}
For each $b \in [(\lambda^{-1}-\lambda) \sqrt{p},\lambda\sqrt{p}]$ define the following finite sets:
\begin{align*}
I_b &= \bigg\{ a \in \mathbb{Z} \colon (a,b) \in T \bigg\} = \bigg\{a \in \Big[ \frac{\sqrt p}\lambda - b,  \lambda\sqrt p \Big] \colon \gcd(a,b) = 1 \bigg\}, \\
I_b^+ &= \bigg\{a \in \Big[ \frac{\sqrt p}\lambda - b, \frac{\sqrt p}\lambda \Big) \colon \gcd(a,b) = 1 \bigg\}, \\
I_b^- &= \bigg\{a \in \Big( \lambda \sqrt{p},  \frac{\sqrt p}\lambda \Big) \colon \gcd(a,b) = 1 \bigg\}.
\end{align*}
\end{defn}

Clearly $I_b = I_b^+\setminus I_b^-$, and the fact that $T=\big\{ (a,b)\colon b\in [(\lambda^{-1}-\lambda) \sqrt{p},\lambda\sqrt{p}],\, a\in I_b\big\}$ follows directly from the definition~\eqref{T}. The set~$I_b^-$ is contained in an interval whose length is independent of~$b$, and we will estimate the contribution to equation~\eqref{summand} from $a\in I_b^-$ using exponential sums. On the other hand, the set~$I_b^+$ is a complete set of reduced residues modulo~$b$, allowing the contribution to equation~\eqref{summand} from $a\in I_b^+$ to be estimated using elementary techniques as follows.
The next lemma, which uses a classical Bernoulli polynomial identity, is all we need to estimate \eqref{summand} over the interval $I_b^+$. 

\begin{lemma}\label{I+}
For any real numbers $\alpha$ and~$y$ and any positive integer~$b$,
\[
\sum_{\substack{y \leq a < y+b \\ \gcd(a,b) = 1}} \left(\left\{ \alpha - \frac{a}{b} \right\}-\frac{1}{2} \right) \ll \tau(b) .
\]
\end{lemma}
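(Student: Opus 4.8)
The plan is to use the M\"obius function to detect the coprimality condition $\gcd(a,b)=1$, reducing the sum to a sum over arithmetic progressions, and then to express each resulting inner sum exactly via the first Bernoulli polynomial (equivalently, the sawtooth function). First I would write $\gcd(a,b)=1$ as $\sum_{e \mid \gcd(a,b)} \mu(e)$, so that the sum becomes $\sum_{e \mid b} \mu(e) \sum_{y \le a < y+b,\, e \mid a} (\{\alpha - a/b\} - 1/2)$. Setting $a = e m$ (and noting $e \mid b$, say $b = ef$), the inner sum runs over a complete set of residues $m$ modulo $f = b/e$, namely $y/e \le m < y/e + f$, of the quantity $\{\alpha - em/b\} - 1/2 = \{\alpha - m/f\} - 1/2$. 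Thus the whole expression equals $\sum_{e \mid b} \mu(e) \sum_{m} \big( \{\alpha - m/f\} - 1/2 \big)$ where the inner sum is over any $f$ consecutive integers $m$.

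The next step is the key classical input: for any real $\alpha$, the sum of $\{\alpha - m/f\} - 1/2 = -B_1(\{\alpha - m/f\})$ over a complete residue system $m \bmod f$ is bounded in absolute value by a constant (in fact by $1/2$). This follows from the distribution identity for the first Bernoulli polynomial, $\sum_{m=0}^{f-1} B_1\!\big(x + \tfrac{m}{f}\big) = \tfrac1f B_1(fx)$, applied with $x = \alpha + (\text{integer})/f$ chosen so that the $m$ range becomes $0,\dots,f-1$; since $|B_1(t)| = |\{t\} - 1/2| \le 1/2$, the inner sum over $m$ is $\le \tfrac1{2f} \cdot 1 \le 1/2$ in absolute value, uniformly in $\alpha$ and $y$. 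Plugging this bound in gives
\[
\sum_{\substack{y \le a < y+b \\ \gcd(a,b)=1}} \left( \left\{ \alpha - \frac ab \right\} - \frac12 \right) \ll \sum_{e \mid b} |\mu(e)| \ll \tau(b),
\]
which is exactly the claimed estimate.

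I do not expect any serious obstacle here; the only point requiring a little care is the bookkeeping when substituting $a = em$ and verifying that $m$ genuinely ranges over a full residue system modulo $b/e$ (which it does precisely because $e \mid b$ and $a$ runs over $b$ consecutive integers), together with the clean observation that $\{\alpha - em/b\}$ depends only on $m$ modulo $b/e$. An alternative to the Bernoulli polynomial identity, if one prefers to avoid citing it, is to use the Fourier expansion of the sawtooth function and orthogonality of additive characters modulo $b/e$ to get the same uniform bound on the inner sum; either route yields the constant bound that, summed against $\mu(e)$ over divisors $e$ of $b$, produces the $\tau(b)$ on the right-hand side.
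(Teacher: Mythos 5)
Your proposal is correct and follows essentially the same route as the paper: M\"obius inversion to detect $\gcd(a,b)=1$, then an exact evaluation of each resulting inner sum over a complete residue system via the first Bernoulli polynomial (the paper cites the identity $\sum_{k=1}^{q}(\{\alpha-\tfrac kq\}-\tfrac12)=\{\alpha q\}-\tfrac12$, which is your distribution identity in disguise). One small slip: the distribution identity for $B_1$ carries no $\tfrac1f$ factor (it reads $\sum_{m=0}^{f-1}B_1(x+\tfrac mf)=B_1(fx)$), but since $|B_1(fx)|\le\tfrac12$ in any case, your $O(1)$ bound on each inner sum, and hence the final $\ll\tau(b)$, is unaffected.
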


\begin{proof}
Since $\{ \alpha - \frac{x}{b} \}$ is periodic with period~$b$, it suffices to consider $y=1$.
For all positive integers~$q$, we have the identity
\begin{equation} \label{Bern}
\sum_{k = 1}^q \left(\left\{ \alpha - \frac{k}{q} \right\} - \frac{1}{2} \right)  =  \{\alpha q\} - \frac{1}{2}
\end{equation}
(see for example~\cite[Lemma 2]{Mon}). Using the property~\eqref{key identity} of the M\"obius function, we write
\begin{align*}
    \sum_{\substack{1 \leq a \leq b \\ \gcd(a,b) = 1}} \left(\left\{ \alpha - \frac{a}{b} \right\} - \frac{1}{2} \right) &= \sum_{1 \leq a \leq b} \left(\left\{ \alpha - \frac{a}{b} \right\} - \frac{1}{2} \right) \sum_{d\mid \gcd(a,b)} \mu(d) \\
    &= \sum_{d\mid b} \mu(d) \sum_{\substack{1 \le a \le b \\ d\mid a}} \left(\left\{ \alpha - \frac{a}{b} \right\} - \frac{1}{2} \right)\\
    &= \sum_{d\mid b} \mu(d) \sum_{k = 1}^{b/d}\left(\left\{ \alpha - \frac{kd}{b} \right\} - \frac{1}{2} \right)\\
    &= \sum_{d\mid b} \mu(d) \bigg(\bigg\{\frac{\alpha b}{d}\bigg\} - \frac{1}{2}\bigg) \ll \tau(b)
\end{align*}
by equation~\eqref{Bern} applied with $q=\frac bd$.
\end{proof}

To estimate the contribution to equation~\eqref{summand} from intervals of the shape~$I_b^-$, we consider the discrepancy of the corresponding sequence.

\begin{defn}\label{discrepancy}
Let $\{u_n\}$ be a sequence. For all $0 \leq \alpha \leq \beta \leq 1$, define 
\[
Z(N;\alpha, \beta)=\#\big\{ n \in [N]\colon u_n \in [\alpha, \beta] \Mod 1 \big\}.
\]
The \emph{discrepancy} of the sequence $\{u_n\}$,
\[
D(N)=\sup_{0 \leq \alpha \leq \beta \leq 1} \big| Z(N; \alpha, \beta)-N(\beta-\alpha) \big|.
\]
measures the maximum absolute difference between the counting function $Z(N; \alpha, \beta)$ and the expected number $N(\beta-\alpha)$.
\end{defn}

In the following well-known inequality (see for example~\cite[Corollary 1.1]{Ten}), we use the standard notation $e(x) = e^{2\pi i x}$. 

\begin{prop}[Erd\H{o}s--Tur\'an inequality]\label{ET}
For any sequence $\{u_n\}$ and any positive integers~$N$ and~$K$,
\[
D(N) \leq \frac{N}{K+1} + 3 \sum_{t=1}^K \frac{1}{t} \bigg| \sum_{n=1}^N e(tu_n)\bigg|.
\]
\end{prop}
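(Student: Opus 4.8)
The plan is to reduce the bound on $D(N)$ to a pair of exponential-sum estimates by sandwiching the one-periodic indicator function $\mathbf{1}_{[\alpha,\beta]}$ of an arc between two real trigonometric polynomials of degree at most $K$. Concretely, I would first establish the existence of real trigonometric polynomials $S^{\pm}(x)=\sum_{|t|\le K} c^{\pm}_t\, e(tx)$ such that
\[
S^{-}(x)\le \mathbf{1}_{[\alpha,\beta]}(x)\le S^{+}(x)\qquad\text{for all }x\in\mathbb{R},
\]
whose mean values satisfy $\bigl|c^{\pm}_0-(\beta-\alpha)\bigr|\le \frac{1}{K+1}$, and whose remaining coefficients obey $|c^{\pm}_t|\le \frac{1}{K+1}+\frac{1}{\pi|t|}$ for $1\le|t|\le K$. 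Such majorants and minorants are precisely the Selberg (Beurling--Vaaler) extremal functions attached to an interval; alternatively they can be produced by convolving a suitable step function with the Fej\'er kernel $F_K$ and adjusting the mean value by an explicit constant, at the cost of slightly weaker but still adequate coefficient bounds.

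Given these polynomials, the deduction is short. Applying the majorant at the points $u_1,\dots,u_N$ and expanding gives
\[
Z(N;\alpha,\beta)\le\sum_{n=1}^N S^{+}(u_n)=N c^{+}_0+\sum_{0<|t|\le K} c^{+}_t\sum_{n=1}^N e(tu_n).
\]
Subtracting $N(\beta-\alpha)$, bounding the main term by $N|c_0^+ - (\beta - \alpha)| \le \frac{N}{K+1}$, and pairing each $t$ with $-t$ (using that $S^{+}$ is real, so $|c^{+}_{-t}|=|c^{+}_t|$, and that $\bigl|\sum_n e(-tu_n)\bigr|=\bigl|\sum_n e(tu_n)\bigr|$) yields
\[
Z(N;\alpha,\beta)-N(\beta-\alpha)\le\frac{N}{K+1}+2\sum_{t=1}^K|c^{+}_t|\,\Bigl|\sum_{n=1}^N e(tu_n)\Bigr|\le\frac{N}{K+1}+3\sum_{t=1}^K\frac{1}{t}\,\Bigl|\sum_{n=1}^N e(tu_n)\Bigr|,
\]
where the last step uses $2t\,|c^{+}_t|\le \frac{2t}{K+1}+\frac{2}{\pi}\le 3$. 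The identical argument with the minorant $S^{-}$ produces the matching lower bound for $Z(N;\alpha,\beta)-N(\beta-\alpha)$, and taking the supremum over $0\le\alpha\le\beta\le1$ gives the stated inequality for $D(N)$.

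The one genuinely substantive step is the construction in the first paragraph: exhibiting trigonometric polynomials of degree at most $K$ that one-sidedly approximate an interval indicator while simultaneously controlling both the zeroth coefficient and the decay of the higher coefficients sharply enough to keep the final constant at most $3$. This is the Beurling--Selberg extremal-function input, and essentially all the work of the proof is concentrated there; the expansion, the symmetrization in $t$, and the passage from $Z(N;\alpha,\beta)$ to $D(N)$ are then purely mechanical. (In practice one simply cites this classical inequality, as is done here.)
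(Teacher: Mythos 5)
Your argument is correct and is the standard Beurling--Selberg/Vaaler proof of the Erd\H{o}s--Tur\'an inequality; the paper itself gives no proof, simply citing Montgomery's \emph{Ten Lectures} (Corollary 1.1 there), where exactly this majorant/minorant argument is carried out. The coefficient bounds you quote for the Selberg polynomials do yield $2t\,|c_t^{\pm}|\le \frac{2t}{K+1}+\frac{2}{\pi}<3$ for $1\le t\le K$, so the constant $3$ checks out.
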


Our application of Proposition~\ref{ET} will use an estimate on incomplete Kloosterman sums which ultimately follows from Weil's bounds on exponential sums.

\begin{lemma} \label{expsum}
Let $m\ge2$ be an integer and $y$ and $z$ real numbers satisfying $0<z-y\ll m$. Then for any integer~$t$,
\[
\sum_{\substack{{y<n\le z}\\{(n,m)=1}}} e\Big ( \frac{t\overline n_m}{m}\Big ) \ll \sqrt{m\gcd(t,m)}\cdot \tau (m)\log m.
\]
\end{lemma}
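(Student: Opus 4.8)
The plan is to reduce the incomplete Kloosterman sum over residues coprime to $m$ to a linear combination of complete Kloosterman sums, which we can then bound via Weil. First I would use Möbius inversion on the coprimality condition exactly as in the proofs of Lemma~\ref{Qcount} and Lemma~\ref{I+}: writing $\sum_{d\mid (n,m)}\mu(d)$ and swapping the order of summation, we get
\[
\sum_{\substack{y<n\le z\\(n,m)=1}} e\Big(\frac{t\overline n_m}{m}\Big) = \sum_{d\mid m}\mu(d)\sum_{\substack{y<n\le z\\d\mid n}} e\Big(\frac{t\overline n_m}{m}\Big).
\]
Here one has to be a little careful: $\overline n_m$ only makes sense when $(n,m)=1$, so in the inner sum one should really think of the original summand as supported on $n$ coprime to $m$, and the Möbius identity is applied to that indicator. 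A cleaner route is to write $n=dn'$ with $d\mid m$, $(n',m/d)=1$, and note $\overline{n}_m$ depends on $n'$ through $\overline{n'}_{m/d}$ after extracting the $d$-part; alternatively, since the statement only needs an upper bound, bound $|\mu(d)|\le 1$ and handle each $d\mid m$ separately, reducing to incomplete Kloosterman sums modulo $m/d$ with a dilated argument.

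Second, for each resulting incomplete Kloosterman sum of the form $\sum_{y'<n\le z', (n,m')=1} e(t'\overline n_{m'}/m')$ over an interval, I would detect the interval constraint by completing the sum: use the standard identity expressing the indicator of an interval modulo $m'$ as a Fourier expansion, so that
\[
\sum_{\substack{y'<n\le z'\\(n,m')=1}} e\Big(\frac{t'\overline n_{m'}}{m'}\Big) = \frac{1}{m'}\sum_{h \bmod m'}\Big(\sum_{y'<u\le z'} e\Big(\frac{-hu}{m'}\Big)\Big)\sum_{\substack{n \bmod m'\\(n,m')=1}} e\Big(\frac{t'\overline n_{m'}+hn}{m'}\Big).
\]
The inner complete sum is a Kloosterman sum $K(t',h;m')$, and Weil's bound (in the form valid for composite moduli, via multiplicativity and the prime-power case) gives $|K(t',h;m')|\ll m'^{1/2}\gcd(t',h,m')^{1/2}\tau(m')$. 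The geometric-series factor $\sum_{y'<u\le z'} e(-hu/m')$ is $\ll \min(z'-y', \|h/m'\|^{-1})$, and summing this against the Weil bound over $h \bmod m'$ produces, after the usual estimate $\sum_h \min(\cdot,\cdot)\ll m'\log m'$, a bound of the shape $\sqrt{m'\gcd(t',m')}\,\tau(m')\log m'$. Since $m'\mid m$ and the gcd only grows, and since $\tau(m')\le\tau(m)$ and $\log m'\le\log m$, and there are at most $\tau(m)$ divisors $d$ to sum over, everything aggregates to $\sqrt{m\gcd(t,m)}\,\tau(m)\log m$ (absorbing an extra $\tau(m)$ into the constant or noting the divisor sum contributes only logarithmically if one is more careful).

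The main obstacle is bookkeeping rather than any deep input: one must track how the Weil bound's gcd factor and divisor factor behave under the Möbius dilation $n\mapsto dn'$ and the change of modulus $m\mapsto m/d$, and make sure the dilated exponential argument $t'$ still satisfies $\gcd(t',m/d)\le\gcd(t,m)$ or is otherwise controlled. A secondary technical point is justifying Weil's bound for the relevant composite modulus $m$ (the paper says "ultimately follows from Weil's bounds"), which is handled by the multiplicativity of Kloosterman sums together with the classical evaluation for prime powers $p^k$ with $k\ge2$; the $\tau(m)$ factor in the statement is exactly what accommodates this composite case. I would state the completion-of-the-interval step and the Weil bound as black boxes with references, and spend the written proof mostly on verifying that the Möbius-inversion reduction does not lose more than a factor of $\tau(m)$.
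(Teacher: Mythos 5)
Your proposal takes a genuinely different route from the paper, because the paper does not actually prove this estimate from scratch: it quotes \cite[Lemma 1]{DM}, which gives the asymptotic
\[
\sum_{\substack{y<n\le z\\(n,m)=1}} e\Big(\frac{t\overline n_m}{m}\Big)
= \frac{z-y}{m}\,\mu\Big(\frac{m}{\gcd(t,m)}\Big)\frac{\phi(m)}{\phi(m/\gcd(t,m))}
+ O\big(\sqrt{m\gcd(t,m)}\,\tau(m)\log m\big),
\]
and then the only substantive step in the paper is to absorb the main term into the error term using $\phi(m)/\phi(m/\gcd(t,m))\le\gcd(t,m)\le\sqrt{m\gcd(t,m)}$ together with the hypothesis $z-y\ll m$. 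Your completion-plus-Weil argument is the standard proof of that cited input, and its core is sound: the $h=0$ term of your completed sum is precisely the Ramanujan-sum main term above (here is exactly where $z-y\ll m$ is needed), and the $h\ne0$ terms give $\ll\tau(m)\sqrt{m\gcd(t,m)}\log m$ after the usual $\sum_h\min(z-y,|1-e(h/m)|^{-1})\ll m\log m$ estimate, using $\gcd(t,h,m)\le\gcd(t,m)$. So your sketch buys a self-contained proof, at the cost of having to invoke Weil for composite moduli, while the paper buys brevity and a clean error term by citation.

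Two corrections to your write-up. First, the opening M\"obius-inversion layer is both ill-posed (as you concede, $\overline n_m$ is undefined when $\gcd(n,m)>1$) and unnecessary: the completion identity you write in your second step already carries the coprimality condition inside the complete sum, which is by definition the Kloosterman sum $K(t,h;m)$ restricted to $(n,m)=1$. You should apply completion directly with $m'=m$ and $t'=t$ and delete the first step entirely. Second, if you did retain the divisor sum over $d\mid m$, you could not ``absorb an extra $\tau(m)$ into the constant'': $\tau(m)$ is unbounded, and the degraded bound $\tau(m)^2\sqrt{m\gcd(t,m)}\log m$ is not what the lemma asserts and would change the power of $\log p$ in Lemma~\ref{estfrac} and hence in Theorem~\ref{maint}. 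Dropping the redundant first step removes both problems and leaves a correct proof.
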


\begin{proof}
Dartyge and the first author~\cite[Lemma 1]{DM} showed that for arbitrary real numbers $y<z$,
\[
\sum_{\substack{{y<n\le z}\\{(n,m)=1}}} e\Big ( \frac{t\overline n_m}{m}\Big )= \frac{z-y}{m} \mu\Big (\frac{m}{\gcd(t,m)}\Big )\frac{\phi (m)}{\phi (m/\gcd(t,m))}+O\big( \sqrt{m\gcd(t,m)}\cdot \tau (m)\log m \big).
\]
(While estimates for incomplete Kloosterman sums have been recorded for decades, this more recent citation has the desirable properties that a complete proof is included and that the error term does not contain an $m^\epsilon$ factor.) Using the elementary inequality $\phi(mn) \le m\phi(n)$, so that $\phi(c)/\phi(\frac cd)\le d$ when $d\mid c$, we find that
\[
\frac{\phi (m)}{\phi (m/\gcd(t,m))} \le \gcd(t,m) \le \sqrt{m\gcd(t,m)};
\]
therefore the first term can be subsumed into the error term in light of the assumption $z-y\ll m$.
\end{proof}

The below lemma combines the previous two results to give a sufficient estimate of \eqref{summand} over $I_b^-$. For a positive integer~$b$, we define the counting function of the totients modulo~$b$,
\[ R_b(X) = \{ a \in [1,X] \colon \gcd(a,b) = 1\} .\]

\begin{lemma}\label{estfrac} Fix a positive integer $b<\sqrt{p}$. For any positive real $X$, define
\begin{align*}
Z_b(X,\alpha,\beta) &= \# \Big\{ a \in R_b(X) \colon \frac{p \overline{a}_b}b \in [\alpha,\beta] \Mod{1}\Big\}, \\
D_b(X) &=\sup_{0 \leq \alpha \leq \beta \leq 1} \big|Z_b(X,\alpha,\beta)-(\beta-\alpha)|R_b(X)|\big|.
\end{align*}
Then for all real numbers $0\le X \le b$,
\[ D_b(X) \ll \tau(b)^{3/2} p^{1/4} (\log p)^2.\]
\end{lemma}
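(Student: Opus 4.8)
The plan is to apply the Erd\H{o}s--Tur\'an inequality (Proposition~\ref{ET}) to the sequence $u_a = p\overline{a}_b/b$ indexed by the elements $a$ of $R_b(X)$, so that with $N = |R_b(X)|$ we have $Z(N;\alpha,\beta) = Z_b(X,\alpha,\beta)$ and $D(N) = D_b(X)$, and then to control the resulting exponential sums using the incomplete Kloosterman sum bound of Lemma~\ref{expsum}. As a preliminary reduction we may assume $b \ge 2$ (the case $b=1$ gives $D_1(X) \le |R_1(X)| = \lfloor X \rfloor \le 1$, well below the target bound) and $R_b(X) \ne \emptyset$; note in particular that $N \le X \le b < \sqrt{p}$.

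The first main step is to estimate, for each positive integer $t$, the exponential sum $\sum_{a \in R_b(X)} e(t u_a) = \sum_{0 < a \le X,\ \gcd(a,b)=1} e\big((tp)\overline{a}_b/b\big)$. Since the range of summation is an interval of length $X \le b$, Lemma~\ref{expsum} applies (with modulus $b$, and with $tp$ playing the role of the variable called $t$ there) and bounds this sum by $\ll \sqrt{b\gcd(tp,b)}\cdot\tau(b)\log b$; moreover $\gcd(tp,b) = \gcd(t,b)$ because $p$ is prime and $b < \sqrt p < p$, so the bound simplifies to $\sqrt{b\gcd(t,b)}\cdot\tau(b)\log b$. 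Feeding this into Proposition~\ref{ET} with a parameter $K$ still to be chosen, one is left to estimate $\sum_{t=1}^{K} \frac{\sqrt{\gcd(t,b)}}{t}$; grouping the terms according to the value $d = \gcd(t,b)$ and using $\sum_{t \le K,\ d \mid t} 1/t \ll (\log K)/d$ shows that this sum is $\ll (\log K) \sum_{d \mid b} d^{-1/2}$. At this point I would invoke the elementary estimate $\sum_{d \mid b} d^{-1/2} \ll \tau(b)^{1/2}$ for multiplicative functions (a factor-by-factor comparison of Euler products). Altogether this yields
\[ D_b(X) \;\ll\; \frac{N}{K} \;+\; \tau(b)^{3/2}\sqrt{b}\,(\log b)(\log K). \]

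The last step is to choose $K$ optimally. Taking $K = \lceil p^{1/4} \rceil$ and using $N < \sqrt{p}$, $\sqrt{b} < p^{1/4}$ (this is exactly where the hypothesis $b < \sqrt p$ enters), and $\log b, \log K \ll \log p$, we obtain $D_b(X) \ll p^{1/4} + \tau(b)^{3/2} p^{1/4}(\log p)^2$, which is the assertion of the lemma.

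I do not expect a deep obstacle: the architecture is the standard ``Erd\H os--Tur\'an plus Weil'' argument, and the points requiring care are all bookkeeping. One must check that the summation interval has length at most $b$ so that Lemma~\ref{expsum} applies with an absolute constant, and that $\gcd(tp,b) = \gcd(t,b)$. Most importantly, to get the exponent of $\tau(b)$ down to $\tfrac32$ rather than $2$ one needs the divisor-sum estimate in the sharp form $\sum_{d\mid b} d^{-1/2} \ll \tau(b)^{1/2}$ (not the cruder $\ll \tau(b)$), since one factor of $\tau(b)$ is already unavoidable from Lemma~\ref{expsum}; and $K$ must be chosen of size $\asymp p^{1/4}$ so that the two error contributions $N/K$ and $\sqrt b\,(\log b)(\log K)$ balance without incurring extra logarithmic losses beyond $(\log p)^2$.
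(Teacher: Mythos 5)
Your proposal is correct and follows essentially the same route as the paper: Erd\H{o}s--Tur\'an applied to $u_a = p\overline{a}_b/b$, the incomplete Kloosterman bound of Lemma~\ref{expsum} with $\gcd(tp,b)=\gcd(t,b)$, and the grouping of $\sum_{t\le K}\sqrt{\gcd(t,b)}/t$ by $d=\gcd(t,b)$ to get $\ll \tau(b)^{1/2}\log K$. The only (immaterial) differences are that the paper derives $\sum_{d\mid b}d^{-1/2}\ll\tau(b)^{1/2}$ by noting the $j$-th smallest divisor of $b$ is at least $j$ (rather than your Euler-product comparison, which also works) and chooses $K=b$ rather than $K\asymp p^{1/4}$; both choices balance the terms within the target bound.
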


\begin{proof}
For each integer~$t$, Lemma~\ref{expsum} gives the estimate
\begin{align*} \sum_{a \in R_b(X)} e\left(\frac{tp\overline{a}_b}{b} \right) & \ll \sqrt{b \gcd(pt,b)}\cdot \tau(b) \log b \ll \sqrt{b \gcd(t,b)} \cdot \tau(b) \log b
\end{align*}
(since $\gcd(p,b)=1$). For any positive integer~$K$, using the change of variables $t=ds$,
\begin{multline*}
\sum_{t \leq K} \frac{\sqrt{\gcd(t,b)}}{t} = \sum_{d\mid b} \sqrt{d} \sum_{\substack{t \leq K \\ \gcd(t,b) = d}} \frac1t
\le \sum_{d\mid b} \sqrt{d} \sum_{\substack{t \leq K \\ d\mid t}} \frac{1}{t} = \sum_{d\mid b} \frac1{\sqrt d} \sum_{s\le K/d} \frac1s \\
\le \sum_{d\mid b} \frac1{\sqrt d} \sum_{s\le K} \frac1s
\ll  \sum_{d\mid b} \frac{1}{\sqrt{d}} \log K \leq \sum_{d = 1}^{\tau(b)}\frac{1}{\sqrt{d}} \log K \ll \sqrt{\tau(b)}\log K.
\end{multline*}
Applying Proposition~\ref{ET} and using these two estimates, we obtain
\begin{align*}
    D_b(X) &\leq \frac{|R_b(X)|}{K+1} +3\sum_{t \leq K} \frac{1}{t} \left| \sum_{a \in R_b(X)} e\left(\frac{tp\overline{a}_b}{b} \right) \right| \\
    &\ll \frac{X}K + \sum_{t \leq K} \frac{1}{t} \sqrt{b \gcd(t,b)} \cdot \tau(b) \log b \\
    & \ll \frac{b}K+ \big( \sqrt{b} \cdot \tau(b) \log b \big) \tau(b) \log K \ll \frac{b}K+ \tau(b)^{3/2} p^{1/4}\log p \log K,
\end{align*}
since $b<\sqrt p$.
Setting $K = b$ completes the proof of the lemma.
\end{proof}

We remark that Karatsuba~\cite{Ka1, Ka2} gave estimates on equidistribution of fractional parts in much shorter intervals. However, the error terms in those estimates are less advantageous for us, so we have opted for this more elementary method.

We are now ready to prove Proposition~\ref{propb}, which completes the proof of Theorem~\ref{maint}.

\begin{prop} \label{propb}
Let $\lambda \in (\frac1{\sqrt{2}},1)$. Then with $T$ as defined in equation~\eqref{T},
\[
\sum_{(a,b) \in T} \bigg(\bigg\{\frac{\lambda \sqrt{p}}{b}-\frac{p\overline{a}_b}{b}\bigg\} - \frac12 \bigg) \ll p^{3/4}(\log p)^{2^{3/2}+1}.
\]
\end{prop}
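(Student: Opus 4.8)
The plan is to split the sum over $(a,b)\in T$ according to the decomposition $I_b = I_b^+\setminus I_b^-$ from Definition~\ref{intervals def}, handling the two pieces by the two different tools prepared above. Writing $g(a,b) = \{\lambda\sqrt p/b - p\overline a_b/b\} - \tfrac12$, we have
\[
\sum_{(a,b)\in T} g(a,b) = \sum_{b} \sum_{a\in I_b^+} g(a,b) - \sum_b \sum_{a\in I_b^-} g(a,b),
\]
where $b$ ranges over integers in $[(\lambda^{-1}-\lambda)\sqrt p,\lambda\sqrt p]$. For the first inner sum, note that $I_b^+$ is exactly the set of reduced residues modulo~$b$ lying in the half-open interval $[\sqrt p/\lambda - b,\sqrt p/\lambda)$ of length~$b$, so Lemma~\ref{I+} (with $\alpha = \lambda\sqrt p/b$, which is the constant term inside the fractional part, and $y = \sqrt p/\lambda - b$) gives $\sum_{a\in I_b^+} g(a,b) \ll \tau(b)$. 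Summing over $b\le\lambda\sqrt p<\sqrt p$ and using the standard bound $\sum_{b\le x}\tau(b)\ll x\log x$ yields a contribution of $O(\sqrt p\log p)$, which is comfortably within the claimed error term.

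The second inner sum is the genuinely delicate part. Here $I_b^-$ is the set of reduced residues modulo~$b$ in the interval $(\lambda\sqrt p, \sqrt p/\lambda)$, whose length $(\lambda^{-1}-\lambda)\sqrt p$ is $O(\sqrt p)$ and hence $O(b)$ when $b\gg\sqrt p$ — but for small~$b$ it can be much larger than~$b$, so one must be a little careful. In any case, the idea is to compare $\sum_{a\in I_b^-} g(a,b)$ to its ``expected'' value. Since the values $p\overline a_b/b \bmod 1$ as $a$ runs over reduced residues are (heuristically) equidistributed, and $\int_0^1(\{c - t\}-\tfrac12)\,dt = 0$ for any constant~$c$, the expected value is~$0$; the error in this approximation is controlled by the discrepancy $D_b(\cdot)$ from Lemma~\ref{estfrac}. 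Concretely, I would write $\sum_{a\in I_b^-} g(a,b)$ as a Riemann–Stieltjes-type sum against the counting function $Z_b(\cdot,\alpha,\beta)$ — partial summation, or directly approximating the sum of $\{c-t\}-\tfrac12$ over a discrepancy-$D$ set — to obtain the bound $\sum_{a\in I_b^-} g(a,b) \ll D_b(\sqrt p/\lambda) + (\text{a boundary term})$, where the main input is $D_b(X)\ll \tau(b)^{3/2}p^{1/4}(\log p)^2$ (valid since the relevant $X$ is $O(\sqrt p)=O(b)$, at least for $b$ in the bulk of the range; the small-$b$ tail where the interval wraps around several times can be split into $O(\sqrt p/b)$ blocks of length $\le b$, each contributing $\ll D_b(b)$, so one gets $\ll (\sqrt p/b)\,\tau(b)^{3/2}p^{1/4}(\log p)^2$, still summable).

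Summing over the $O(\sqrt p)$ values of~$b$ then gives a bound of the shape $p^{1/4}(\log p)^2 \sum_{b\le\sqrt p}\tau(b)^{3/2}$, and the exponent $2^{3/2}+1$ in the proposition is exactly what emerges from the Shiu/Wirsing-type estimate $\sum_{b\le x}\tau(b)^{3/2} \ll x(\log x)^{2^{3/2}-1}$ (the general fact being $\sum_{b\le x}\tau(b)^{\kappa}\ll x(\log x)^{2^\kappa - 1}$): multiplying by the extra $\sqrt p$ from the range of~$b$ and the $p^{1/4}(\log p)^2$ from $D_b$ produces precisely $p^{3/4}(\log p)^{2^{3/2}+1}$. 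The main obstacle, and the step needing the most care, is converting the discrepancy bound of Lemma~\ref{estfrac} into a bound on $\sum_{a\in I_b^-}g(a,b)$ cleanly and uniformly in~$b$, in particular correctly treating the small-$b$ regime where the interval of summation is longer than the modulus~$b$ (so that it is a union of several complete residue classes plus a short incomplete piece), and making sure the boundary/rounding terms from the partial summation are also absorbed into $O(\tau(b))$ or $O(\tau(b)^{3/2}p^{1/4}(\log p)^2)$. Once that uniform per-$b$ bound is in hand, the final summation over~$b$ is a routine application of the moment estimate for $\tau(b)^{3/2}$.
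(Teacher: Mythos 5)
Your proposal is correct and follows essentially the same route as the paper: split $I_b$ as $I_b^+\setminus I_b^-$, bound the $I_b^+$ contribution by $O(\tau(b))$ via Lemma~\ref{I+}, convert the discrepancy bound of Lemma~\ref{estfrac} into a bound $\ll \tau(b)^{3/2}p^{1/4}(\log p)^2$ on the $I_b^-$ contribution (the paper does this with the layer-cake identity $\sum_a\{\cdot\}=\int_0^1 h_b(\alpha)\,d\alpha$, which is your Koksma-type step made explicit), and finish with $\sum_{b\le\sqrt p}\tau(b)^{3/2}\ll\sqrt p(\log p)^{2^{3/2}-1}$. The one complication you flag --- the interval for $I_b^-$ wrapping around the modulus for small $b$ --- never actually occurs, since every $b$ in the sum satisfies $b\ge(\lambda^{-1}-\lambda)\sqrt p$, which is exactly the length of the interval $(\lambda\sqrt p,\sqrt p/\lambda)$.
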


\begin{proof}
We begin by noting that equation~\eqref{T} and Definition~\ref{intervals def} imply
\begin{align}
\sum_{(a,b) \in T} \bigg(\bigg\{\frac{\lambda \sqrt{p}}{b}-\frac{p\overline{a}_b}{b}\bigg\} - \frac12 \bigg) &= \sum_{(\lambda^{-1}-\lambda)\sqrt{p} \le b \le \lambda\sqrt p} \sum_{a\in I_b} \bigg(\bigg\{\frac{\lambda \sqrt{p}}{b}-\frac{p\overline{a}_b}{b}\bigg\} - \frac12 \bigg) \nonumber \\
&= \sum_{(\lambda^{-1}-\lambda)\sqrt{p} \le b \le \lambda\sqrt p} \sum_{a\in I_b^+} \bigg(\bigg\{\frac{\lambda \sqrt{p}}{b}-\frac{p\overline{a}_b}{b}\bigg\} - \frac12 \bigg) \label{decomp} \\
&\qquad{}- \sum_{(\lambda^{-1}-\lambda)\sqrt{p} \le b \le \lambda\sqrt p} \sum_{a\in I_b^-} \bigg(\bigg\{\frac{\lambda \sqrt{p}}{b}-\frac{p\overline{a}_b}{b}\bigg\} - \frac12 \bigg). \nonumber
\end{align}
Note that $\{p\overline{a}_b \colon a \in I_b^+\}$ comprises a full set of distinct reduced residues modulo~$b$. Therefore by Lemma~\ref{I+},
\[ \sum_{a \in I_b^+} \left( \bigg\{\frac{\lambda \sqrt{p}}{b}-\frac{p\overline{a}_b}{b}\bigg\} - \frac{1}{2} \right)   \ll \tau(b), \]
from which it follows that
\begin{equation}\label{plussum}
\sum_{(\lambda^{-1}-\lambda)\sqrt{p} \le b \le \lambda\sqrt p} \sum_{a \in I_b^+} \left( \bigg\{\frac{\lambda \sqrt{p}}{b}-\frac{p\overline{a}_b}{b}\bigg\} - \frac{1}{2} \right) \ll \sum_{b \leq \sqrt{p}} \tau(b) \ll \sqrt{p}\log p.
\end{equation}

Turning to the last double sum in equation~\eqref{decomp}, we define the function $h_b \colon [0,1] \to \mathbb{R}$ by
\[ h_b(\alpha) = \#\bigg\{ a\in I_b^- \colon \bigg\{\frac{\lambda \sqrt{p}}{b}-\frac{p\overline{a}_b}{b}\bigg\} \geq \alpha \bigg\},\]
so that
\begin{equation}\label{hswitch} \sum_{a \in I_b^-}  \bigg\{\frac{\lambda \sqrt{p}}{b}-\frac{p\overline{a}_b}{b}\bigg\}  = \int_0^1 h_b(\alpha) \ d\alpha .\end{equation}
For all real numbers~$\beta$ and~$\gamma$, if $\{\beta \} \geq \{\gamma\}$ then $\{\beta - \gamma\} = \{\beta \}-\{\gamma\}$, while if $\{\beta\}<\{\gamma\}$ then $\{\beta - \gamma\} = \{\beta \}-\{\gamma\}+1$. Consequently, for all integers~$b$ and real numbers $\alpha \in [0,1)$,
\[
\bigg\{ a \in I_b^- \colon \Big\{ \beta - \frac{p\overline{a}_b}b \Big\} \geq \alpha \bigg\} = \begin{cases}
  \big\{a \in I_b^- \colon \{\beta\} < \{\frac{p\overline{a}_b}b\} \le \{\beta\} + 1 - \alpha \big\}, & \text{if } \{\beta\} \leq \alpha, \\
  \big\{a \in I_b^- \colon \{\frac{p\overline{a}_b}b\} \leq \{\beta \}-\alpha \text{ or } \{\frac{p\overline{a}_b}b\} > \{\beta\} \big\}, & \text{if } \{\beta\} > \alpha.
\end{cases}
\]
In either case, $\big\{ \beta - \frac{p\overline{a}_b}b \big\} \ge \alpha$ if and only if $\{\frac{p\overline{a}_b}b\}$ lies in an interval, or union of intervals, of total length $1-\alpha$.
It thus follows from Lemma~\ref{estfrac} that $h_b(\alpha) - (1-\alpha)|I_b^-| \ll \tau(b)^{3/2} p^{1/4} (\log p)^2$. Substituting into equation~\eqref{hswitch}, we obtain
\begin{align*} \sum_{a \in I_b^-} \left( \bigg\{\frac{\lambda \sqrt{p}}{b}-\frac{p\overline{a}_b}{b}\bigg\} - \frac{1}{2} \right) &= \int_0^1 \big( (1-\alpha)|I_b^-| + O(\tau(b)^{3/2} p^{1/4} (\log p)^2) \big) \,d\alpha - \sum_{a \in I_b^-} \frac12 \\
&= |I_b^-|\int_0^1 (1-\alpha) \ d\alpha - \frac{1}{2} |I_b^-| +O\big( \tau(b)^{3/2} p^{1/4} (\log p)^2\big) \\
& \ll \tau(b)^{3/2} p^{1/4} (\log p)^2, \end{align*}
whereupon
\begin{align*}
\sum_{(\lambda^{-1}-\lambda)\sqrt{p} \le b \le \lambda\sqrt p} \sum_{a \in I_b^-} \left( \bigg\{\frac{\lambda \sqrt{p}}{b}-\frac{p\overline{a}_b}{b}\bigg\} - \frac{1}{2} \right) &\ll  p^{1/4}(\log p)^2 \sum_{(\lambda^{-1}-\lambda)\sqrt{p} \le b \le \lambda\sqrt p}\tau(b)^{3/2} \\
&\le p^{1/4}(\log p)^2 \sum_{b\le \sqrt{p}} \tau(b)^{3/2} \\
&\ll p^{1/4}(\log p)^2 \cdot \sqrt p (\log p)^{2^{3/2}-1}
\end{align*}
from known bounds for sums of powers of $\tau(b)$ (see for example~\cite[equation~(2.31)]{MV}).
Inserting this estimate and the estimate~\eqref{plussum} into equation~\eqref{decomp} completes the proof of the proposition.
\end{proof}

\section*{Acknowledgments}
The authors thank J\'ozsef Solymosi for suggesting this project and pointing out Corollary~\ref{TVcor}, and Joshua Zahl for helpful discussions. The research of the first author was supported in part by a National Sciences and Engineering Research Council of Canada Discovery Grant. The research of the second author was supported in part by Killam and NSERC doctoral scholarships. The research of the third author was supported in part by a Four Year Doctoral Fellowship from the University of British Columbia.

\end{document}